\newcommand\q{\enquote}
\newtheorem{nnassumption}{\bf Assumption}
\newenvironment{assumption}{\begin{nnassumption}\it}{\end{nnassumption}}
\newtheorem{nntheorem}{\bf Theorem}
\newenvironment{theorem}{\begin{nntheorem}\it}{\end{nntheorem}}
\newtheorem{nncorollary}{\bf Corollary}
\newenvironment{corollary}{\begin{nncorollary}\it}{\end{nncorollary}}
\newtheorem{nndefinition}{\bf Definition}
\newenvironment{definition}{\begin{nndefinition}\it}{\end{nndefinition}}
\newtheorem{nnproposition}{\bf Proposition}
\newenvironment{proposition}{\begin{nnproposition}\it}{\end{nnproposition}}
\newtheorem{nnproblem}{\bf Problem}
\newtheorem{nnlemma}{\bf Lemma}
\newenvironment{lemma}{\begin{nnlemma}\it}{\end{nnlemma}}
\newtheorem{nnremark}{\bf Remark}
\newenvironment{remark}{\begin{nnremark} \rm }{\hfill \hspace*{1pt}\hfill $\circ$\end{nnremark}}
\newtheorem{nnexample}{\bf Example}
\newenvironment{proof}{{\bf Proof.}}{\hfill \hspace*{1pt}\hfill $\Box$}
\newcommand\sat{\textup{sat}}
\newcommand{\lel}{\left\langle}
\newcommand{\rir}{\right\rangle}
\newcommand{\scalp}[2]{ \lel #1, #2 \rir }
\newcommand \KL  {\mathcal{KL}}
\newcommand{\intt}{{\rm int}\,}
\def\R{\mathrm{I\kern-0.21emR}}
\def\N{\mathrm{I\kern-0.21emN}}
\renewcommand{\geq}{\geqslant}
\renewcommand{\leq}{\leqslant}
\begin{document}
\title{Local stabilization of an unstable parabolic equation via saturated controls}

\author{Andrii Mironchenko, Christophe Prieur and Fabian Wirth

\thanks{A. Mironchenko is with 
Faculty of Computer Science and Mathematics, University of Passau,
94030 Passau, Germany.
Email: andrii.mironchenko@uni-passau.de. Corresponding author.
}
\thanks{
C. Prieur is with Univ. Grenoble Alpes, CNRS, Grenoble INP, GIPSA-lab, 38000 Grenoble, France. Email: Christophe.Prieur@gipsa-lab.fr
}
\thanks{F. Wirth is with 
Faculty of Computer Science and Mathematics, University of Passau,
94030 Passau, Germany.
Email: fabian.(lastname)@uni-passau.de.
}
\thanks{
A. Mironchenko has been supported by DFG, grant Nr. MI 1886/2-1.
C. Prieur has been partially supported by MIAI @ Grenoble Alpes, (ANR-19-P3IA-0003). 
A preliminary version of this paper was presented at 
the 11-th IFAC Symposium on Nonlinear Control Systems (NOLCOS 2019), see \cite{MPW19a}.
}
}

\maketitle


\begin{abstract}
We derive a saturated feedback control, which locally stabilizes a linear reaction-diffusion equation.
In contrast to most other works on this topic, we do not assume the Lyapunov stability of the uncontrolled system and consider general unstable systems.
Using Lyapunov methods, we provide estimates for the region of attraction for the closed-loop system, given in terms of linear and bilinear matrix inequalities.
We show that our results can be used with distributed as well as scalar boundary control, and with different types of saturations.
The efficiency of the proposed method is demonstrated by means of numerical simulations.
\end{abstract}


\begin{IEEEkeywords}
PDE control, reaction-diffusion equation, saturated control, stabilization, attraction region.
\end{IEEEkeywords}

\section{Introduction}
\label{sec:Intro}


In applications of control technology, physical inputs (like force,
torque, thrust, stroke, etc.)  are often limited in size \cite{BeM95}. 
If such input limitations are neglected, this may result in undesirable
oscillations of the closed-loop system, lack of global stabilizability and
in a dramatic reduction of the region of attraction of the closed-loop system, see e.g. \cite{TGS11,zaccarian2011modern,SSS12}
for an introduction to the nonlinear behavior induced by input limitations.

This leads to the problem of (local or global) stabilization of control systems with inputs of a norm not exceeding a prescribed value.

In this paper, we study the stabilizability of a one-dimensional linear unstable reaction-diffusion equation (also called heat equation) using a saturated control. Many results exist in the literature for the control of this class of equations, using either bounded or unbounded control operators, with or without input delays. 
More specifically, in \cite{Krstic:SCL:2009} a backstepping approach is
applied to design a boundary delayed feedback control for a heat equation
(see \cite{KrsticSmyshlyaev:book:08} for further results using the same
design methods). In \cite{FridmanOrlov2009} a stable heat
partial differential equation (PDE) is controlled by means of a delayed
bounded linear control operator (see also \cite{solomon2015stability} for
the semilinear case). 
For the case of unbounded control operators, see \cite{nicaise:dcds:2009,PrT19} for the computation of delayed control to stabilize the reaction-diffusion equation. 

To analyze the effect of input saturations and to design saturated
controllers for infinite-dimensional systems, different results and
techniques are available. One of the first papers in this field is
\cite{Sle89}, where compact and bounded control operators are
considered. Another notable early reference is \cite{LaS03} where an
observability assumption is stated for the study of PDEs with constrained
controllers. 
In these results, the control inputs are functions from a certain input space, and the saturation map has to be
understood as a limitation on the norm of the input in this space. 
A more physically relevant type of saturation is given by pointwise saturations, which limit the values of the control input at each point by a prescribed value.

%
Pointwise saturations are more complex and require further developments, some of which we investigate in the present paper. Other types of saturation functions can  also be useful in practice, see \cite{MCP18} for a discussion. 

The definition of the saturation map used in the present paper is aligned
to that used in
\cite{PTS16} where Lyapunov methods are shown to be useful for the
stability analysis of wave equations subject to saturated inputs. See also
\cite{MCP18, MAP17}, where systems in Hilbert spaces with applications to the Korteweg--de Vries equation have been addressed. 



\emph{Our main result is the derivation of a saturated feedback control,
  which locally stabilizes the unstable heat equation, and for which
  estimates of the region of attraction for the closed-loop system can be
  provided. These estimates are formulated in terms of matrix inequalities which can be efficiently solved numerically.}
\emph{
We emphasize that, in contrast to the works \cite{Sle89,
    LaS03,PTS16}, where the stress is on the global stabilization of marginally stable hyperbolic systems (which have infinitely many eigenvalues on the imaginary axis), in our paper we consider parabolic systems, which have finitely many exponentially unstable modes. Therefore
  the control of the system with a saturating controller only provides local asymptotic stability, paralleling what is known for finite-dimensional systems (see in particular \cite{TeelNestedSaturation}).} 

We would like to mention the related research on model predictive control of parabolic PDEs under constraints on the state and the input  \cite{DEM06}, and on Lyapunov-based control of parabolic systems by controls of a bounded magnitude \cite{EAC03}.
Note however, that in these papers the problem of estimating the region of attraction has not been studied, which is the key objective of this work.
In \cite{KaF17} stabilizing backstepping-based
boundary controllers for coupled heat-ODE systems with time-varying state delays in the presence of actuator saturation are developed, and in \cite{KaF18} boundary stabilization of a nonlinear Schr\"odinger equation with state delay and bounded internal disturbance
is performed. Both in \cite{KaF17, KaF18} estimates of the region of attraction of the closed-loop system are provided.

Our approach is based on the spectral decomposition of the open-loop dynamics of a heat equation into the finite-dimensional unstable part and infinite-dimensional stable dynamics, which is a classical tool used in PDE control \cite{Rus78, BaT04, BLT06}.
To stabilize the unstable finite-dimensional part using a saturated control we apply techniques, which are well-known for ODE systems (see e.g.  \cite{TGS11,zaccarian2011modern,SSS12} for systems with non-delayed input, and \cite{LiF14} for systems with a delayed input).

Using Lyapunov functions, we derive in Section~\ref{sec:Estim_attr_region_infdim_sys} linear matrix inequalities
(LMIs) whose solutions provide estimates of the region of attraction
of the closed-loop finite-dimensional system, see \cite{BEF94,
  ScW00, VaB00} for an introduction to matrix inequalities. 
We note, that a different Lyapunov function has been used for local ISS stabilization of the diffusion equation by saturated controls in the paper \cite{TMP18}.
Then we show how asymptotic stability  and estimates for the region of attraction can
be obtained for the reaction-diffusion equation in closed-loop with the nonlinear saturated control. 

We show that our results can be extended in a variety of different directions.
In Section~\ref{sec:anti-windup} we demonstrate, how dynamic controllers can be used to enlarge the region of attraction. In Section~\ref{sec:Estim_attr_region_infdim_sys_b_times_u}
we show that more complex types of
saturation functions can be tackled, such as pointwise ($L_\infty$) saturation functions. 
\textit{Although our main results concern the stabilization via
  distributed controllers, in Section \ref{sec:Boundary_control} we show
  how the methods can be applied to unbounded scalar control operators}, designed as the output of a finite-dimensional boundary control plant, which is fed with the saturated input.


In this light, our approach seems to be useful for any infinite-dimensional
systems for which there exist only finitely many unstable modes and which
is to
be controlled through a bounded input operator (as is the case for systems with input delays considered in e.g. \cite{FlP86}), see also Remark~\ref{rem:Stabilization-with-a-prescribed-decay-rate}.

The remaining part of this paper is organized as follows. We first
introduce the reaction-diffusion equation with bounded input operator in
Section \ref{sec:Start}. The saturation function is defined and the spectral decomposition is provided. 
A saturated feedback is designed in Section
\ref{sec:Estim_attr_region_infdim_sys} and an estimate for the region of attraction is provided, first for the open-loop unstable part, and then for both stable and unstable part.  
Numerical experiments conducted in Section~\ref{sec:Numerics} illustrate our design method and the obtained estimates of the region of attraction depending on the saturation level.  
Other saturations are considered in Section
\ref{sec:Estim_attr_region_infdim_sys_b_times_u}. In Section
\ref{sec:Boundary_control}, we show how our results can be applied in the
case of
unbounded input operators, that is to saturating boundary controllers
resulting of a finite-dimensional dynamical system. Concluding remarks and
possible future lines of research are collected in Section \ref{sec:conclusion}.

%

\textbf{Notation:} The set of nonnegative reals we denote by $\R_+$. The Euclidean norm on $\R^n$ is denoted by $|\cdot|$,
the operator norm induced by this norm on spaces of matrices is denoted by
$\|\cdot\|$. The interior of a set $S$ in a topological space is denoted
$\intt S$ 
and $\overline{S}$ denotes its closure.
In any normed vector space, the ball of radius $r$ around $0$ is denoted by $B_r(0)$.
By $\N$ we denote the nonnegative integers. For convenience,
$\N^*:=\N\setminus\{0\}$.  For $k \in \N, L>0$, $H^{k}(0,L)$ denotes the
Sobolev space of functions from the space $L_2(0,L)$, which have weak
derivatives of order $\leq k$, all of which belong to
$L_2(0,L)$. $H^{k}_0(0,L)$ is the closure of $C_0^k(0,L)$ 
(the $k$-times
continuously differentiable functions with compact support in $(0,L)$)
 in
the norm of $H^{k}(0,L)$.

\section{Problem formulation}
\label{sec:Start}

We consider the stabilization problem of a one-dimensional linear reaction-diffusion equation by means of a
distributed control $u:\R_+\to\R^m$. Let $L>0$. We are given $m$ functions
$b_k: [0,L] \to \R$, $k=1,\ldots,m$ which describe at which places the control input $u_k\in\R$
is acting. The function $c$ models the place-dependent reaction rate. The
system model is then  
\begin{equation}\label{closed-loop:sat}
\begin{split}
& w_t(t,x)=w_{xx}(t,x)+c(x)w(t,x)\\
&\qquad\qquad\quad + \sum_{k=1}^m b_k(x) \sat(u_k(t)),  \ t>0, \; x \in (0,L), \\
& w(t,0)=w(t,L)=0, \quad t>0, \\
& w(0,x)=w^0(x) , \quad x \in (0,L).
\end{split}
\end{equation}
We assume that the state space of this system is $X:=L_2(0,L)$ and that
$c,b_k \in X$, $k=1,\ldots,m$.

Here $\sat$ is a component-wise saturation function, that is, for all $k=1,\ldots, m$ and for any $v \in\R^m$,
\begin{equation}
\label{sat:finite:dim}
\sat(v)_k
:=
\begin{cases}
v_k & \text{ if }  |v_k|\leq \ell, \\ 
\frac{\ell}{|v_k|}v_k & \text{ if } | v_k |\geq \ell,
\end{cases}
\end{equation}
where $\ell>0$ is the given level of the saturation, which is assumed to be uniform with respect to the index $k$.

\begin{remark}
\label{rem:Saturations_in_boundary_control} 
Systems of the form \eqref{closed-loop:sat} also occur in the problem of stabilizing the linear heat equation by means of boundary control subject to delays or saturation, see e.g. \cite{PrT19} as well as Section~\ref{sec:Boundary_control} below.
\end{remark}

For the design of a stabilizing feedback, we use the well-known eigenfunction decomposition method.
Define 
\begin{equation}
A=\partial_{xx}+c(\cdot)\mathrm{id}: X \to X
\end{equation}
with domain $D(A)=H^2(0,L)\cap H^1_0(0,L)$. 
Then the control system \eqref{closed-loop:sat} takes the form
\begin{equation}
\label{newzero}
w_t(t,\cdot)=Aw(t,\cdot)+ \sum_{k=1}^m  b_k \sat(u_k(t)).
\end{equation}

We note that $A$ is selfadjoint and has compact resolvent, see Appendix~\ref{appendix}. 
Hence, the spectrum of $A$ consists of only isolated eigenvalues with finite multiplicity, 
see \cite[Theorem~III.6.29]{Kat95}. 
Furthermore, there exists a Hilbert basis $(e_j)_{j\geq 1}$ of $X$ consisting of eigenfunctions of $A$, associated with the sequence of  eigenvalues $(\lambda_j)_{j\geq 1}$. Note that
\begin{equation*}
-\infty<\cdots<\lambda_j<\cdots<\lambda_1\quad \textrm{and}\quad \lambda_j\underset{j\rightarrow +\infty}{\longrightarrow}-\infty     
\end{equation*}
and that $e_j(\cdot)\in D(A)$ for every $j\geq 1$.

We consider (mild) solutions of the system \eqref{closed-loop:sat} (see \cite[Section 3.1]{CuZ95}), which exist and are unique for any initial condition in $X$ and for any $u_k \in L_{1,loc}([0,\infty))$, for $k=1,\ldots,m$.

Every solution $w(t,\cdot)\in D(A)$ of \eqref{newzero} can be
expanded as a series in the eigenfunctions $e_j(\cdot)$, convergent in
$H_0^1(0,L)$,
\begin{equation}
    \begin{aligned}
w(t,\cdot)&=\sum_{j=1}^{\infty}w_j(t)e_j(\cdot),\\
   w_j(t)&:=\langle w(t,\cdot),e_j(\cdot)\rangle_{L_2(0,L)},\ j\in\N^*.
    \end{aligned}
\label{eq:Orthogonal_decomposition_w}
\end{equation}
Analogously, we can expand the coefficients $b_k$ in the series 
\begin{equation*}
b_k(\cdot) = \sum_{j=1}^{\infty}b_{jk} e_j(\cdot), \quad  b_{jk}=\langle b_k(\cdot),e_j(\cdot)\rangle_{L_2(0,L)},\ j \in \N^*.
\label{eq:Orthogonal_decomposition_b}
\end{equation*}
As discussed in Appendix~\ref{appendix2}, \eqref{newzero} is
equivalent to the infinite-dimensional control system
\begin{align}
\label{sys-dim-infinie}
\dot{w}_j(t) &= \lambda_jw_j(t)+ \sum_{k=1}^m  b_{jk}\sat(u_k(t)) \nonumber\\
&=   \lambda_jw_j(t)+  \mathbf{b}_{j} \cdot \sat(u(t)) ,\qquad
j\in\N^*,
\end{align}
where \q{$\cdot$} is the scalar product in $\R^m$, $\sat(u(t)) \in \R^m$ is the vector with entries $\sat(u_k(t))$
and $\mathbf{b}_{j}$ is the row vector with entries $b_{jk}$, $k=1,\ldots,m$.

Let $n\in\N^*$ be the number of nonnegative eigenvalues of $A$
and let $\eta>0$ be such that
\begin{equation}\label{refeta}
\forall j>n:\quad \lambda_j<-\eta<0.
\end{equation}

With the matrix notations
\begin{equation}
\label{eq:def:A1}
z{:=}\hspace{-1mm}\begin{pmatrix} w_1 \\ \vdots \\ w_n
\end{pmatrix}\! ,
\mathbf{A}{:=}\hspace{-1mm}\begin{pmatrix}
 \lambda_1 &\! \cdots \! &    0           \\
 \vdots         & \!\ddots \! &   \vdots       \\
  0              &\! \cdots \!& \lambda_n
\end{pmatrix}\! , 
\mathbf{B}{:=}\hspace{-1mm}\begin{pmatrix}  b_{11} &\!\! \cdots \!\! & b_{1m} \\ \vdots &&
    \vdots\\ b_{n1} &\!\! \cdots\!\! & b_{nm}  \end{pmatrix} 
\end{equation}
the $n$ first equations of 
\eqref{sys-dim-infinie}
form the unstable finite-dimensional control system 
\begin{equation}\label{systfini}
\dot{z}(t)=\mathbf{A} z(t) + \mathbf{B}\sat(u(t)).
\end{equation}

\begin{remark}
\label{rem:Stabilization-with-a-prescribed-decay-rate} 
 \textbf{(Possible extensions of our results).}

(i) The same method can be used for stabilization with a prescribed
decay rate $-\beta$. Just define for a suitable $\eta >
\beta$ the minimal $n$ such that \eqref{refeta} holds, and then use the
method described here. 

(ii) Since the input operator is bounded and the input space is
finite-dimensional, the system can only be stabilized if the unstable
spectrum consists of finitely many eigenvalues (counting multiplicities),
see \cite[Theorem 1.1]{JaZ99} and the discussion in that paper relating to the
history of the result. If this restriction is satisfied (e.g., this is the case for systems with input delays considered in \cite{FlP86}), the methods of this paper apply.
\end{remark}

\section{Estimation of the region of attraction for saturated inputs}
\label{sec:Estim_attr_region_infdim_sys}

\subsection{Decomposition of the system into stable and unstable part}

We now introduce a decomposition of the state space into a finite-dimensional space on which the stabilization problem has to be solved and
its orthogonal complement, which is invariant under the free dynamics.

Let $X_n$ be the subspace of $L_2(0,L)$ spanned by
$(e_i(\cdot))_{i=1}^n$ and $\pi_n$ be the orthogonal projection onto $X_n$, that is
\begin{equation}
\label{newun}
\pi_n w(t,\cdot):=\sum_{j=1}^nw_j(t)e_j(\cdot).
\end{equation}
We define also $X_n^{\bot}$ as the orthogonal complement
of $X_n$ in $X$.
Let $\iota: \R^n \rightarrow X_n$ be the isomorphism defined by
$\iota(e^j)=e_j(\cdot)$, where $(e^j)_{j=1,\ldots, n}$ is the canonical
basis of $\R^n$. 
We will use the isometric representation of
$L_2(0,L)$ as $\ell_2(\N^*, \R)$ obtained by the isomorphism
induced by $e_j(\cdot)\mapsto e^j$, where
\[
\ell_2(\N^*, \R):=\Big\{(x_k)_{k\in\N^*}\in\R^{\N^* }\ : \ \sum_{k=1}^\infty|x_k|^2 <\infty\Big\},
\]
where $e^j, j\in \N^*$ are the standard basis vectors in $\ell_2(\N^*,
\R)$ and we use the standard norm on that space. 
Corresponding to the decomposition
$L_2(0,L) = X_n \bigoplus X_n^{\bot}$, where \q{$\bigoplus$} is the orthogonal sum of subspaces, we denote 
$\ell_2(\N^*, \R) = \R^n \bigoplus \ell_{2,j>n}$, where we identify $\R^n$ with the sequences with
support in $\{1, \ldots,n \}$ and $\ell_{2,j>n}$ is the set of sequences
in $\ell_2(\N^*, \R)$ which are $0$ in the first $n$ entries.

Given a linear map $K:X_n \to \R^m$, consider the feedback 
\begin{align}
u=&K \pi_n{w}(\cdot) = K \Big( \sum_{j=1}^nw_je_j(\cdot) \Big) 
=  \sum_{j=1}^n w_jKe_j(\cdot) \nonumber\\
=&  \sum_{j=1}^n w_j\mathbf{K}_j =  \mathbf{K} z,
\label{eq:Stabilizing_controller}
\end{align}
where $\mathbf{K}_j:=Ke_j(\cdot) \in \R^m$, $j=1,\ldots,n$, and where we
use 
the notation from (\ref{eq:def:A1}) in the final step and set 
\begin{center}
$\mathbf{K}:=(\mathbf{K}_1,\ldots, \mathbf{K}_n)\in \R^{m\times n}$.
\end{center}

Hence the system \eqref{sys-dim-infinie} with the feedback \eqref{eq:Stabilizing_controller} is equivalent to the following set of differential equations:
\begin{eqnarray}
\dot{w}_j(t)= \lambda_j w_j(t) + \mathbf {b}_{j}\cdot
\sat(\mathbf{K}z(t)),\quad j \in \N^*.
\label{eq:Componentwise_equations_with_saturation_1_FW}
\end{eqnarray}
Using \eqref{eq:def:A1}, we rewrite the first $n$ equations of \eqref{eq:Componentwise_equations_with_saturation_1_FW} as
\begin{eqnarray}
\dot{z}(t) = \mathbf{A}z(t) + \mathbf{B} \sat(\mathbf{K}z(t)).
\label{eq:Systfini_saturated_FW}
\end{eqnarray}
Now, \eqref{eq:Componentwise_equations_with_saturation_1_FW} can be considered as a cascade interconnection of an $n$-dimensional part, described by the equations
\eqref{eq:Systfini_saturated_FW} and of an infinite-dimensional part described by the equations
\begin{eqnarray}
\dot{w}_j(t)&=& \lambda_j w_j(t) + \mathbf {b}_{j} \cdot \sat(\mathbf{K}z(t)),\quad j \geq n+1.
\label{eq:Componentwise_equations_with_saturation_infdimpart}
\end{eqnarray}

Our general assumption is:
\begin{assumption}
\label{ass:Stabilizability} 
The pair $(\mathbf{A},\mathbf{B})$ is stabilizable, i.e. there is $\mathbf{K}\in \R^{m\times n}$:
$\mathbf{A}+\mathbf{B}\mathbf{K}$ is Hurwitz.
\end{assumption}

\begin{remark}
\label{rem:Saturated_Stabilizability_LinearStabilizability} 
Clearly, the closed-loop system \eqref{eq:Systfini_saturated_FW} with a matrix $\mathbf{K}$ as in Assumption~\ref{ass:Stabilizability} 
is locally asymptotically stable. 

Conversely, if a feedback $\mathbf{K}$ renders the closed-loop system \eqref{eq:Systfini_saturated_FW}
locally asymptotically stable, then also
\begin{eqnarray}
\dot z= \mathbf{A}z + \mathbf{B} u
\label{eq:Linear_fin_dim_sys_no_sat}
\end{eqnarray}
is locally and hence globally asymptotically stabilized by means of the feedback $u(t):=\mathbf{K} z(t)$. Thus, local asymptotic stability of 
\eqref{eq:Systfini_saturated_FW} implies that the pair $(\mathbf{A},\mathbf{B})$ is stabilizable.

We note that in the case $m=1$ the situation simplifies further 
as then \eqref{eq:Linear_fin_dim_sys_no_sat} is a linear diagonal system
with scalar control input. The criterion for stabilizability is then that
$b_{j1} \neq 0$ for all $j=1,\ldots,n$ and $\lambda_k \neq \lambda_j$ for all $k,j =1,\ldots,n$, $k\neq j$ (which is an easy exercise).
That is, the localization function
$b_1$ should not be orthogonal to an unstable eigenfunction and that all
unstable eigenvalues need to be simple.
\end{remark}

Next we show that the problem of exponential stabilization of the overall system \eqref{sys-dim-infinie}
boils down to the exponential stabilization of the finite-dimensional unstable system \eqref{systfini}. 
This latter problem will be elaborated in Section~\ref{sec:Finite-dim_Saturated_inputs}.

\begin{definition}
\label{def:Region_of_Attraction} 
Assume that $\mathbf{K}$ is chosen so that $0$ is a locally asymptotically
stable fixed point of \eqref{eq:Systfini_saturated_FW}.  We say that $S$
is a \emph{region of attraction} of $0$ if
\begin{itemize}
	\item[(i)]  $0 \in \intt S$; 
	\item[(ii)] for any
$z_0 \in S$ the corresponding solution of \eqref{eq:Systfini_saturated_FW}
satisfies $z(t;z_0) \to 0$ as $t\to\infty$;
	\item[(iii)] $S$ is forward invariant, i.e. for any $z_0\in S$ it holds that
$z(t;z_0) \in S$ for all $t\geq0$.
\end{itemize}
The largest set (with respect to set inclusion) with the properties (i)-(iii) is
called the \emph{maximal region of attraction}.
\end{definition}

As unions of regions of attraction are again a region of attraction, it is
immediate that the maximal region of attraction is uniquely defined in
this way and coincides with what is called domain of attraction in \cite{Hah67}.

\begin{definition}
\label{def:LocExpStab_with_DomAttr_S} 
\eqref{eq:Systfini_saturated_FW} is called \emph{locally exponentially stable in 0 with region of attraction $S$}, if the following conditions hold:
\begin{itemize}
	\item[(i)] there exist $\varepsilon, M, a >0$ such that for any initial condition satisfying $|z_0|\leq \varepsilon$, it holds
\begin{eqnarray}
|z(t;z_0)|\leq Me^{-at} |z_0| \quad \forall t\geq 0
\label{eq:exp-decay}
\end{eqnarray}
	\item[(ii)]
$\overline{B_\varepsilon(0)} \subset S$ and $S$ is a region of attraction of \eqref{eq:Systfini_saturated_FW}.
\end{itemize}
We call \eqref{eq:Systfini_saturated_FW} \emph{globally exponentially stable in 0} if  \eqref{eq:exp-decay} holds for some $a,M>0$ and all $z_0 \in\R^n$.
\end{definition}
Definitions~\ref{def:Region_of_Attraction} and \ref{def:LocExpStab_with_DomAttr_S} can be stated analogously for system \eqref{eq:Componentwise_equations_with_saturation_1_FW}.

We note that if the maximal region of attraction is not $\R^n$, then the system
cannot be exponentially stable on the maximal region of attraction (for
Lipschitz continuous systems), see \cite{Hah67}. Thus by analyzing regions of
attraction with exponential stability we necessarily restrict the region
of attraction.

%
\begin{proposition}
\label{prop:Attraction_region_finite_dim_and_infinite_dim} 
Assume $\mathbf{K}$ is chosen such that the subsystem \eqref{eq:Systfini_saturated_FW} is locally
exponentially stable in $0$ with region of attraction $S \subset \R^n$.
Then:
\begin{itemize}
	\item[(i)] system
          \eqref{eq:Componentwise_equations_with_saturation_1_FW} is
          locally exponentially stable in $0$ with region of attraction
$S \times \ell_{2,j>n}$.
	\item[(ii)] system \eqref{closed-loop:sat} with the feedback
          \eqref{eq:Stabilizing_controller} is locally exponentially
          stable in $0$ with region of attraction $\iota(S) \times X_n^{\bot}$.
\end{itemize}

  In addition, for any closed and bounded set $G\subset \intt(\iota(S) \times X_n^{\bot})$, there exist two positive values $M$ and $a$ such that for any initial condition $w(0,\cdot)$ in $G$, the solution $w(\cdot)$ to 
	\eqref{closed-loop:sat} with the controller \eqref{eq:Stabilizing_controller} satisfies
\begin{equation}\label{4:juin}
\|w(t,\cdot)\|_X \leq Me^{-at} \|w(0,\cdot)\|_X \quad \forall t\geq 0.
\end{equation} 
\end{proposition}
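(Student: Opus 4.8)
The plan is to exploit the cascade structure already exhibited in \eqref{eq:Componentwise_equations_with_saturation_1_FW}: the finite block \eqref{eq:Systfini_saturated_FW} evolves autonomously in $z$, while the tail modes $(w_j)_{j>n}$ in \eqref{eq:Componentwise_equations_with_saturation_infdimpart} form a stable linear system forced only through the bounded signal $\sat(\mathbf{K}z(t))$. Throughout I would work in the isometric copy $\ell_2(\N^*,\R)=\R^n\oplus\ell_{2,j>n}$ and write a state as $(z,\xi)$; since $e_j(\cdot)\mapsto e^j$ is an isometry, every stability statement and every decay rate transfers verbatim between \eqref{closed-loop:sat}--\eqref{eq:Stabilizing_controller} and \eqref{eq:Componentwise_equations_with_saturation_1_FW}, so (ii) is just (i) read through this isomorphism, with $S\times\ell_{2,j>n}$ corresponding to $\iota(S)\times X_n^\bot$. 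The whole argument therefore reduces to (i) plus one uniformity statement.

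For (i) I would first record two facts about the tail. The diagonal semigroup $(T(t)\xi)_j=e^{\lambda_j t}\xi_j$ on $\ell_{2,j>n}$ satisfies $\|T(t)\|\leq e^{-\eta t}$ by \eqref{refeta}, and the input map $v\mapsto(\mathbf{b}_j\cdot v)_{j>n}$ is bounded because $\sum_{j>n}|\mathbf{b}_j|^2\leq\sum_{k=1}^m\|b_k\|_X^2<\infty$ (as $b_k\in X$). Writing the tail by variation of constants and using $|\sat(v)|\leq|v|$, hence $|\sat(\mathbf{K}z)|\leq\|\mathbf{K}\|\,|z|$, I obtain
\[
\|\xi(t)\|\leq e^{-\eta t}\|\xi(0)\|+c\int_0^t e^{-\eta(t-s)}|z(s)|\,ds
\]
for a constant $c$. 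For $|z(0)|\leq\varepsilon$ the hypothesis gives $|z(t)|\leq Me^{-at}|z(0)|$, and the convolution of the two exponentials decays at rate $\min(\eta,a)$; combining with the bound on $z$ yields $\|(z,\xi)(t)\|\leq M'e^{-a't}\|(z(0),\xi(0))\|$, the required local exponential estimate. For the region-of-attraction claim I would take any $(z_0,\xi_0)\in S\times\ell_{2,j>n}$: forward invariance is inherited from that of $S$ (the tail coordinates are unconstrained), and convergence follows because $z(t)\to0$ forces $\sat(\mathbf{K}z(t))\to0$ while staying bounded, so the same convolution estimate (splitting the time integral) gives $\xi(t)\to0$. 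Since $0\in\intt S$ we also have $0\in\intt(S\times\ell_{2,j>n})$.

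For the final uniform estimate I would pass to $\ell_2$, where $G$ becomes a closed bounded set $\tilde G\subset\intt(S)\times\ell_{2,j>n}$, and split again into $z$-- and $\xi$--parts. The $z$--dynamics depend only on the projection of the data onto the first $n$ coordinates, a bounded subset of $\intt S$; on a compact subset of the region of attraction of an exponentially stable equilibrium the attraction is uniform and exponential, so $|z(t)|\leq M_1e^{-a_1 t}$ uniformly over this projection. Feeding this uniformly decaying, uniformly bounded signal into the tail estimate above, together with $\|\xi(0)\|\leq R$ on the bounded set $G$, gives a uniform bound $\|\xi(t)\|\leq M_2e^{-a_2t}$. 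The factor $\|w(0,\cdot)\|_X$ on the right of \eqref{4:juin} is then recovered by treating separately a small ball around $0$ (where the local estimate of (ii) already has this form) and the complement of that ball inside $G$ (where $\|w(0,\cdot)\|_X$ is bounded below).

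I expect the one genuinely delicate step to be this last uniformity. In infinite dimensions $G$ is closed and bounded but \emph{not} compact, so one cannot argue directly; the point is that the obstruction lives entirely in the finite-dimensional block, where the projection of $G$ is precompact, and that the interior hypothesis $G\subset\intt(\iota(S)\times X_n^\bot)$ is what keeps this projection inside the basin so that the finite-dimensional uniform-attraction lemma applies. Making the transition from \q{closed and bounded} to a usable compactness --- and in particular ensuring the projection does not reach $\partial S$ --- is where I would have to be careful; the tail contributes no difficulty beyond the elementary convolution estimate already used.
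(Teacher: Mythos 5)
Your proposal follows the paper's own proof almost step for step: the same cascade splitting into the finite-dimensional block \eqref{eq:Systfini_saturated_FW} and the exponentially stable tail \eqref{eq:Componentwise_equations_with_saturation_infdimpart}, the same variation-of-constants/convolution estimate for the tail, and the same compactness argument on the finite-dimensional block for the uniform estimate \eqref{4:juin}. The differences in the body are minor and in your favor: you estimate the tail directly in the $\ell_{2}$-norm using boundedness of the input map $v\mapsto(\mathbf{b}_j\cdot v)_{j>n}$, where the paper argues componentwise and then sums squares (cosmetic); and your split-of-the-integral argument showing $\xi(t)\to 0$ for \emph{every} $(z_0,\xi_0)\in S\times\ell_{2,j>n}$ fills a point the paper glosses over, since its compactness argument only covers compact subsets of $\intt S$ and never explicitly verifies attraction and forward invariance of the whole product set. (One small slip: the convolution of $e^{-\eta t}$ and $e^{-at}$ decays at rate $\min(\eta,a)$ only when $\eta\neq a$; for $\eta=a$ one gets $te^{-\eta t}$ and must concede an arbitrarily small loss of rate, a case the paper treats separately.)

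The genuine issue is the final uniformity step, which you explicitly leave open (``ensuring the projection does not reach $\partial S$ \ldots is where I would have to be careful''). Your caution is warranted, because the step cannot be closed as stated: in an infinite-dimensional space a closed bounded $G$ is not compact, and the projection $\iota^{-1}\circ\pi_n(G)$, while bounded, need not be closed, so its closure can reach $\partial S$ even though $G\subset\intt\bigl(\iota(S)\times X_n^\bot\bigr)$. Concretely, take $n=1$ with scalar closed-loop dynamics $\dot z=z+\sat(-2z)$, $\ell=1$, whose maximal basin is the open interval $S=(-1,1)$ with equilibria at $\pm 1$, and let $G=\bigl\{(1-1/k)\,e_1+e_{k+1}:k\geq 2\bigr\}$; this $G$ is closed, bounded and contained in $\intt\bigl(\iota(S)\times X_1^\bot\bigr)$, yet trajectories started in $G$ linger near $z=1$ for arbitrarily long times, so no uniform $M,a$ in \eqref{4:juin} exist. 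Notably, the paper's own proof stumbles at exactly the point you flagged: it asserts that $G'=\iota^{-1}\circ\pi_n(G)$ ``is a compact subset of $\intt S$'', which is false in general. The statement and both proofs become correct under the additional assumption $\overline{\pi_n(G)}\subset\iota(S)$ --- in particular whenever $S$ is closed, as is the case for the ellipsoids ${\cal A}=\{z:z^\top Pz\leq 1\}$ produced by Proposition~\ref{prop:Stabilization_FiniteDim_Sys_with_Saturation} and used in Theorem~\ref{theo:satstab-infdim} --- since then $\overline{\iota^{-1}\circ\pi_n(G)}$ is a compact subset of $S$, every point of which is attracted, and the standard covering argument yields uniform constants. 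So: same route as the paper, more careful in the middle, and the one step you declined to claim is precisely the step that is actually broken in the published proof.
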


\begin{proof}
Pick a compact subset $G'$ of $\intt S$. Since we assume that
\eqref{eq:Systfini_saturated_FW} is locally exponentially stable in 0 with
region of attraction $S \subset \R^n$, it follows from a standard
compactness argument that there exist $M,a>0$ so that for any $z_0\in G'$ the solution $z(\cdot;z_0)$ to \eqref{eq:Systfini_saturated_FW} satisfies
\[
|z(t;z_0)|\leq Me^{-at} |z_0|, \quad t\geq 0.
\]

From equations \eqref{sys-dim-infinie} and
\eqref{eq:Stabilizing_controller}, we derive that for $j=n+1,\ldots,
\infty$, for any $t\geq 0$ and for any $(w_{n+1}(0),w_{n+2}(0),\ldots) \in
\ell_{2,j>n}$ it holds that
\[
w_j(t) = e^{\lambda_j t} w_j(0) +  \mathbf {b}_{j} \cdot \int_0^t e^{\lambda_j (t-s)} \sat(\mathbf{K}z(s)) ds.
\]
From \eqref{sat:finite:dim} it follows that for all $z \in \R^n$ we have
\[
|\sat(\mathbf{K}z)| \leq |\mathbf{K}z| \leq \|\mathbf{K}\| |z|.
\]
Also due to the Cauchy-Bunyakovsky-Schwarz inequality we have that, for all $j\in\N$,
\begin{eqnarray}
|b_{jk}|=\Big|\langle b_k(\cdot),e_j(\cdot)\rangle_{X}\Big| \leq \|b_k\|_X \|e_j\|_X = \|b_k\|_X.
\label{eq:b_j_estimate}
\end{eqnarray}
Thus, for all $j\geq n+1$,
we obtain exploiting \eqref{eq:b_j_estimate} that
\begin{align*}
|w_j(t)| 
				 \leq&  e^{-\eta t} |w_j(0)| + |\mathbf{b}_j| \int_0^t e^{ - \eta (t-s)} |\mathbf{K}z(s)| ds\\
				 \leq&  e^{-\eta t} |w_j(0)| + |\mathbf{b}_j|\|\mathbf{K}\| \int_0^t e^{ - \eta (t-s)} Me^{-as} |z(0)| ds\\
				 =&  e^{-\eta t} |w_j(0)| +  |\mathbf{b}_j| \frac{M\|\mathbf{K}\|}{\eta - a} (e^{-at} - e^{-\eta t}) |z(0)|.
\end{align*}
The above computations have been performed for the case when $\eta \neq a$. If $a= \eta$, then
it holds that
\begin{eqnarray*}
|w_j(t)| &\leq&  e^{-\eta t} |w_j(0)| + M  |\mathbf{b}_j|\|\mathbf{K}\| t e^{-\eta t}|z(0)|.
\end{eqnarray*}

Now, using the inequality $(a+b)^2\leq 2(a^2 + b^2)$ for any $(a,b)\in
\R^2$, and the square summability of $|w_j(0)|$ and $|b_{jk}|$, $k=1,\ldots,m$, it follows that $\sum_{j=n+1}^\infty |w_j(t)|^2$ decays exponentially as well. 

We now obtain local exponential stability of \eqref{eq:Componentwise_equations_with_saturation_1_FW} by choosing $G'$ such that $0\in\R^n$ is in the interior of $G'$ and noting that then $0\in X$ is in the interior of $\iota(G')\times X_n^{\bot}$.

For the final statement of the proposition, pick a closed and bounded set $G\subset \intt(\iota(S) \times X_n^{\bot})$. Select $G'=\iota^{-1}\circ \pi_n (G)$, then $G'$ is a compact subset of $\intt S$, the previous computations yield
(\ref{4:juin})
for suitable constants $M$ and $a$ and for the superset $\iota(G')\times X_n^{\bot}$ which contains $G$.
\end{proof}

\begin{remark}
\label{rem:ISS_cascade_interconnection} 
It is not hard to see that
\eqref{eq:Componentwise_equations_with_saturation_infdimpart} is
input-to-state stable (ISS) with respect to the input $z$. Hence
\eqref{eq:Componentwise_equations_with_saturation_1_FW} is asymptotically
stable as a cascade interconnection of a locally asymptotically stable
system and an ISS system. However, since the general theorem on cascade
interconnections does not guarantee exponential convergence, we needed an
extra argument for Proposition~\ref{prop:Attraction_region_finite_dim_and_infinite_dim}.
\end{remark}


\begin{remark}
\label{rem:LISS_wrt_actuator_disturbances} 
Our feedback controller is robust w.r.t. additive actuator disturbances. 
Let the control input to system \eqref{closed-loop:sat} be $u(t):=\mathbf{K}z(t) + d(t)$, where 
$d \in PC(\R_+,\R^m)$ is a piecewise continuous actuator disturbance.
Then there are $r>0$,
$M_1,a_1,\gamma_1 >0$ so that for all $w(0,\cdot) \in B_r$ and all $d \in PC(\R_+,\R^m)$ with $ \sup_{s\geq 0}|d(s)|<r$ the solution of 
\begin{equation}\label{closed-loop:sat_with_disturbance}
\begin{split}
& w_t(t,x)=w_{xx}(t,x)+c(x)w(t,x)\\
&+ \sum_{k=1}^m b_k(x) \sat\big((\mathbf{K}z(t))_k+d_k(t)\big), \  t>0, \; x \in (0,L), \\
& w(t,0)=w(t,L)=0, \quad t>0, \\
& w(0,x)=w^0(x) , \quad x \in (0,L).
\end{split}
\end{equation}
satisfies the local input-to-state stability (LISS) estimate
\begin{equation}\label{4:juin2}
\|w(t,\cdot)\|_X \leq M_2e^{-a_2t} \|w(0,\cdot)\|_X + \gamma_2 \sup_{s\geq 0}|d(s)|.
\end{equation} 
This property can be obtained quite easily, since for small enough $w$ and $d$ we have 
$\sat(\mathbf{K}z(t)+d(t)) = \mathbf{K}z(t)+d(t)$, and thus
\eqref{closed-loop:sat_with_disturbance} is a linear system with a bounded
disturbance operator $d \mapsto \sum_{k=1}^m b_k(\cdot)d_k$, acting on $PC(\R_+,\R^m)$, and \eqref{4:juin2} follows as exponential stability of
\eqref{closed-loop:sat_with_disturbance} without disturbances implies LISS for a system \eqref{closed-loop:sat_with_disturbance}
with disturbances.
It is, however, much harder to estimate a region of attraction of system \eqref{closed-loop:sat_with_disturbance},
as in addition to the complexities arising in the undisturbed case the interplay between 
the size of a region of attraction and the maximal norm of a disturbance has to be analyzed. This can be an interesting topic for a future research.
For more on ISS theory of infinite-dimensional systems the reader may consult \cite{MiP19, MiW18b,TPT17} and references therein.
\end{remark}

%

\subsection{Estimate of the region of attraction for the finite-dimensional part}
\label{sec:Finite-dim_Saturated_inputs}

%
%



In view of Proposition~\ref{prop:Attraction_region_finite_dim_and_infinite_dim}, it is important to study the local exponential stability and to estimate the region of attraction of the finite-dimensional system \eqref{eq:Systfini_saturated_FW}.
We perform this task in this section.

Defining the \emph{deadzone nonlinearity} $\phi:\R^m\to\R^m$ by 
\[
\phi(u)= \sat (u)- u,\quad u\in\R^m,
\]
where $\sat$ is defined in \eqref{sat:finite:dim}, the following \emph{generalized sector condition} holds (see \cite[Lemma 1.6, Page 45]{TGS11} for a proof):
\begin{lemma}
\label{lem:generalized sector condition} 
If for some $\mathbf{C}\in\mathbb{R}^{m\times n}$,  $z \in
\mathbb{R}^n$ and \linebreak $j\in \{ 1,\ldots, m \} $ it holds that
$|((\mathbf{K}-\mathbf{C}) z)_j| \leq \ell$, then
\begin{equation*}
 \phi_j( \mathbf{K}z)  (\phi_j(\mathbf{K}z)+(\mathbf{C} z) _j)\leq 0.   
\end{equation*}
\end{lemma}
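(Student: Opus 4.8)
The plan is to observe that since $\sat$ acts componentwise, the asserted inequality involves only the $j$-th coordinates, so the whole statement reduces to a scalar fact, which I then dispatch by a short case analysis on the location of $(\mathbf{K}z)_j$ relative to the band $[-\ell,\ell]$.

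First I would fix $j$ and introduce the scalars $u_j := (\mathbf{K}z)_j$ and $c_j := (\mathbf{C}z)_j$. Because the saturation in \eqref{sat:finite:dim} acts componentwise, $\phi_j(\mathbf{K}z) = \sat(\mathbf{K}z)_j - u_j$ depends on $\mathbf{K}z$ only through $u_j$. The hypothesis $|((\mathbf{K}-\mathbf{C})z)_j| \le \ell$ then becomes the scalar bound $|u_j - c_j| \le \ell$, and writing $p := \phi_j(\mathbf{K}z)$ the goal reduces to the single inequality $p(p + c_j) \le 0$.

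The core is then a three-way case distinction driven by \eqref{sat:finite:dim}. If $|u_j| \le \ell$, then $\sat(\mathbf{K}z)_j = u_j$, so $p = 0$ and the inequality holds trivially. If $u_j > \ell$, then $\sat(\mathbf{K}z)_j = \ell$, so $p = \ell - u_j < 0$; here I would use the hypothesis in the form $u_j - c_j \le \ell$ to conclude $p + c_j = \ell - (u_j - c_j) \ge 0$, whence the product of the nonpositive factor $p$ and the nonnegative factor $p+c_j$ is $\le 0$. The case $u_j < -\ell$ is symmetric: then $p = -\ell - u_j > 0$, and the other half of the hypothesis, $u_j - c_j \ge -\ell$, gives $p + c_j = -\bigl(\ell + (u_j - c_j)\bigr) \le 0$, again making the product nonpositive.

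There is no genuine obstacle here; the argument is elementary. The only point needing a little care is matching the correct half of the two-sided bound $|u_j - c_j| \le \ell$ to the sign of $p$ in each saturated case — using $u_j - c_j \le \ell$ when $p < 0$ and $u_j - c_j \ge -\ell$ when $p > 0$ — so that the two factors emerge with opposite signs. I would present the $u_j > \ell$ and $u_j < -\ell$ cases side by side to expose the symmetry and keep the sign bookkeeping transparent.
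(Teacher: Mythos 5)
Your proof is correct. Note that the paper does not prove this lemma itself: it defers to \cite[Lemma 1.6, Page 45]{TGS11}, and the standard argument given there is exactly the kind of componentwise case analysis you carry out. Your reduction to the scalar quantities $u_j = (\mathbf{K}z)_j$, $c_j = (\mathbf{C}z)_j$ is legitimate because $\sat$ (hence $\phi$) acts componentwise, and your three cases are handled with the correct sign bookkeeping: $p=0$ when $|u_j|\le\ell$; $p = \ell - u_j < 0$ paired with $u_j - c_j \le \ell$ giving $p + c_j \ge 0$ when $u_j > \ell$; and $p = -\ell - u_j > 0$ paired with $u_j - c_j \ge -\ell$ giving $p + c_j \le 0$ when $u_j < -\ell$. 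So your proposal is a self-contained, elementary proof of precisely the fact the paper imports by citation; there is nothing to add or repair.
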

\begin{remark}
Note that other properties exist for saturation yielding to other representations of the saturation maps (see, e.g.,  \cite[Section 1.7]{TGS11}). In the present work, we insist on using generalized sector conditions because they provide constructive methods to design Lyapunov functions. 
\end{remark}
As a consequence of Lemma \ref{lem:generalized sector condition}, for all diagonal positive definite matrices
$\mathbf{D}\in\mathbb{R}^{m\times m}$, for all
$\mathbf{C}\in\mathbb{R}^{m\times n}$, and all $z \in \mathbb{R}^n$ such
that  $|((\mathbf{K}-\mathbf{C}) z)_j| \leq \ell$, $j=1\ldots, m$, we have:
\begin{equation}\label{sector:condition}
\phi( \mathbf{K}z) ^\top \mathbf{D}   (\phi(\mathbf{K}z)+ \mathbf{C}z )\leq 0.
\end{equation}

We recall the following well-known Schur complement lemma (see e.g. \cite[Chapter 2, p. 7]{BEF94}):
\begin{lemma}
\label{lem:Schur_complement} 
Let $A\in\R^{n\times n}$, $B\in\R^{m\times n}$, $C\in\R^{m\times m}$ and let $
M:=
\begin{pmatrix}
A & B^\top\\
B	& C
\end{pmatrix}
$.
If $C$ is positive definite, then $M$ is positive semidefinite if and only if its Schur complement 
$M/ C:=A-B^\top C^{-1}B$ is positive semidefinite.
\end{lemma}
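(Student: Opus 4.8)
The plan is to reduce $M$ to block-diagonal form by a congruence transformation and then invoke the standard fact that congruence by an invertible matrix preserves positive (semi)definiteness. Throughout I take $A$ and $C$ to be symmetric, which is the implicit standing assumption needed for $M$ to be a symmetric matrix to which the notion of positive semidefiniteness applies; since $C$ is positive definite it is in particular symmetric and invertible, and $C^{-1}$ is again symmetric. This lets me form the Schur complement $S:=A-B^\top C^{-1}B$ and propose the factorization
\begin{equation*}
M = T^\top \begin{pmatrix} S & 0 \\ 0 & C \end{pmatrix} T,
\qquad
T := \begin{pmatrix} I & 0 \\ C^{-1}B & I \end{pmatrix}.
\end{equation*}

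First I would verify this identity by direct block multiplication: since $C^{-1}$ is symmetric, $T^\top=\begin{pmatrix} I & B^\top C^{-1}\\ 0 & I\end{pmatrix}$, and carrying out the two products shows the $(1,1)$ block becomes $S+B^\top C^{-1}B=A$, the off-diagonal blocks become $B^\top$ and $B$, and the $(2,2)$ block stays $C$, so the right-hand side equals $M$. Next I would observe that $T$ is block lower-triangular with identity diagonal blocks, hence $\det T=1$ and $T$ is invertible; therefore $v\mapsto Tv$ is a bijection and $M\succeq 0$ holds if and only if $\mathrm{diag}(S,C)\succeq 0$. A block-diagonal symmetric matrix is positive semidefinite precisely when each diagonal block is, so $\mathrm{diag}(S,C)\succeq 0$ is equivalent to ($S\succeq 0$ and $C\succeq 0$). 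Since $C\succ 0$ is given, the condition collapses to $S\succeq 0$, i.e. $A-B^\top C^{-1}B\succeq 0$, which is exactly the claim.

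An equivalent and perhaps more transparent route, which I would keep in reserve, is to complete the square directly in the quadratic form. Writing $v=(x,y)$ one has $v^\top M v = x^\top A x + 2y^\top B x + y^\top C y$, and minimizing over $y$ (possible because $C\succ 0$) at $y^\ast=-C^{-1}Bx$ yields $\min_y v^\top M v = x^\top S x$; then $M\succeq 0$ is equivalent to $x^\top S x\geq 0$ for all $x$, again giving $S\succeq 0$. The argument involves no genuine obstacle: it is essentially bookkeeping, and the only points requiring care are the symmetry of $A,C$ (so that $M$ is symmetric and the factorization closes) and the use of $C\succ 0$ both to make $C^{-1}$ well defined and to discard the $C\succeq 0$ condition at the final step.
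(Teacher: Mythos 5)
Your proof is correct. Note that the paper does not prove this lemma at all; it simply cites it as a well-known result from the LMI literature (\cite[Chapter 2, p.~7]{BEF94}), and the argument you give — the congruence factorization $M = T^\top \mathrm{diag}(A-B^\top C^{-1}B,\,C)\,T$ with unit-triangular $T$, equivalently completing the square in the quadratic form — is precisely the standard proof found in that reference. Your attention to the implicit symmetry of $A$ and $C$, and to the two distinct roles of $C\succ 0$ (invertibility of $C$ and discarding the block condition $C\succeq 0$), is exactly the right bookkeeping.
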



The following result is only a small variation of \cite[Theorem 1]{SiT05}. Let us give a full proof using the notation used in this paper, for the sake of completeness.

%
%
\begin{proposition}
\label{prop:Stabilization_FiniteDim_Sys_with_Saturation} 
Under Assumption~\ref{ass:Stabilizability}, let $\mathbf{K}\in\R^{m\times n}$ be such that $\mathbf{A}+\mathbf{B}\mathbf{K}$ is Hurwitz.
Let 
$P\in\mathbb{R}^{n\times n}$ be symmetric positive definite, $\mathbf{D}
\in\mathbb{R}^{m\times m}$ diagonal positive definite and $\mathbf{C}\in\mathbb{R}^{m\times n}$ such that 
\begin{equation}
{\small M_1:=\left[\begin{array}{cc} 
(\mathbf{A}+ \mathbf{B} \mathbf{K})^\top P + P (\mathbf{A}+ \mathbf{B} \mathbf{K}) &  P \mathbf{B} - (\mathbf{D}\mathbf{C})^\top  \\ 
 (P \mathbf{B})^{\top} - \mathbf{D}\mathbf{C} & -2 \mathbf{D}  \end{array}\right]
< 0
}
\label{eq:LMI1}
\end{equation}
and 
\begin{eqnarray}
M_2:=\left[\begin{array}{cc}
P & (\mathbf{K}-\mathbf{C} ) ^\top
\\
 \mathbf{K}-\mathbf{C} & \ell^2 I_m
\end{array}\right]
\geq 0.
\label{eq:LMI2}
\end{eqnarray}
Then the finite-dimensional system \eqref{eq:Systfini_saturated_FW} is locally exponentially stable in $0$ with a region of attraction given by 
\begin{eqnarray}
\mathcal{A}:= \{z: \; z ^\top P z \leq 1\}.
\label{eq:Attraction_Region}
\end{eqnarray}
Moreover, in ${\cal A}$, the function $V_1$ defined by $V_1(z):= z ^\top P z$, $z\in\R^n$, decreases exponentially fast to $0$ along the solutions to \eqref{eq:Systfini_saturated_FW},
i.e. there is a constant $\alpha>0$ so that 
\begin{eqnarray}
\dot V_1 (z) \leq - \alpha |z| ^2, \quad z \in {\cal A}. 
\label{eq:Decay_Linear_Controller}
\end{eqnarray}
\end{proposition}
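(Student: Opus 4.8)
The plan is to take $V_1(z):=z^\top P z$ as a Lyapunov function, to show that \eqref{eq:LMI1} makes $\dot V_1$ negative definite wherever the generalized sector condition is available, and to use \eqref{eq:LMI2} to guarantee that this condition indeed holds on the whole ellipsoid $\mathcal{A}$. Writing $\sat(\mathbf{K}z)=\mathbf{K}z+\phi(\mathbf{K}z)$ with the deadzone $\phi$ and setting $\mathbf{A}_{\mathrm{cl}}:=\mathbf{A}+\mathbf{B}\mathbf{K}$, the closed-loop vector field of \eqref{eq:Systfini_saturated_FW} becomes $\mathbf{A}_{\mathrm{cl}}z+\mathbf{B}\phi(\mathbf{K}z)$, so that
\[
\dot V_1(z)=z^\top\!\big(\mathbf{A}_{\mathrm{cl}}^\top P+P\mathbf{A}_{\mathrm{cl}}\big)z+2\,z^\top P\mathbf{B}\,\phi(\mathbf{K}z).
\]

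Writing $\phi:=\phi(\mathbf{K}z)$ and $\xi:=(z^\top,\phi^\top)^\top$, I would then add to $\dot V_1$ the quantity $-2\phi^\top\mathbf{D}(\phi+\mathbf{C}z)$, which is nonnegative by the sector condition \eqref{sector:condition}. A short computation (using that $\mathbf{D}$ is diagonal, hence symmetric) shows that the result is exactly the quadratic form $\xi^\top M_1\xi$ with $M_1$ as in \eqref{eq:LMI1}. Hence $\dot V_1(z)\leq\xi^\top M_1\xi$ whenever the sector condition applies, and since $M_1<0$ this yields $\dot V_1(z)\leq-\alpha|\xi|^2\leq-\alpha|z|^2$ with $\alpha:=\lambda_{\min}(-M_1)>0$, which is precisely \eqref{eq:Decay_Linear_Controller}.

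The step I expect to require most care is ensuring that the sector condition of Lemma~\ref{lem:generalized sector condition} is legitimately available at every point of $\mathcal{A}$, not merely near the origin; this is the role of \eqref{eq:LMI2}. Applying the Schur complement Lemma~\ref{lem:Schur_complement} to $M_2$ (whose lower-right block $\ell^2 I_m$ is positive definite) shows that \eqref{eq:LMI2} is equivalent to $P\geq\ell^{-2}(\mathbf{K}-\mathbf{C})^\top(\mathbf{K}-\mathbf{C})$. Therefore, for any $z$ with $z^\top P z\leq1$ we get $|(\mathbf{K}-\mathbf{C})z|^2\leq\ell^2\,z^\top P z\leq\ell^2$, so each component obeys $|((\mathbf{K}-\mathbf{C})z)_j|\leq\ell$, which is exactly the hypothesis of Lemma~\ref{lem:generalized sector condition}. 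Consequently the bound $\dot V_1\leq-\alpha|z|^2$ holds throughout $\mathcal{A}$.

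It remains to assemble the conclusions. Forward invariance of $\mathcal{A}=\{V_1\leq1\}$ is immediate, since $\dot V_1<0$ on $\mathcal{A}\setminus\{0\}$ prevents $V_1$ from increasing along trajectories issued from $\mathcal{A}$. Combining $\dot V_1\leq-\alpha|z|^2$ with $V_1\leq\lambda_{\max}(P)|z|^2$ gives $\dot V_1\leq-\frac{\alpha}{\lambda_{\max}(P)}V_1$, whence $V_1(z(t))\leq e^{-ct}V_1(z_0)$ with $c=\alpha/\lambda_{\max}(P)$; squeezing $V_1$ between $\lambda_{\min}(P)|z|^2$ and $\lambda_{\max}(P)|z|^2$ turns this into $|z(t)|\leq Me^{-at}|z_0|$ with $M=\sqrt{\lambda_{\max}(P)/\lambda_{\min}(P)}$ and $a=\alpha/(2\lambda_{\max}(P))$. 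Since $P>0$, the set $\mathcal{A}$ contains a closed ball $\overline{B_\varepsilon(0)}$ and $0\in\intt\mathcal{A}$, so $\mathcal{A}$ satisfies properties (i)--(iii) of Definition~\ref{def:Region_of_Attraction} and the system is locally exponentially stable in $0$ with region of attraction $\mathcal{A}$ in the sense of Definition~\ref{def:LocExpStab_with_DomAttr_S}.
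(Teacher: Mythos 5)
Your proof is correct and follows essentially the same route as the paper's: the quadratic Lyapunov function $V_1(z)=z^\top Pz$, the generalized sector condition of Lemma~\ref{lem:generalized sector condition} to bound $\dot V_1$ by the quadratic form $\xi^\top M_1\xi$ in $\xi=(z^\top,\phi^\top)^\top$, and the Schur complement Lemma~\ref{lem:Schur_complement} to show that \eqref{eq:LMI2} forces the ellipsoid $\mathcal{A}$ into the region where the sector condition is valid. The only differences are cosmetic: the paper additionally verifies feasibility of \eqref{eq:LMI1}--\eqref{eq:LMI2} (not logically needed for the stated implication), while you spell out the final invariance and exponential-decay bookkeeping more explicitly.
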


\begin{proof}
Let us show that \eqref{eq:LMI1}, \eqref{eq:LMI2} are feasible. As $\mathbf{A}+\mathbf{B}\mathbf{K}$ is Hurwitz, there is a symmetric positive definite matrix
$P\in\mathbb{R}^{n\times n}$ such that $(\mathbf{A}+ \mathbf{B} \mathbf{K})^\top P + P (\mathbf{A}+ \mathbf{B} \mathbf{K}) <0$ and, multiplying if needed $P$ by a positive constant, we may additionally assume that $\left[\begin{array}{cc}
P & \mathbf{K}^\top
\\
 \mathbf{K}& \ell^2 I_m
\end{array}\right]
\geq 0.
$ Then, letting $\mathbf{C}=0\in\mathbb{R}^{m\times n}$, and picking a diagonal positive definite matrix $\mathbf{D} \in\mathbb{R}^{m\times m}$ with a norm sufficiently large, we get diagonal positive definite matrix $\mathbf{D} \in\mathbb{R}^{m\times m}$ and a matrix $\mathbf{C}\in\mathbb{R}^{m\times n}$ such that (\ref{eq:LMI1}) and (\ref{eq:LMI2}) hold. 

Now pick $P, \mathbf{D}, \mathbf{C}$ such that \ref{eq:LMI1}) and (\ref{eq:LMI2}) hold and consider the Lyapunov function candidate
\[
V_1(z)= z ^\top P z,
\]
where $P\in\R^{n\times n}$ is the symmetric positive definite matrix given
by the assumptions. The time-derivative of $V_1$ along the solutions to
\eqref{eq:Systfini_saturated_FW} for $z \in \R^n$ is given by
\begin{equation*}
 \dot V_1(z) =  z ^\top [(\mathbf{A}+ \mathbf{B} \mathbf{K})^\top P + P (\mathbf{A}+ \mathbf{B} \mathbf{K})]   z + 2  z ^\top P\mathbf{B} \phi(\mathbf{K} z).   
\end{equation*}

Now assuming $|((\mathbf{K}-\mathbf{C}) z)_j| \leq \ell$ for all $j=1,\ldots,m$, it follows from (\ref{sector:condition}) that 
\begin{eqnarray*}
\dot V_1(z) &\leq&  
 z ^\top [(\mathbf{A}+ \mathbf{B} \mathbf{K})^\top P + P (\mathbf{A}+ \mathbf{B} \mathbf{K})]   z \\
&&\quad + 2  z ^\top P\mathbf{B} \phi( \mathbf{K} z) - 2 \phi( \mathbf{K}z)^\top \mathbf{D} (\phi(\mathbf{K}z)+\mathbf{C} z)\\
&=&\left[\begin{array}{c} z \\ \phi(\mathbf{K}z)\end{array}\right]^\top 
M_1    
\left[\begin{array}{c} z \\ \phi(\mathbf{K}z)\end{array}\right]
.
\end{eqnarray*}
In view of \eqref{eq:LMI1}, this implies the estimate \eqref{eq:Decay_Linear_Controller} selecting $-\alpha$ as the maximal eigenvalue of $M_1$.

It remains to ensure that $|((\mathbf{K}-\mathbf{C}) z)_j| \leq \ell$ is satisfied for all $j=1, \ldots, m$.
To do this, we use the Lyapunov function $V_1$ and we impose that the ellipsoid $\{z\in\R^n:\;  z ^\top P  z \leq 1\}$ is included in the ellipsoid $\{z\in\R^n:\; |(\mathbf{K}-\mathbf{C}) z| \leq \ell \}$ (this implies that $|((\mathbf{K}-\mathbf{C}) z)_j| \leq \ell$ holds for all \linebreak $j=1, \ldots, m$). This inclusion is equivalent to the inclusion of $\{z\in\R^n:\;  z ^\top P  z \leq 1\}$ in the set $\{z\in \R^n:  z ^\top (\mathbf{K}-\mathbf{C})^\top (\mathbf{K}-\mathbf{C}) z \leq  \ell^2\}$
which is again equivalent to the matrix inequality $ P-(\mathbf{K}-\mathbf{C} ) ^\top \frac{1}{\ell^2} (\mathbf{K}-\mathbf{C} )\geq 0$. 
As $\ell^2 I_m$ is positive definite, 
Lemma~\ref{lem:Schur_complement}
ensures that this latter matrix inequality is equivalent to \eqref{eq:LMI2}.
%
\end{proof}



\begin{remark}
We note that the formulation of
Proposition~\ref{prop:Stabilization_FiniteDim_Sys_with_Saturation}
immediately gives room for a larger estimate for the domain of attraction
so that the estimate given by $\{z: \; z ^\top P z \leq 1\}$ can never be
optimal. Indeed, in this region we have $\dot V_1 (z) \leq - \alpha |z|
^2$, so that by a continuity and compactness argument it follows that
$\dot V_1 (z)<0$ on an enlarged region of the form $\{z: \; z ^\top P z
\leq 1+\varepsilon\}$, for a suitable $\varepsilon>0$.
\end{remark}

With the previous result, it is also possible to analyze global stability. The region of
attraction is global as soon as for all $z \in\mathbb{R}^{ n}$, it holds that
$|((\mathbf{K}-\mathbf{C}) z)_j| \leq \ell$, $j=1,\ldots,m$. This is
equivalent to $\mathbf{K}=\mathbf{C}$. We thus obtain the following corollary:

\begin{corollary}
\label{cor:Global_Stabilization} 
If there exist a symmetric positive definite matrix $P\in\mathbb{R}^{n\times n}$
and a diagonal positive matrix $\mathbf{D}\in\mathbb{R}^{m\times m}$ such that  \eqref{eq:LMI1}
holds with $\mathbf{C}:=\mathbf{K}$, 
then the finite-dimensional system \eqref{eq:Systfini_saturated_FW} is globally exponentially stable in $0$. Moreover the Lyapunov function $V_1$ decreases exponentially fast to $0$ along the solutions to \eqref{eq:Systfini_saturated_FW}.
\end{corollary}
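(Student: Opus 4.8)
The plan is to show that Corollary~\ref{cor:Global_Stabilization} is essentially the $\mathbf{C}=\mathbf{K}$ specialization of Proposition~\ref{prop:Stabilization_FiniteDim_Sys_with_Saturation}, where the region-of-attraction constraint \eqref{eq:LMI2} becomes vacuous. First I would set $\mathbf{C}:=\mathbf{K}$ in the generalized sector condition \eqref{sector:condition}; its hypothesis is $|((\mathbf{K}-\mathbf{C})z)_j|\leq\ell$ for all $j$, which now reads $|0|\leq\ell$ and therefore holds automatically for \emph{every} $z\in\R^n$, with no ellipsoid restriction. Hence \eqref{sector:condition} is valid globally.

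Next I would revisit the Lyapunov computation from the proof of Proposition~\ref{prop:Stabilization_FiniteDim_Sys_with_Saturation}. Taking $V_1(z)=z^\top P z$, the time-derivative along \eqref{eq:Systfini_saturated_FW} is
\[
\dot V_1(z) = z^\top[(\mathbf{A}+\mathbf{B}\mathbf{K})^\top P + P(\mathbf{A}+\mathbf{B}\mathbf{K})]z + 2z^\top P\mathbf{B}\,\phi(\mathbf{K}z).
\]
Subtracting twice the (now globally valid) sector term \eqref{sector:condition} with $\mathbf{C}=\mathbf{K}$ and assembling the quadratic form in $(z,\phi(\mathbf{K}z))$ yields, exactly as before,
\[
\dot V_1(z) \leq \begin{bmatrix} z \\ \phi(\mathbf{K}z)\end{bmatrix}^\top M_1 \begin{bmatrix} z \\ \phi(\mathbf{K}z)\end{bmatrix},
\]
where $M_1$ is \eqref{eq:LMI1} evaluated at $\mathbf{C}=\mathbf{K}$. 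Since this inequality now holds for all $z\in\R^n$, and since \eqref{eq:LMI1} gives $M_1<0$, I would set $-\alpha$ equal to the maximal eigenvalue of $M_1$ and conclude $\dot V_1(z)\leq -\alpha|z|^2$ for all $z\in\R^n$.

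Finally, from the global decay estimate $\dot V_1(z)\leq-\alpha|z|^2$ together with the equivalence of $V_1$ to $|z|^2$ (there are constants $0<c_1\leq c_2$ with $c_1|z|^2\leq z^\top P z\leq c_2|z|^2$ since $P$ is symmetric positive definite), a standard comparison argument gives $V_1(z(t))\leq e^{-(\alpha/c_2)t}V_1(z(0))$, and hence $|z(t)|\leq M e^{-at}|z(0)|$ with $a=\alpha/(2c_2)$ and $M=\sqrt{c_2/c_1}$, for every $z(0)\in\R^n$. This is precisely global exponential stability in the sense of Definition~\ref{def:LocExpStab_with_DomAttr_S}, and $V_1$ decreases exponentially along solutions as claimed. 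I do not expect a serious obstacle here: the only point requiring care is observing that dropping \eqref{eq:LMI2} is legitimate precisely because the sector condition's domain constraint degenerates to all of $\R^n$ when $\mathbf{C}=\mathbf{K}$; everything else reuses the Proposition's argument verbatim.
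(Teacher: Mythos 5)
Your proposal is correct and follows essentially the same route as the paper: the text preceding the corollary observes precisely that the sector condition's constraint $|((\mathbf{K}-\mathbf{C})z)_j|\leq\ell$ holds for all $z\in\R^n$ exactly when $\mathbf{C}=\mathbf{K}$, whence the Lyapunov computation of Proposition~\ref{prop:Stabilization_FiniteDim_Sys_with_Saturation} (with \eqref{eq:LMI2} rendered trivial) yields $\dot V_1(z)\leq-\alpha|z|^2$ globally. Your added comparison argument converting this into the explicit estimate $|z(t)|\leq Me^{-at}|z(0)|$ is a standard and correct way to make the conclusion of global exponential stability fully explicit.
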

In our context, the main interest of
Proposition~\ref{prop:Stabilization_FiniteDim_Sys_with_Saturation} lies in
the following consequence for system \eqref{closed-loop:sat}.
%
\begin{theorem}{}
    \label{theo:satstab-infdim}
Consider system \eqref{closed-loop:sat} with $(\mathbf{A},\mathbf{B})$ stabilizable. Let $\mathbf{K}$ be such that 
$\mathbf{A} + \mathbf{B}\mathbf{K}$ is Hurwitz and $K$ be as in \eqref{eq:Stabilizing_controller}.
Then the closed-loop system with the feedback $K$
\begin{align}
& w_t(t,x)=w_{xx}(t,x)+c(x)w(t,x) \nonumber\\
&\qquad\qquad + \sum_{j=1}^m b_k(x) \sat\big((K \pi_n w(t,\cdot) )_k\big),  \ t>0, \; x \in (0,L), \nonumber\\
& w(t,0)=w(t,L)=0, \quad t>0, \\
& w(0,x)=w^0(x) , \quad x \in (0,L).    \nonumber
\end{align}
is locally exponentially stable in $0$ with region of attraction $\imath({\cal A})
\times X_n^\perp$. In addition, the constants of decay can be chosen
uniformly on $\imath({\cal A})
\times X_n^\perp$.
\end{theorem}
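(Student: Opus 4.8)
The theorem states that the closed-loop reaction-diffusion equation with the saturated feedback $K$ is locally exponentially stable at $0$ with region of attraction $\iota(\mathcal{A}) \times X_n^\perp$, where $\mathcal{A} = \{z : z^\top P z \leq 1\}$ comes from Proposition 3, and with uniform decay constants.

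**The key observation.**

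This is really a composition of two earlier results. Proposition 3 (Stabilization_FiniteDim_Sys_with_Saturation) tells us that under the LMI conditions, the finite-dimensional system \eqref{eq:Systfini_saturated_FW} is locally exponentially stable at $0$ with region of attraction $\mathcal{A}$. And Proposition 1 (Attraction_region_finite_dim_and_infinite_dim) is a general lifting result: whenever the finite-dimensional part is locally exponentially stable with region of attraction $S$, the full PDE system \eqref{closed-loop:sat} with feedback \eqref{eq:Stabilizing_controller} is locally exponentially stable with region of attraction $\iota(S) \times X_n^\perp$, including the uniform-decay statement on compact subsets.

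So the proof should just be a matter of checking that we are in the hypotheses of both.

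**My plan.**

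The strategy is to chain the two propositions. Let me write out the steps.

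First, I need to confirm the LMIs are available. Under Assumption 1, $(\mathbf{A},\mathbf{B})$ is stabilizable and $\mathbf{K}$ makes $\mathbf{A}+\mathbf{B}\mathbf{K}$ Hurwitz. The first paragraph of the proof of Proposition 3 already showed that \eqref{eq:LMI1} and \eqref{eq:LMI2} are feasible — one constructs $P$ from Lyapunov's equation, scales it so the second LMI holds with $\mathbf{C}=0$, and takes $\mathbf{D}$ with large enough norm. So we obtain $P, \mathbf{D}, \mathbf{C}$ satisfying the hypotheses of Proposition 3.

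Second, apply Proposition 3 with this data. It yields that the finite-dimensional closed-loop system \eqref{eq:Systfini_saturated_FW} is locally exponentially stable at $0$ with region of attraction $\mathcal{A} = \{z : z^\top P z \leq 1\}$.

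Third, invoke Proposition 1(ii) with $S = \mathcal{A}$. Since \eqref{eq:Systfini_saturated_FW} is locally exponentially stable with region of attraction $\mathcal{A}$, Proposition 1(ii) gives exactly that the PDE system \eqref{closed-loop:sat} with feedback \eqref{eq:Stabilizing_controller} is locally exponentially stable at $0$ with region of attraction $\iota(\mathcal{A}) \times X_n^\perp$.

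Fourth, the uniform-decay claim is precisely the final statement of Proposition 1: for any closed and bounded $G \subset \intt(\iota(\mathcal{A}) \times X_n^\perp)$ there exist $M,a > 0$ giving \eqref{4:juin} for all initial data in $G$. This delivers the uniformity of the decay constants on (compact subsets of) $\iota(\mathcal{A}) \times X_n^\perp$.

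**Where the real work lives.**

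Honestly, for this theorem there is no serious obstacle — it is a corollary bundling Propositions 1 and 3 together, and the two genuinely hard estimates (the sector-condition Lyapunov computation, and the infinite-dimensional exponential-decay estimate via the variation-of-constants integral with the $1/(\eta-a)$ bound) have already been carried out in the respective proofs. The only point needing a sentence of care is matching notation: one must identify the feedback $K$ of the theorem with the linear map inducing $\mathbf{K}$ via $\mathbf{K}_j = Ke_j$, and confirm that the saturated feedback written componentwise as $\sat((K\pi_n w)_k)$ is exactly the closed-loop form \eqref{eq:Componentwise_equations_with_saturation_1_FW} to which Proposition 1 applies. If anything were to be delicate it would be ensuring that $\mathcal{A}$ as produced by Proposition 3 is indeed an admissible region of attraction $S$ in the sense of Definition 1 (forward invariance plus convergence), but this is guaranteed by Proposition 3 itself, so nothing new is required here.
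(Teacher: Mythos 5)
Your proposal follows exactly the paper's route: chain Proposition~\ref{prop:Stabilization_FiniteDim_Sys_with_Saturation} (feasibility of the LMIs plus local exponential stability of \eqref{eq:Systfini_saturated_FW} with region of attraction ${\cal A}$) with Proposition~\ref{prop:Attraction_region_finite_dim_and_infinite_dim} to lift the result to the PDE. For the stability claim and the identification of the region of attraction $\imath({\cal A}) \times X_n^\perp$, this is correct and is precisely what the paper does.

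However, your step 4 does not deliver the full final clause of the theorem. The theorem asserts that the decay constants can be chosen uniformly on \emph{all} of $\imath({\cal A}) \times X_n^\perp$, and this set is unbounded (it contains the whole infinite-dimensional subspace $X_n^\perp$) and is not contained in the interior of itself in the relevant sense. The \emph{statement} of Proposition~\ref{prop:Attraction_region_finite_dim_and_infinite_dim} only gives uniform constants on closed and bounded subsets $G \subset \intt(\iota(S) \times X_n^{\bot})$ — your own parenthetical \q{(compact subsets of)} concedes exactly this weaker conclusion. To close the gap you must use what Proposition~\ref{prop:Stabilization_FiniteDim_Sys_with_Saturation} additionally provides: the Lyapunov decay $\dot V_1(z) \leq -\alpha|z|^2$ holds on the whole set ${\cal A}$, so the exponential estimate for the finite-dimensional part is uniform on all of ${\cal A}$, not merely on compact subsets of its interior. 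Feeding this uniform estimate into the \emph{proof} of Proposition~\ref{prop:Attraction_region_finite_dim_and_infinite_dim} (i.e., taking $G' = \iota^{-1}\circ\pi_n$ of the full region, which the compactness argument there was only needed to substitute for), the variation-of-constants bound for the modes $j > n$ depends on the initial data only through $|z(0)|$ and $\|w(0,\cdot)\|_X$ linearly, so the constants $M, a$ in \eqref{4:juin} can indeed be chosen uniformly on all of $\imath({\cal A}) \times X_n^\perp$. This is exactly the one-sentence observation the paper makes, and it is the only substantive point your write-up is missing.
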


\begin{proof}
    Exponential stabilization and the region of attraction are immediate
    consequences of
    Proposition~\ref{prop:Stabilization_FiniteDim_Sys_with_Saturation} in
    combination with
    Proposition~\ref{prop:Attraction_region_finite_dim_and_infinite_dim}. As
    the exponential estimate can be chosen uniformly on the set ${\cal A}$,
    the proof of
    Proposition~\ref{prop:Attraction_region_finite_dim_and_infinite_dim}
    shows that the uniformity holds on all of $\imath({\cal A}) \times
    X_n^\perp$.
\end{proof}


\subsection{Numerical implementation of Proposition~\ref{prop:Stabilization_FiniteDim_Sys_with_Saturation}}
\label{sec:Numerical implementation}

The inequalities \eqref{eq:LMI1} and \eqref{eq:LMI2} are \emph{bilinear} matrix inequalities, since they have 
the bilinear cross term $\mathbf{D}\mathbf{C}$ in the unknowns $\mathbf{D}$ and $\mathbf{C}$. The nonlinearity complicates the numerical analysis of \eqref{eq:LMI1} and \eqref{eq:LMI2}. However, they can be reformulated into equivalent \emph{linear matrix inequalities (LMIs)}, which are easier to solve numerically.

Let $S= P^{-1}$ and $ \mathbf{E} = \mathbf{D} ^{-1}$. 
Since $M_1<0$ if and only if 
\[
\left[\begin{array}{cc} S& 0 \\ 0 &  \mathbf{E} \end{array}\right]^\top
M_1
\left[\begin{array}{cc} S& 0 \\ 0 &  \mathbf{E} \end{array}\right]<0,
\]
we obtain that (\ref{eq:LMI1})  is equivalent to
\begin{equation}
{\small \left[\begin{array}{cc} 
S (\mathbf{A}+ \mathbf{B} \mathbf{K})^\top +  (\mathbf{A}+ \mathbf{B} \mathbf{K})S  &   \mathbf{B} \mathbf{E}  - S \mathbf{C}^\top  \\ 
 ( \mathbf{B} \mathbf{E}  - S \mathbf{C}^\top )^\top & -2 \mathbf{E}  \end{array}\right]
< 0.
}
\end{equation}
Now pre- and post-multiplying (\ref{eq:LMI2}) by $\left[\begin{array}{cc} S& 0 \\ 0 &   I_m
\end{array}\right]$, we get that (\ref{eq:LMI2})  is equivalent to 
\begin{eqnarray}
\left[\begin{array}{cc}
S &S  \mathbf{K} ^\top- S \mathbf{C}  ^\top
\\
(S  \mathbf{K} ^\top- S \mathbf{C}  ^\top
)^\top  & \ell^2 I_m
\end{array}\right]
\geq 0.
\end{eqnarray}
Letting $ \mathbf{Y}= S \mathbf{C}  ^\top$, we obtain (see also \cite[Theorem 1]{SiT05} in a different context):
\begin{proposition}
\label{prop:BMI-to-LMI} 
Existence of a symmetric positive definite matrix
$P\in\mathbb{R}^{n\times n}$, a diagonal positive definite matrix $\mathbf{D} \in\mathbb{R}^{m\times m}$ and a matrix $\mathbf{C}\in\mathbb{R}^{m\times n}$ such that (\ref{eq:LMI1}) and (\ref{eq:LMI2}) hold is equivalent to 
 the existence of a symmetric positive definite matrix
$S\in\mathbb{R}^{n\times n}$, a diagonal positive definite matrix $\mathbf{E} \in\mathbb{R}^{m\times m}$ and $\mathbf{Y}\in\mathbb{R}^{m\times n}$ for which it holds that
\begin{equation}\label{eq:LMI1:eq}
{\small \left[\begin{array}{cc} 
S (\mathbf{A}+ \mathbf{B} \mathbf{K})^\top +  (\mathbf{A}+ \mathbf{B} \mathbf{K})S  &   \mathbf{B} \mathbf{E}  - \mathbf{Y}  \\ 
 ( \mathbf{B} \mathbf{E}  - \mathbf{Y}^\top )^\top & -2 \mathbf{E}  \end{array}\right]
< 0
}
\end{equation}
and 
\begin{eqnarray}\label{eq:LMI2:eq}
\left[\begin{array}{cc}
S &S  \mathbf{K} ^\top-  \mathbf{Y}  ^\top
\\
(S  \mathbf{K} ^\top-  \mathbf{Y}  ^\top
)^\top  & \ell^2 I_m
\end{array}\right]
\geq 0.
\end{eqnarray}
\end{proposition}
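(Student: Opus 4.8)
The plan is to view the two bilinear inequalities and the two linear ones as congruence transforms of each other, so that the whole statement collapses to the elementary fact that, for an invertible matrix $T$ and a symmetric matrix $M$ of matching size, $M<0 \iff T^{\top}MT<0$ and $M\geq 0 \iff T^{\top}MT\geq 0$. This is immediate: writing the quadratic form as $x^{\top}(T^{\top}MT)x=(Tx)^{\top}M(Tx)$ and using that $x\mapsto Tx$ is a bijection of $\R^{N}\setminus\{0\}$, the sign of the form is preserved. First I would fix the change of variables already indicated before the statement: starting from an admissible triple $(P,\mathbf{D},\mathbf{C})$, set $S:=P^{-1}$, $\mathbf{E}:=\mathbf{D}^{-1}$ and $\mathbf{Y}:=S\mathbf{C}^{\top}$. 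Inverting a symmetric positive definite matrix yields a symmetric positive definite matrix, and inverting a diagonal positive definite matrix yields a diagonal positive definite matrix, so $S$ and $\mathbf{E}$ are again admissible; the assignment is a bijection, with inverse $P=S^{-1}$, $\mathbf{D}=\mathbf{E}^{-1}$, $\mathbf{C}=(S^{-1}\mathbf{Y})^{\top}$. It therefore suffices to show that under this substitution \eqref{eq:LMI1} turns into \eqref{eq:LMI1:eq} and \eqref{eq:LMI2} into \eqref{eq:LMI2:eq}.

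Second, I would perform the congruence for \eqref{eq:LMI1} with the symmetric, invertible, block-diagonal matrix $T_{1}:=\left[\begin{smallmatrix} S & 0\\ 0 & \mathbf{E}\end{smallmatrix}\right]$ and expand $T_{1}^{\top}M_{1}T_{1}$ block by block, using $SP=PS=I$, $\mathbf{D}\mathbf{E}=\mathbf{E}\mathbf{D}=I$ and $\mathbf{D}^{\top}=\mathbf{D}$. The $(1,1)$ block collapses to $S(\mathbf{A}+\mathbf{B}\mathbf{K})^{\top}+(\mathbf{A}+\mathbf{B}\mathbf{K})S$, the $(2,2)$ block to $-2\mathbf{E}$, and the off-diagonal block $S\bigl(P\mathbf{B}-(\mathbf{D}\mathbf{C})^{\top}\bigr)\mathbf{E}$ to $\mathbf{B}\mathbf{E}-S\mathbf{C}^{\top}\mathbf{D}\mathbf{E}=\mathbf{B}\mathbf{E}-\mathbf{Y}$. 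This last simplification is the heart of the reformulation: the matrix $\mathbf{D}$ sitting inside the cross term $\mathbf{D}\mathbf{C}$ cancels against $\mathbf{E}=\mathbf{D}^{-1}$, and the remaining product $S\mathbf{C}^{\top}$ is absorbed into the single new unknown $\mathbf{Y}$, so that the bilinear dependence on $(\mathbf{D},\mathbf{C})$ becomes an affine dependence on $(S,\mathbf{E},\mathbf{Y})$. Hence \eqref{eq:LMI1} and \eqref{eq:LMI1:eq} are equivalent.

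Third, I would repeat the congruence for \eqref{eq:LMI2} with $T_{2}:=\left[\begin{smallmatrix} S & 0\\ 0 & I_{m}\end{smallmatrix}\right]$. Now the $(1,1)$ block $SPS$ reduces to $S$, the $(2,2)$ block remains $\ell^{2}I_{m}$, and the off-diagonal block becomes $S(\mathbf{K}-\mathbf{C})^{\top}=S\mathbf{K}^{\top}-\mathbf{Y}$, which is precisely \eqref{eq:LMI2:eq}. Chaining the two equivalences with the bijectivity of the variable change gives the asserted equivalence of feasibility in both directions: an admissible $(P,\mathbf{D},\mathbf{C})$ solving the bilinear pair produces an admissible $(S,\mathbf{E},\mathbf{Y})$ solving the linear pair, and conversely.

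I do not expect a real obstacle, since the argument is pure linear algebra; the only places demanding care are the preservation of the structural classes under inversion (so that $\mathbf{E}$ remains diagonal positive definite and $S$ symmetric positive definite) and the transpose bookkeeping in the cross term, where $(\mathbf{D}\mathbf{C})^{\top}=\mathbf{C}^{\top}\mathbf{D}$ and hence $(\mathbf{D}\mathbf{C})^{\top}\mathbf{E}=\mathbf{C}^{\top}$ relies essentially on the symmetry of the diagonal matrix $\mathbf{D}$. A further point to keep straight while writing is the consistency of the off-diagonal transposes in the displayed inequalities, so that each symmetrized $(2,1)$ block matches the $(1,2)$ block computed above.
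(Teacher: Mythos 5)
Your proof is correct and follows essentially the same route as the paper: the congruence transformations with $\mathrm{diag}(S,\mathbf{E})$ and $\mathrm{diag}(S,I_m)$, together with the change of variables $S=P^{-1}$, $\mathbf{E}=\mathbf{D}^{-1}$, $\mathbf{Y}=S\mathbf{C}^{\top}$ (and its inverse $\mathbf{C}=(S^{-1}\mathbf{Y})^{\top}$), are exactly the paper's argument. Your write-up is merely more explicit about the block-by-block computation and the bijectivity of the variable change, which the paper leaves implicit.
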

This reformulation is important as the constraints (\ref{eq:LMI1:eq}) and  (\ref{eq:LMI2:eq}) are linear in the variables $S$, $\mathbf{E}$ and $\mathbf{Y}$, in contrast to bilinear inequalities (\ref{eq:LMI1}) and  (\ref{eq:LMI2}).

Finally, no structure is imposed on both the matrix $\mathbf{C}\in\mathbb{R}^{m\times n}$ and on the matrix $\mathbf{Y}\in\mathbb{R}^{m\times n}$, and we let $ \mathbf{C}= (S^{-1} \mathbf{Y})  ^\top$ from the knowledge of $S$ and $\mathbf{Y}$ (and respectively we may let $ \mathbf{Y}= P^{-1} \mathbf{C}  ^\top$ from the knowledge of $P$ and $ \mathbf{C} $).

\subsection{Scalar control inputs}
\label{sec:Reformulation_of_MIs}

In this section we specialize our results for scalar control inputs, i.e. $m=1$, which allows for a numerically more efficient method. We need the following lemma.
\begin{lemma}
\label{lem:Pdf_matrix_properties} 
Consider a symmetric matrix $P = 
\begin{pmatrix}
P_{11}	& P_{12}\\
P_{21}	& P_{22}
\end{pmatrix}
\in\R^{(n_1+n_2)\times(n_1+n_2)}$, where $P_{ij}$ are matrices of appropriate dimension and $P_{11}>0$, $P_{22}>0$.
Define
$
P_a:=
\begin{pmatrix}
a P_{11}	& P_{12}\\
P_{21}	& P_{22}
\end{pmatrix}
$	
for
 $a\geq 0$. Then there exists $a^*\geq 0$:
\begin{itemize}
	\item[(i)] $P_{a^*}\geq 0$, and $P_{a^*}$ is not positive definite,
	\item[(ii)] $P_{a}> 0$ for all $a>a^*$,
	\item[(iii)] for all $a\in[0,a^*)$ the matrix $P_{a}$ is not positive semidefinite.		
\end{itemize}
\end{lemma}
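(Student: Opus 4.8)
The plan is to collapse all three claims onto a single scalar threshold by means of the Schur complement, exploiting the hypothesis $P_{22}>0$. First I would record that symmetry of $P$ gives $P_{21}=P_{12}^\top$, and set $Q:=P_{12}P_{22}^{-1}P_{21}=P_{12}P_{22}^{-1}P_{12}^\top$, which is symmetric positive semidefinite since $P_{22}^{-1}>0$. Writing $P_a=\begin{pmatrix} aP_{11} & P_{12}\\ P_{21} & P_{22}\end{pmatrix}$, its Schur complement with respect to the positive definite block $P_{22}$ is exactly $P_a/P_{22}=aP_{11}-Q$. Lemma~\ref{lem:Schur_complement} then yields $P_a\geq 0 \iff aP_{11}-Q\geq 0$, and I would use the strict companion of that lemma (or, equivalently, the determinant identity $\det P_a=\det P_{22}\cdot\det(aP_{11}-Q)$ together with the fact that a positive semidefinite matrix is positive definite precisely when it is nonsingular) to also obtain $P_a>0 \iff aP_{11}-Q>0$. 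Thus each of (i)--(iii) becomes a statement about the one-parameter pencil $aP_{11}-Q$.

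Next I would diagonalize this pencil. Since $P_{11}>0$, the congruence $X\mapsto P_{11}^{-1/2}XP_{11}^{-1/2}$ is an invertible congruence and hence preserves inertia, sending $aP_{11}-Q$ to $aI-\tilde Q$ with $\tilde Q:=P_{11}^{-1/2}QP_{11}^{-1/2}\geq 0$. Therefore $aP_{11}-Q$ is positive definite, positive semidefinite, or has a negative eigenvalue exactly when $aI-\tilde Q$ does. I would then define $a^*:=\lambda_{\max}(\tilde Q)\geq 0$, the largest eigenvalue of the symmetric positive semidefinite matrix $\tilde Q$. Because $\tilde Q$ is symmetric, the eigenvalues of $aI-\tilde Q$ are exactly the numbers $a-\mu$ as $\mu$ ranges over the eigenvalues of $\tilde Q$, which immediately gives $aI-\tilde Q>0\iff a>a^*$, $\ aI-\tilde Q\geq 0\iff a\geq a^*$, and $aI-\tilde Q$ possesses a strictly negative eigenvalue whenever $a<a^*$.

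From here the three properties follow directly. Property (ii) is the implication $a>a^*\Rightarrow P_a>0$. Property (i) holds because at $a=a^*$ the matrix $a^*I-\tilde Q$ is positive semidefinite but singular, having $0$ as an eigenvalue attained on an eigenvector of $\tilde Q$ for $\lambda_{\max}$, so $P_{a^*}\geq 0$ while $P_{a^*}$ fails to be positive definite. Property (iii) is its complement: for $a\in[0,a^*)$ the matrix $aI-\tilde Q$ has a negative eigenvalue, hence is not positive semidefinite, and therefore neither is $P_a$. I would also note that $a^*\geq 0$ makes the interval $[0,a^*)$ meaningful, and that the degenerate case $a^*=0$ (equivalently $Q=0$, i.e. $P_{12}=0$) is consistent, since then (iii) is vacuous while (i) and (ii) still hold.

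The only real obstacle is bookkeeping rather than substance: ensuring the positive definite form of the Schur complement is available (the excerpt states Lemma~\ref{lem:Schur_complement} only in the semidefinite form, which the determinant argument above repairs), and checking that $a^*$ separates the strict and non-strict regimes exactly, so that $P_{a^*}$ lands precisely on the boundary rather than on either open side. The eigenvalue description of $aI-\tilde Q$ pins this boundary behavior down cleanly.
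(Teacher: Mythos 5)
Your proof is correct. You and the paper both begin identically, reducing everything via the Schur complement with respect to the block $P_{22}>0$ to the one-parameter family $aP_{11}-Q$, $Q:=P_{12}P_{22}^{-1}P_{21}$; but from there the arguments diverge. The paper invokes Weyl's inequality to conclude that every eigenvalue of $aP_{11}-Q$ is strictly increasing in $a$ with slope at least $\lambda_{\mathrm{min}}(P_{11})>0$, and then obtains $a^*$ non-constructively by continuity of eigenvalues (smallest eigenvalue negative at $a=0$ or zero, eventually positive, hence crossing zero at some $a^*$). You instead diagonalize the pencil by the congruence $X\mapsto P_{11}^{-1/2}XP_{11}^{-1/2}$, which by Sylvester's law of inertia reduces all three claims to the matrix $aI-\tilde Q$, $\tilde Q:=P_{11}^{-1/2}QP_{11}^{-1/2}\geq 0$, and read off the explicit threshold $a^*=\lambda_{\max}(\tilde Q)$. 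Your route buys two things the paper's does not: first, a closed-form expression for $a^*$ (note that Remark~\ref{rem:Attraction_Regions} describes finding the optimal $\mathbf{D}$ as a root-finding problem for a piecewise analytic function, whereas your formula shows it is just an eigenvalue computation); second, you explicitly patch the fact that Lemma~\ref{lem:Schur_complement} is stated only in semidefinite form while the argument needs the strict version as well --- your determinant identity $\det P_a=\det P_{22}\cdot\det(aP_{11}-Q)$ combined with \enquote{PSD and nonsingular implies PD} supplies this, a point the paper's proof glosses over. What the paper's Weyl-based argument buys in exchange is the monotonicity statement itself (all eigenvalues strictly increasing with an affine lower bound), which is slightly more information than the location of the threshold and is what justifies the paper's subsequent remark about uniqueness of the root; your eigenvalue description recovers the same trichotomy directly, so nothing essential is lost.
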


\begin{proof}
    As $P_{22}>0$, the matrix
    $P_a$ is positive definite if and only if the Schur complement
    \begin{equation*}
        P_a/P_{22} = aP_{11} - P_{12} P_{22}^{-1} P_{21} 
    \end{equation*}
    is positive definite, see Lemma~\ref{lem:Schur_complement}. Note that as $(P_{12})^\top=P_{21}$ and $(P_{22}^{-1})^\top = P_{22}^{-1}$, we have that
\[
(P_{12} P_{22}^{-1} P_{21})^\top = P^\top_{12} (P_{22}^{-1})^\top (P_{21})^\top = P_{12} P_{22}^{-1} P_{21},
\]
		and as $P_{11}$ is positive definite, $P_a/P_{22}$ is a symmetric matrix.
    		
		Weyl's inequality (see \cite[Chapter IV, Corollary 4.9, p. 203]{StS90}) implies that all eigenvalues of
    $P_a/P_{22}$ are strictly increasing functions of $a$ with a slope
    bounded below everywhere by 
    $\lambda_{\mathrm{min}}(P_{11})>0$. The claim is now immediate. Clearly, $P_0$ is not
    positive definite. As all eigenvalues are strictly increasing with an
    affine lower bound, eventually at some $a^*\geq 0$ the smallest eigenvalue is equal to 0
    by continuity of eigenvalues. For all $a> a^*$ all
    eigenvalues are positive.
\end{proof}

%
%

The main result of this section is
\begin{proposition}
\label{prop:Stabilization_FiniteDim_Sys_with_Saturation_modified} 
Under Assumption~\ref{ass:Stabilizability}, let $\mathbf{K}\in\R^{m\times n}$ be such that $\mathbf{A}+\mathbf{B}\mathbf{K}$ is Hurwitz.
Let 
$\tilde{P}\in\mathbb{R}^{n\times n}$ be symmetric positive definite and $\mathbf{C}\in\mathbb{R}^{1\times n}$ such that 
\begin{equation}
{\small
\hspace{-3mm}\tilde{M}_1{:=}\left[\begin{array}{cc} 
(\mathbf{A}+ \mathbf{B} \mathbf{K})^\top \tilde{P} + \tilde{P} (\mathbf{A}+ \mathbf{B} \mathbf{K}) &  \tilde{P} \mathbf{B} - \mathbf{C}^\top  \\ 
 (\tilde{P} \mathbf{B})^{\top} - \mathbf{C} & -2 \end{array}\right]
\hspace{-0.5mm}< 0.
}
\label{eq:LMI1_modified}
\end{equation}
Then the finite-dimensional system \eqref{eq:Systfini_saturated_FW} is locally exponentially stable in $0$ with a region of attraction given by
\begin{eqnarray}
\{z: \; z ^\top \tilde{P} z \leq \mathbf{D}^ {-1} \},
\label{eq:Attraction_Region_D}
\end{eqnarray}
where $\mathbf{D}$ is the minimal real number such that:
\begin{eqnarray}
\tilde{M}_2:=\left[\begin{array}{cc}
\mathbf{D}\tilde{P} & (\mathbf{K}-\mathbf{C} ) ^\top
\\
 \mathbf{K}-\mathbf{C} & \ell^2
\end{array}\right]
\geq 0.
\label{eq:LMI2_modified}
\end{eqnarray}
Moreover, in this region of attraction, the function $V_1$ defined by
$V_1(z)= z ^\top P z := \mathbf{D}  z ^\top \tilde{P} z$, for all $z\in\R^n$, decreases exponentially fast to $0$ along the solutions to \eqref{eq:Systfini_saturated_FW},
i.e. there is a constant $\alpha>0$ so that 
\begin{eqnarray}
\dot V_1 (z) \leq - \alpha |z| ^2. 
\label{eq:Decay_Linear_Controller_MI_reform}
\end{eqnarray}
\end{proposition}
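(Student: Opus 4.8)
The plan is to reduce the statement to Proposition~\ref{prop:Stabilization_FiniteDim_Sys_with_Saturation} by a scaling argument that exploits the fact that, for $m=1$, the diagonal matrix $\mathbf{D}$ degenerates to a single positive scalar. First I would set $P := \mathbf{D}\,\tilde{P}$, with $\mathbf{D}$ the scalar appearing in \eqref{eq:LMI2_modified}. Since $I_m = 1$ here, a block-by-block comparison shows that the matrix $M_2$ of \eqref{eq:LMI2}, assembled from this $P$ and the given $\mathbf{C}$, coincides \emph{exactly} with $\tilde{M}_2$ from \eqref{eq:LMI2_modified}; similarly, because $P\mathbf{B} = \mathbf{D}\tilde{P}\mathbf{B}$, $(\mathbf{D}\mathbf{C})^\top = \mathbf{D}\mathbf{C}^\top$, and the bottom-right block is $-2\mathbf{D} = \mathbf{D}\cdot(-2)$, the matrix $M_1$ of \eqref{eq:LMI1} factors as $M_1 = \mathbf{D}\,\tilde{M}_1$.

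Next I would show that $\mathbf{D}$ is well defined and strictly positive, which is where Lemma~\ref{lem:Pdf_matrix_properties} enters. Viewing $\tilde{M}_2$ as the parametrized matrix $P_a$ of that lemma with $a = \mathbf{D}$, $P_{11} = \tilde{P} > 0$, $P_{22} = \ell^2 > 0$ and off-diagonal block $\mathbf{K}-\mathbf{C}$, the lemma produces a unique threshold $a^* \geq 0$ for which $\tilde{M}_2 \geq 0$ precisely when $\mathbf{D} \geq a^*$; hence the minimal $\mathbf{D}$ in \eqref{eq:LMI2_modified} equals $a^*$ and is attained. To rule out $\mathbf{D} = 0$, I would check directly that the matrix $P_0$ is not positive semidefinite unless $\mathbf{K} = \mathbf{C}$: its quadratic form on $\binom{z}{s}$ equals $2s(\mathbf{K}-\mathbf{C})z + \ell^2 s^2$, which becomes negative for small $s<0$ as soon as $(\mathbf{K}-\mathbf{C})z$ can be made nonzero, i.e.\ whenever $\mathbf{K}\neq\mathbf{C}$; the excluded case $\mathbf{K}=\mathbf{C}$ is exactly the global situation of Corollary~\ref{cor:Global_Stabilization}. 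Thus $\mathbf{D} = a^* > 0$.

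With $\mathbf{D} > 0$ established, the factorizations give $M_1 < 0 \Leftrightarrow \tilde{M}_1 < 0$ and $M_2 \geq 0 \Leftrightarrow \tilde{M}_2 \geq 0$, so the hypotheses \eqref{eq:LMI1} and \eqref{eq:LMI2} of Proposition~\ref{prop:Stabilization_FiniteDim_Sys_with_Saturation} are met by the triple $(P,\mathbf{D},\mathbf{C})$ with $P = \mathbf{D}\tilde{P}$. That proposition then delivers local exponential stability of \eqref{eq:Systfini_saturated_FW}, the decay estimate \eqref{eq:Decay_Linear_Controller_MI_reform} for $V_1(z) = z^\top P z = \mathbf{D}\,z^\top\tilde{P}z$, and the region of attraction $\{z : z^\top P z \leq 1\} = \{z : \mathbf{D}\,z^\top\tilde{P}z \leq 1\} = \{z : z^\top\tilde{P}z \leq \mathbf{D}^{-1}\}$, which is exactly \eqref{eq:Attraction_Region_D}.

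I expect the only genuine obstacle to be the well-posedness and strict positivity of $\mathbf{D}$: that the minimum over $\mathbf{D}$ in \eqref{eq:LMI2_modified} is attained and nonzero is precisely what makes $\{z : z^\top\tilde{P}z \leq \mathbf{D}^{-1}\}$ a meaningful region, and Lemma~\ref{lem:Pdf_matrix_properties} is tailored for this. The remaining work is the routine algebra of the block scalings, after which the stability and decay conclusions are inherited verbatim from Proposition~\ref{prop:Stabilization_FiniteDim_Sys_with_Saturation}.
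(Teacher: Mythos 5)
Your proposal is correct and follows essentially the same route as the paper's proof: both define $P=\mathbf{D}\tilde{P}$, observe that for $m=1$ this scalar scaling identifies \eqref{eq:LMI1}--\eqref{eq:LMI2} with \eqref{eq:LMI1_modified}--\eqref{eq:LMI2_modified}, invoke Lemma~\ref{lem:Pdf_matrix_properties} for the existence of the minimal $\mathbf{D}$, and then inherit the stability, decay, and region-of-attraction conclusions from Proposition~\ref{prop:Stabilization_FiniteDim_Sys_with_Saturation}. Your version is in fact slightly more careful than the published one: the explicit check that the minimal $\mathbf{D}$ is attained and strictly positive unless $\mathbf{K}=\mathbf{C}$ (the global case of Corollary~\ref{cor:Global_Stabilization}) addresses a degenerate case the paper's terse argument passes over.
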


\begin{proof}
Define $P:=\mathbf{D}\tilde{P}$. Substituting this expression into 
\eqref{eq:LMI1} and \eqref{eq:LMI2} and multiplying it by $\mathbf{D}^{-1}$ to obtain the inequalities
\eqref{eq:LMI1_modified} and \eqref{eq:LMI2_modified}.
If there exist $\tilde{P}$ and $\mathbf{C}$ as in Proposition~\ref{prop:Stabilization_FiniteDim_Sys_with_Saturation_modified} so that the LMI \eqref{eq:LMI1_modified} holds, then according to Lemma~\ref{lem:Pdf_matrix_properties}, one can find $\mathbf{D}>0$ so that 
\eqref{eq:LMI2_modified} holds as well.

The estimate for the region of attraction follows from Proposition~\ref{prop:Stabilization_FiniteDim_Sys_with_Saturation}.
We choose the minimal $\mathbf{D}$ to obtain the largest estimate of a region of attraction (for given $\tilde{P}$ and $\mathbf{C}$).
\end{proof}

\begin{remark}
\label{rem:Attraction_Regions} 
Lemma~\ref{lem:Pdf_matrix_properties} can be helpful in finding the
optimal $\mathbf{D}$ as it shows that the problem is to find the unique
root of a piecewise analytic function.
\end{remark}


\subsection{Enlarging the region of attraction using a dynamic controller}
\label{sec:anti-windup}

Consider a
dynamic controller for system \eqref{eq:Systfini_saturated_FW}, instead of a static
controller. By doing so, we add some degrees of freedom and thus we may enlarge the region of attraction. 
Such approach is useful for many control systems (see e.g. \cite[Example 8.1]{TGS11} for a simple $2\times 2$ example). 
To be more specific, we consider 
an additional finite-dimensional state $z_c$ in $\R^{n_c}$ (for a given integer $n_c$) and design matrices $\mathbf{K}_1$, $\mathbf{K}_2$, $\mathbf{A}_c$ and $\mathbf{B}_c$ 
of appropriate dimensions so that the (estimation of the) region of attraction of 
\begin{equation}
\label{finite:dim:sat:augmente}
\begin{array}{rcl}
\dot z&=& \mathbf{A}z + \mathbf{B} \sat (\mathbf{K}_1 z+\mathbf{K}_2 z _c)
\\
\dot z_c&=& \mathbf{A}_1z_c + \mathbf{A}_2  z 
\end{array}
\end{equation}
is larger (in the $z$-direction) than the estimate provided by Proposition \ref{prop:Stabilization_FiniteDim_Sys_with_Saturation} for \eqref{eq:Systfini_saturated_FW}. We rewrite the dynamics (\ref{finite:dim:sat:augmente}) by 
\begin{equation}
\label{finite:dim:sat:augmente_matrix_form}
\begin{array}{rcl}
\dot Z&=& \bar{\mathbf{A}}Z +  \bar{\mathbf{B}} \sat (\bar{\mathbf{K}} Z  )
\end{array}\end{equation}
where 
\begin{equation}
\label{def:bar:matrices}
\bar{\mathbf{A}}= \begin{pmatrix}\mathbf{A} &0 \\ \mathbf{A}_2 & \mathbf{A}_1\end{pmatrix},\; 
\bar{\mathbf{B}}=\begin{pmatrix} \mathbf{B} \\ 0\end{pmatrix}, \;
\bar{\mathbf{K}}=\begin{pmatrix} \mathbf{K}_1 & \mathbf{K}_2\end{pmatrix},
\end{equation}
with $\bar{\mathbf{A}}\in\R^{(n+n_c)\times (n+n_c)}$, $\bar{\mathbf{B}}\in\R^{(n+n_c)\times m}$ and \linebreak
$\bar{\mathbf{K}}\in\R^{m\times (n+n_c)}$.

\smallskip{}

\begin{remark}
\label{rem:Ellipsoids} 
For given symmetric positive definite matrices $P \in
\R^{n\times n}$ and $\bar P \in \R^{(n+n_c)\times (n+n_c)}$, the inclusion
of the ellipsoid $\{z\in \R^n : \; z^\top Pz \leq 1\}$ in
the projection (onto $\R^n$) of the ellipsoid $\{Z \in \R^{n+n_c}: \; Z^\top \bar PZ \leq 1\}$ is equivalent to 
\begin{equation}
\label{eq:Ellipsoids-inclusion}
\begin{pmatrix} I_n & 0\end{pmatrix} \bar P \begin{pmatrix} I_n \\ 0\end{pmatrix} - P \leq 0\ .    
\end{equation}
\end{remark}

Using this remark  and applying Proposition \ref{prop:Stabilization_FiniteDim_Sys_with_Saturation} to system (\ref{finite:dim:sat:augmente}), we have the following:

\begin{proposition}
\label{prop:Stabilization_FiniteDim_Sys_with_Saturation:bis} 
Consider system \eqref{eq:Systfini_saturated_FW} with a stabilizable pair $(\mathbf{A},\mathbf{B}) \in \R^{n\times n}\times\R^{n\times m}$, and matrices $\bar{\mathbf{A}}$, $\bar{\mathbf{B}}$ and $\bar{\mathbf{K}}$ defined in (\ref{def:bar:matrices}).
%
%
Choose a symmetric positive definite matrix $\bar P\in\mathbb{R}^{(n+n_c)\times (n+n_c)}$, a diagonal positive matrix $\bar{\mathbf{D}} \in\mathbb{R}^{m\times m}$ and $\bar{\mathbf{C}}\in\mathbb{R}^{m\times (n+n_c)}$ such that \eqref{eq:Ellipsoids-inclusion} holds and
\begin{eqnarray}
\left[\begin{array}{cc} 
(\bar{\mathbf{A}}+ \bar{\mathbf{B}} \bar{\mathbf{K}})^\top \bar P + \bar P (\bar{\mathbf{A}}+ \bar{\mathbf{B}} \bar{\mathbf{K}}) &  \bar P \bar{\mathbf{B}} - (\bar{\mathbf{D}}\bar{\mathbf{C}})^\top  \\ 
 (\bar P \bar{\mathbf{B}})^{\top} - \bar{\mathbf{D}}\bar{\mathbf{C}} & -2 \bar{\mathbf{D}}  \end{array}\right]
< 0,
\label{eq:LMI1:bis}
\end{eqnarray}
\vspace{-2mm}
\begin{eqnarray}
\left[\begin{array}{cc}
\bar P & (\bar{\mathbf{K}}-\bar{\mathbf{C}} ) ^\top  \\
 \bar{\mathbf{K}}-\bar{\mathbf{C}} & \ell^2 I_m
\end{array}\right]
\geq 0.
\label{eq:LMI2:bis}
\end{eqnarray}
Then the finite-dimensional system (\ref{finite:dim:sat:augmente}) is
locally exponentially stable in $0$ with a region of attraction given by
\begin{center}
${\cal A}_{\bar {P} } :=\{Z \in \R^{n + n_c} : \; Z ^\top \bar P Z \leq 1\}$. 
\end{center}

Moreover, the projection of the ellipsoid $\{Z : \; Z ^\top \bar P Z \leq
1\}$ onto $\R^n$ is larger than the region of attraction ${\cal A}$ given
by Proposition \ref{prop:Stabilization_FiniteDim_Sys_with_Saturation}
in \eqref{eq:Attraction_Region} for  system \eqref{eq:Systfini_saturated_FW}.

Finally, in ${\cal A}_{\bar {P} }$, the function $V_1$ defined by 
\[
\overline  V_1(Z)= Z ^\top \bar P Z, \quad  Z\in\R^{n+n_c},
\] 
decreases exponentially fast to $0$ along the solutions to (\ref{finite:dim:sat:augmente}),
i.e. there is a constant $\alpha>0$ so that for all $Z \in {\cal A}_{\bar {P} }$
\begin{eqnarray}
\dot {\overline {V_1}}(Z) \leq - \alpha |Z| ^2. 
\end{eqnarray}
\end{proposition}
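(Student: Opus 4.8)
The plan is to recognize that the augmented closed-loop system \eqref{finite:dim:sat:augmente}, written compactly as \eqref{finite:dim:sat:augmente_matrix_form}, has exactly the same structure as the static closed-loop system \eqref{eq:Systfini_saturated_FW}, with the triple $(\mathbf{A},\mathbf{B},\mathbf{K})$ replaced by the barred triple $(\bar{\mathbf{A}},\bar{\mathbf{B}},\bar{\mathbf{K}})$ from \eqref{def:bar:matrices} and the state dimension $n$ replaced by $n+n_c$. Correspondingly, the inequalities \eqref{eq:LMI1:bis} and \eqref{eq:LMI2:bis} are verbatim the inequalities \eqref{eq:LMI1} and \eqref{eq:LMI2} written for the barred data, with $\bar P$, $\bar{\mathbf{D}}$, $\bar{\mathbf{C}}$ in place of $P$, $\mathbf{D}$, $\mathbf{C}$. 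Thus the whole first part of the statement should follow by a direct application of Proposition~\ref{prop:Stabilization_FiniteDim_Sys_with_Saturation} to the augmented system.

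Before invoking that proposition I would check its hypotheses for the barred data. The only structural requirement used in the proof of Proposition~\ref{prop:Stabilization_FiniteDim_Sys_with_Saturation} is that $\bar{\mathbf{A}}+\bar{\mathbf{B}}\bar{\mathbf{K}}$ be Hurwitz. This is not assumed here but follows from \eqref{eq:LMI1:bis}: since the full block matrix is negative definite, its $(1,1)$ principal block $(\bar{\mathbf{A}}+\bar{\mathbf{B}}\bar{\mathbf{K}})^\top \bar P + \bar P(\bar{\mathbf{A}}+\bar{\mathbf{B}}\bar{\mathbf{K}})$ is negative definite, and as $\bar P>0$ the Lyapunov theorem yields that $\bar{\mathbf{A}}+\bar{\mathbf{B}}\bar{\mathbf{K}}$ is Hurwitz. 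With this in hand, Proposition~\ref{prop:Stabilization_FiniteDim_Sys_with_Saturation} applied to \eqref{finite:dim:sat:augmente_matrix_form} gives local exponential stability of $0$, the region of attraction ${\cal A}_{\bar P}=\{Z:\,Z^\top \bar P Z\leq 1\}$, and the exponential decay estimate $\dot{\overline{V_1}}(Z)\leq -\alpha|Z|^2$ on ${\cal A}_{\bar P}$ for the Lyapunov function $\overline{V_1}(Z)=Z^\top\bar P Z$. This settles every claim except the comparison of regions of attraction.

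For the comparison claim, I would simply invoke Remark~\ref{rem:Ellipsoids} with the matrix $P$ furnished by Proposition~\ref{prop:Stabilization_FiniteDim_Sys_with_Saturation} (the one defining ${\cal A}$ in \eqref{eq:Attraction_Region}). That remark states that the inequality \eqref{eq:Ellipsoids-inclusion}, which is a hypothesis here, is equivalent to the inclusion of the ellipsoid $\{z:\,z^\top P z\leq 1\}={\cal A}$ in the projection onto $\R^n$ of $\{Z:\,Z^\top\bar P Z\leq 1\}={\cal A}_{\bar P}$. Hence \eqref{eq:Ellipsoids-inclusion} is exactly the statement that the projection of ${\cal A}_{\bar P}$ onto $\R^n$ is no smaller than ${\cal A}$, which is what is asserted.

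Since every piece is a reduction to results already proved, I do not expect a genuine obstacle. The one point deserving care is the verification that the generalized sector condition and the Lyapunov computation in the proof of Proposition~\ref{prop:Stabilization_FiniteDim_Sys_with_Saturation} transfer unchanged to the barred data; this is immediate because that proof nowhere uses stabilizability of the pair beyond the Hurwitz property already established, nor any special structure of $\mathbf{A},\mathbf{B},\mathbf{K}$. A minor bookkeeping point is to keep clear which $P$ enters \eqref{eq:Ellipsoids-inclusion}, namely the static one defining ${\cal A}$, so that the comparison statement is meaningful.
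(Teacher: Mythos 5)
Your proposal is correct and follows exactly the route the paper takes: the paper derives this proposition by applying Proposition~\ref{prop:Stabilization_FiniteDim_Sys_with_Saturation} to the augmented system \eqref{finite:dim:sat:augmente_matrix_form} with the barred data, and obtains the comparison of regions of attraction from Remark~\ref{rem:Ellipsoids} via hypothesis \eqref{eq:Ellipsoids-inclusion}. Your additional observation that the Hurwitz property of $\bar{\mathbf{A}}+\bar{\mathbf{B}}\bar{\mathbf{K}}$ follows from the $(1,1)$ block of \eqref{eq:LMI1:bis} together with $\bar P>0$ is a sound way to close the one hypothesis the paper leaves implicit.
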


As in Section~\ref{sec:Numerical implementation}, the inequalities \eqref{eq:LMI1:bis} and \eqref{eq:LMI2:bis} can be reformulated as linear matrix inequalities.

\subsection{Lyapunov analysis of the closed-loop system under saturation control}


Now under the assumptions of Proposition \ref{prop:Stabilization_FiniteDim_Sys_with_Saturation}, we provide a
Lyapunov function $V:\ell_2(\N^*,\R)\to\R_+$ for the system (\ref{eq:Componentwise_equations_with_saturation_1_FW}). To this end,
we use the Lyapunov function $P$ provided by Proposition
\ref{prop:Stabilization_FiniteDim_Sys_with_Saturation} for the
finite-dimensional subsystem \eqref{eq:Systfini_saturated_FW}.
\begin{proposition}
\label{prop:Attraction_region_finite_dim_and_infinite_dim_Lyapunov} 
Consider system \eqref{eq:Componentwise_equations_with_saturation_1_FW} and
assume $\mathbf{K}$ is chosen such that the subsystem \eqref{eq:Systfini_saturated_FW} is locally
exponentially stable in $0$. Assume further that $P \in \R^{n \times n}$ is
symmetric positive definite and such that for $\tilde V(z) = z^\top P z, z \in
\R^{n}$ we have
$\dot {\tilde {V } } (z)\leq - \alpha | z |^2$ on the set ${\cal A}:=\{ z \in \R^n: \tilde V(z) \leq 1 \}$
along the solutions of \eqref{eq:Systfini_saturated_FW}. 
Then there exist $\gamma, C > 0$ such that for $Q: \ell_2 (\N^*, \R) \to
\R$ defined by $Q(w) := \sum_{j=n+1}^\infty w_j^2$ the function
\begin{equation}
\label{def:Lyapunov}
  V(w) := \tilde V(\pi_n w) + \gamma Q(w) = z^\top P z + \gamma Q(w)
\end{equation}
(with the identification $\pi_n w =z$) satisfies for all \linebreak $w \in {\cal A}
\times \ell_{2,j>n}$ that along the solutions of \eqref{eq:Componentwise_equations_with_saturation_1_FW}
\begin{equation*}
  \dot V(w)\leq - C \| w \|^2_{\ell_2}.
\end{equation*}
In particular, it follows that $S \times \ell_{2,j>n}$ is a region of attraction of
$0$ for system \eqref{eq:Componentwise_equations_with_saturation_1_FW}.
\end{proposition}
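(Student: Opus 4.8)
The plan is to differentiate $V(w) = \tilde V(\pi_n w) + \gamma Q(w)$ along solutions of \eqref{eq:Componentwise_equations_with_saturation_1_FW} and then to choose $\gamma>0$ small enough that the resulting estimate becomes negative definite. Writing $z=\pi_n w$ and $q=(w_j)_{j>n}$, so that $\|w\|_{\ell_2}^2 = |z|^2 + Q(w)$ with $Q(w)=\|q\|^2$, I would split $\dot V = \dot{\tilde V}(z) + \gamma\dot Q(w)$. The first term is controlled directly by the hypothesis: for $z\in\mathcal{A}$ we have $\dot{\tilde V}(z)\leq -\alpha|z|^2$. For the second term I compute $\dot Q(w) = \sum_{j=n+1}^\infty 2w_j(\lambda_j w_j + \mathbf{b}_j\cdot \sat(\mathbf{K}z))$ and bound its two pieces separately. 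Using $\lambda_j < -\eta$ for $j>n$ from \eqref{refeta}, the diagonal piece satisfies $\sum_{j>n}2\lambda_j w_j^2 \leq -2\eta Q(w)$, which supplies the coercivity in the $q$-direction. For the cross piece, writing $s:=\sat(\mathbf{K}z)$ and applying the Cauchy--Bunyakovsky--Schwarz inequality together with $\sum_{j>n} b_{jk}^2 \leq \|b_k\|_X^2$ (this is exactly where square-summability of the $b_{jk}$, i.e. $b_k \in X$, enters), I obtain $|\sum_{j>n} 2w_j(\mathbf{b}_j\cdot s)| \leq 2\|q\|\sum_{k=1}^m |s_k|\,\|b_k\|_X$. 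Since $|s| = |\sat(\mathbf{K}z)| \leq |\mathbf{K}z|\leq\|\mathbf{K}\||z|$, the cross term is bounded by $2C_1\|q\||z|$ for a constant $C_1$ depending only on $\mathbf{K}$ and the $\|b_k\|_X$.

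Combining these bounds, on $\mathcal{A}\times\ell_{2,j>n}$ I arrive at $\dot V(w)\leq -\alpha|z|^2 + 2\gamma C_1|z|\|q\| - 2\gamma\eta\|q\|^2$, a quadratic form in $(|z|,\|q\|)$ with matrix $\left(\begin{smallmatrix} -\alpha & \gamma C_1 \\ \gamma C_1 & -2\gamma\eta\end{smallmatrix}\right)$. Its determinant is $\gamma(2\alpha\eta - \gamma C_1^2)$, so the form is negative definite precisely when $\gamma \in (0, 2\alpha\eta/C_1^2)$; fixing any such $\gamma$ yields $\dot V(w)\leq -C\|w\|_{\ell_2}^2$ for some $C>0$, which is the claimed estimate.

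For the final ("in particular") assertion I would argue as follows. Because $\dot{\tilde V}(z) < 0$ on $\mathcal{A}\setminus\{0\}$ and $\mathcal{A}$ is a sublevel set of $\tilde V$, the set $\mathcal{A}$ is forward invariant for the $z$-subsystem \eqref{eq:Systfini_saturated_FW}; hence $\mathcal{A}\times\ell_{2,j>n}$ is forward invariant for the full system and the decay estimate for $\dot V$ holds along every trajectory starting there. Since $P>0$ and $\gamma>0$ give $c_1\|w\|_{\ell_2}^2\leq V(w)\leq c_2\|w\|_{\ell_2}^2$, the differential inequality $\dot V\leq -C\|w\|_{\ell_2}^2\leq -(C/c_2)V$ forces $w(t)\to 0$, so $\mathcal{A}\times\ell_{2,j>n}$ is a region of attraction; the statement for the (generally larger) region $S\times\ell_{2,j>n}$ then follows from Proposition~\ref{prop:Attraction_region_finite_dim_and_infinite_dim}(i).

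The main obstacle I anticipate is not the algebra above but the rigorous justification of the term-by-term differentiation $\dot Q(w) = \sum_{j>n}2w_j\dot w_j$ along mild solutions: the series defining $Q$ and its formal derivative must be shown to be differentiable and to converge uniformly enough on compact time intervals. I would handle this by first carrying out the computation for strong solutions (initial data in $D(A)$), for which each $w_j$ is continuously differentiable and the estimates are uniform, and then extending to general mild solutions by density, using the uniform-in-initial-data exponential bounds already available from Proposition~\ref{prop:Attraction_region_finite_dim_and_infinite_dim}.
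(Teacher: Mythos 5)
Your proof is correct and follows essentially the same route as the paper's: the same splitting $w=z+z^\bot$, the same computation of $\dot Q$ using $\lambda_j<-\eta$ and the Cauchy--Bunyakovsky--Schwarz inequality with the square-summability of the $b_{jk}$, and the same conclusion via smallness of $\gamma$ and invariance of ${\cal A}\times\ell_{2,j>n}$ coming from the cascade structure. The only differences are cosmetic (you certify negativity of the quadratic form in $(|z|,\|q\|)$ by a determinant condition where the paper uses Young's inequality with a parameter $\kappa$ chosen before $\gamma$) or additional rigor on your part (the paper differentiates $Q$ term by term without the strong-solution/density justification you sketch).
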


\begin{proof}
   We write $w\in \ell_2 (\N^*, \R)$ as $w=z + z^\bot$, where $z\in \R^n,
   z^\bot\in \ell_{2,j>n}$.
	Here we identify $\R^n$ with the sequences with
support in $\{1, \ldots,n \}$ and $\ell_{2,j>n}$ is the set of sequences
in $\ell_2(\N^*, \R)$ which are $0$ in the first $n$ entries. 
	
	This decomposition is unique.
   Due to (\ref{refeta}), we have along the solutions to (\ref{eq:Componentwise_equations_with_saturation_1_FW})
\begin{align*}
\dot Q(w) 
 = & 2\sum_{j=n+1}^\infty w_j(t)\dot{w}_j(t)\\
=& 2\sum_{j=n+1}^\infty w_j(t)\Big(\lambda_j w_j(t) + \mathbf{b}_j \cdot \sat(\mathbf{K}z(t))\Big)\\
\leq& -2\eta\sum_{j=n+1}^\infty w^2_j(t)  + 2\sum_{j=n+1}^\infty |w_j(t)| |\mathbf{b}_j| |\sat(\mathbf{K}z(t))|\\
\leq& -2\eta\sum_{j=n+1}^\infty w^2_j(t)  + 2\sum_{j=n+1}^\infty |w_j(t)| |\mathbf{b}_j| \|\mathbf{K}\||z(t)|.
\end{align*}
Cauchy-Bunyakovsky-Schwarz inequality implies that
\begin{align*}
\dot Q(w) \leq& -2\eta \|z^\bot(t)\|_{\ell_2}^2  + 2 \|z^\bot(t)\|_{\ell_2} \|b^\bot(t)\|_{\ell_2} \|\mathbf{K}\||z(t)|.
\end{align*}
Using Young's inequality we have for all $\kappa>0$ that $2
\|z^\bot(t)\|_{\ell_2}|z(t)| \leq \kappa \|z^\bot(t)\|_{\ell_2}^2 +
\frac{1}{\kappa}|z(t)|^2$. We proceed to:
\begin{align*}
\dot Q(w) \leq &- \big(2\eta-\kappa\|b^\bot\|_{\ell_2}  \|\mathbf{K}\|\big) \| z^\bot(t)\|^2_{\ell_2} +\frac{1}{\kappa}\|b\|_{\ell_2}  \|\mathbf{K}\| | z(t)|^2.
\end{align*}
Therefore for any choice of $\gamma$ in \eqref{def:Lyapunov}, we have by an application of Proposition
\ref{prop:Stabilization_FiniteDim_Sys_with_Saturation} along the solutions to
(\ref{eq:Componentwise_equations_with_saturation_1_FW}) that
\begin{align*}
\dot{V}(w) \leq & -\big(\alpha - \frac{\gamma}{\kappa}\|b^\bot\|_{\ell_2}  \|\mathbf{K}\|\big) | z(t)|^2\\
&\qquad\qquad- \gamma \big(2\eta-\kappa\|b^\bot\|_{\ell_2} \|\mathbf{K}\|\big) \| z^\bot(t)\|^2_{\ell_2}.
\end{align*}
Therefore selecting first $\kappa>0$ such that $2\eta-\kappa\|b^\bot\|_{\ell_2} \|\mathbf{K}\|>0$ and then selecting $\gamma>0$ such that 
$\alpha - \frac{\gamma}{\kappa}\|b^\bot\|_{\ell_2} \|\mathbf{K}\|>0$, we get the existence of $C>0$ such that
\begin{equation}
\label{dot:V:first:PDE}
\begin{array}{rcl}
\dot{V}(w)&\leq & -C | z(t)|^2 - C\| z^\bot(t)\|^2_{\ell_2}
\end{array}
\end{equation}
provided that $z(t)$ lies in the ellipsoid ${\cal A}$, whatever the value
of $z^\bot(t)$. As ${\cal A}\times \ell_{2,j>n}$ is an invariant subset of $\ell_2$ for a system (\ref{eq:Componentwise_equations_with_saturation_1_FW}), we obtain that $V$ is a Lyapunov function for (\ref{eq:Componentwise_equations_with_saturation_1_FW}), with a guaranteed region of attraction containing
${\cal A}\times \ell_{2,j>n}$. 
\end{proof}

Note that this provides an alternative proof of Proposition~\ref{prop:Attraction_region_finite_dim_and_infinite_dim}. We can also use Proposition \ref{prop:Stabilization_FiniteDim_Sys_with_Saturation:bis} and obtain a similar result for a dynamical controller.

\section{Numerical experiments}
\label{sec:Numerics}

In this section we use Proposition~\ref{prop:Stabilization_FiniteDim_Sys_with_Saturation_modified} to obtain estimates for a region of attraction for the unstable heat equation \eqref{closed-loop:sat} subject to a saturated scalar feedback controller.
Let $c(\cdot)$ in equation \eqref{closed-loop:sat} be a constant
function. With slight abuse of notation we will write $c(\cdot) = c = const$.

According to \cite[pp. 16-17]{Hen81} the eigenvalues of the operator
\begin{equation}
A:=\partial_{xx}+c\;\mathrm{id}: X \to X
\end{equation}
on the domain $D(A)=H^2(0,L)\cap H^1_0(0,L)$
are given by
\begin{eqnarray}
\lambda_j:= - \frac{\pi^2}{L^2} j^2 + c,\quad j\in\N^*,
\label{eq:EV_of_simple_A}
\end{eqnarray}
and the eigenfunctions $e_j$, $j\in\N^*$ of $(A,D(A))$, which form a basis of $L_2(0,1)$ are given by
\begin{eqnarray}
e_j(x):= \Big( \frac{2}{L}\Big)^{1/2} \sin\frac{j\pi x}{L},\quad j\in\N^*, \quad x\in(0,L).
\label{eq:Efunctions_of_simple_A}
\end{eqnarray}



Next we estimate the region of attraction of system \eqref{closed-loop:sat} for the following choice of parameters:
\[
c(x) \equiv 10,\quad L = 2, \quad \ell = 2,\quad b = e_1 + e_2,
\]
where the $e_j$ are defined in \eqref{eq:Efunctions_of_simple_A}.
For these parameters there are only 2 unstable eigenvalues. Using the eigenfunction decomposition of solutions of \eqref{closed-loop:sat} as in Section~\ref{sec:Start}, we see that the matrices $\mathbf{A}, \mathbf{B}$ defined in \eqref{eq:def:A1} have the form
\[
\mathbf{A} \approx
\begin{pmatrix}
7.5325989 &  0       \\       	
0        &  0.1303956
\end{pmatrix}
,
\qquad
\mathbf{B} = 
\begin{pmatrix}
1  \\       	
1  
\end{pmatrix}
.
\]
As the diagonal entries in the matrix $\mathbf{A}$ are distinct, and all
the components of $\mathbf{B}$ are nonzero, the 
pair $(\mathbf{A},\mathbf{B})$ is stabilizable.
Different choices of the matrix $\mathbf{K}$ for the stabilizing feedback
$u(t)=\mathbf{K}z(t)$ lead to different attraction rates and different
regions of attraction. We demonstrate this with two examples.

\begin{figure*}%
\centering	
\IfFileExists{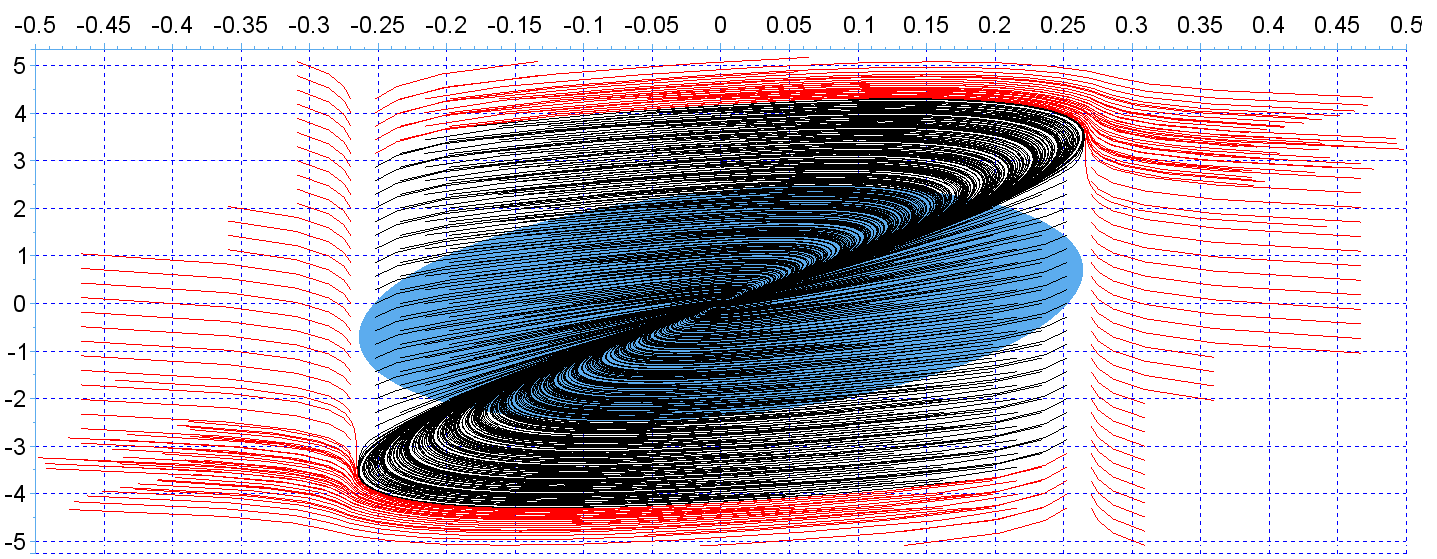}{%
\includegraphics[scale=0.4]{Simulation_b_is_1_1_ell_attr_diverg_D_0_01_poles_-1_black_and_red_cropped.png}%
}
{Fabian does not have this particular figure} 
\caption{Region of attraction of \eqref{eq:Systfini_saturated_FW} (in blue) for the choice \eqref{eq:Kmatr_Sim1},
  \eqref{eq:PCD_Sim1}, computed via the LMI technique. Trajectories
  of \eqref{eq:Systfini_saturated_FW} are computed by direct solution of the ODE,
  trajectories attracted to the origin are in black, diverging
  trajectories are in red.}%
\label{fig:Attraction_Region_Traj_Ell_Diverg_Sim1}%
\end{figure*}

\begin{figure*}%
\centering
\hspace{-2mm}\includegraphics[scale=0.4]{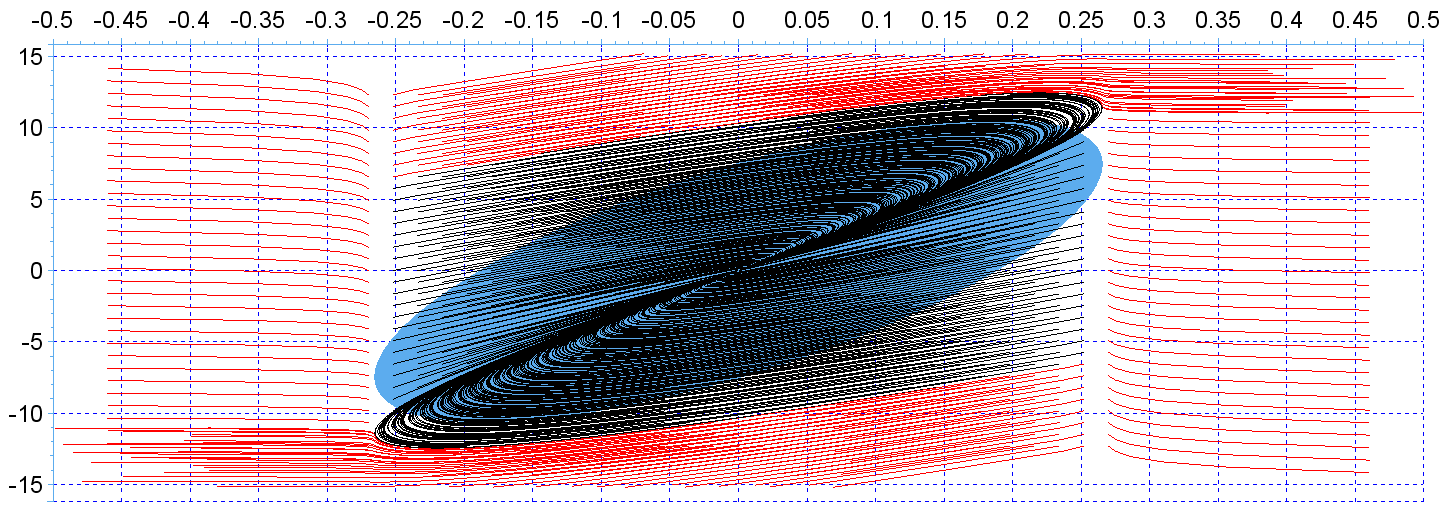}%
\caption{Region of attraction of \eqref{eq:Systfini_saturated_FW} (in blue) for the choice
  \eqref{eq:Kmatr_Sim2}, \eqref{eq:PCD_Sim2}, computed via the LMI
  technique. Trajectories of \eqref{eq:Systfini_saturated_FW} are computed by direct
  solution of the ODE, trajectories attracted to the origin are in black,
  diverging trajectories are in red.}%
\label{fig:Attraction_Region_Traj_Ell_Diverg_Sim2}
\end{figure*}

\subsection{Choice 1: Placing the poles at $(-1,-1)$}
\label{sec:Poles-1}

First we choose the matrix $\mathbf{K}$ so that $\sigma(\mathbf{A} + \mathbf{B}\mathbf{K}) = \{-1\}$, which results in 
\begin{eqnarray}
\mathbf{K} \approx
\begin{pmatrix}
-9.835618  & 0.1726235
\end{pmatrix}
.
\label{eq:Kmatr_Sim1}
\end{eqnarray}
To estimate a region of attraction of \eqref{eq:Systfini_saturated_FW} by Proposition~\ref{prop:Stabilization_FiniteDim_Sys_with_Saturation_modified}, we proceed in two steps.
\begin{itemize}
	\item[(i)] First we solve the inequality
          \eqref{eq:LMI1_modified} together with the additional constraints: $\tilde{P}-{\tilde{P}}^\top=0$ and ${\tilde{P}}>0$.
Additionally, we impose an optimality condition for $\mathbf{C}$:
\begin{eqnarray}
(\mathbf{K}-\mathbf{C})\cdot(\mathbf{K}-\mathbf{C})^\top \to \min,
\label{eq:Additional_Optimisation_Condition}
\end{eqnarray}
where $\cdot$ is the scalar product of vectors.

	\item[(ii)] The idea behind \eqref{eq:Additional_Optimisation_Condition}
 is to minimize the off-diagonal elements of the matrix $\tilde{M}_2$, which gives us at the second step 
the possibility to find large $\mathbf{D}$ (optimal for the given $\tilde{P}$, $\mathbf{C}$) satisfying the bilinear matrix inequality \eqref{eq:LMI2_modified}.
\end{itemize}

This algorithm is implemented in Scilab. For solution of the LMI \eqref{eq:LMI1_modified} the LMITOOL package has been used.
The resulting matrices $\tilde{P}, \mathbf{C}$ and the real number
$\mathbf{D}$ are (approximately):
\begin{subequations}
\begin{align}
\tilde{P} =& 
\begin{pmatrix}
   2.1277468 & -0.0655569 \\
  -0.0655569 &  0.0243008
\end{pmatrix},\\
\mathbf{C} =& (  -2.0635579,  0.0844904)
,\quad
 \mathbf{D} =  7.359375.
\end{align}
\label{eq:PCD_Sim1}
\end{subequations}


Figure~\ref{fig:Attraction_Region_Traj_Ell_Diverg_Sim1} shows an elliptic region of attraction \eqref{eq:Attraction_Region},
subject to $\tilde{P}, \mathbf{C}, \mathbf{D}$ given by \eqref{eq:PCD_Sim1} (in
blue).
Furthermore, in the same figure some trajectories are depicted
obtained through direct simulation of \eqref{eq:Systfini_saturated_FW}. The
trajectories in black tend to the origin while those in
red are diverging. 
This provides an approximation of the maximal region of attraction of \eqref{eq:Systfini_saturated_FW}. 
It can be seen that in one direction the ellipsoid obtained by our method
approximates the actual region of attraction very well, but the results
are not tight in the orthogonal direction.

Using Theorem~\ref{theo:satstab-infdim}, we obtain an approximation of the region of attraction of the PDE \eqref{closed-loop:sat} subject to the feedback \eqref{eq:Stabilizing_controller}.

%

\begin{remark}
\label{rem:Optimality_Conditions} 
\textbf{(Importance of the optimality conditions)} Different solutions
$\tilde{P}$, $\mathbf{C}$, $\mathbf{D}$ of the matrix inequalities
\eqref{eq:LMI1_modified}, \eqref{eq:LMI2_modified} lead to very different
estimates of a region of attraction \eqref{eq:Attraction_Region} of the
model \eqref{eq:Systfini_saturated_FW}.  Thus, it is important to pick solutions
resulting in as large regions of attraction as possible. Here we chose a solution satisfying the optimality condition
\eqref{eq:Additional_Optimisation_Condition}.  Enforcing further
optimality conditions may provide different estimates. The union of
regions of attractions is again a region of attraction and so the maximal
region can be explored further by solving
\eqref{eq:LMI1_modified}, \eqref{eq:LMI2_modified} for different
optimality conditions.

In the two dimensional case, one option is to fix the eigendirections
of $P$ and to optimize the eigenvalues to get an idea of the extension of
the maximal domain of attraction in particular directions.
Other size criteria exist for optimization of domains of attraction, as volume maximization and trace minimization. See \cite[Section 2.2.5]{TGS11} for more details and a complete introduction of such optimization problems.
\end{remark}

\begin{remark}
\label{rem:Computational_Costs} 
\textbf{(Computational cost)} The time needed to solve the problem was (on a system with the specs: Intel(R) Core(TM) i5-3317U \@ 1.70GHz, 16 GB RAM, Windows 10):
\begin{itemize}
	\item Finding $\tilde{P}$, $\mathbf{C}$, $\mathbf{D}$ via LMIs: 0.0166561 seconds.
	\item Plotting the obtained region: 0.0071209 seconds.
	\item Time for solving the ODE \eqref{eq:Systfini_saturated_FW} for $31^2=961$ distinct initial conditions on the time-interval $[0,10]$ on a grid consisting of 100 points and for plotting the resulting trajectories: 43.359664 seconds.
\end{itemize}
This shows the computational efficiency of our method.
\end{remark}

\subsection{Choice 2: Putting the poles to $\{-0.1,-0.2\}$}
\label{sec:Poles_-01_-02}

Now let us choose the matrix $\mathbf{K}$ so that $\sigma(\mathbf{A} + \mathbf{B}\mathbf{K}) = \{-0.1,-0.2\}$. This choice makes the attraction rate of the closed-loop system much slower, than in the previous simulation. This has however, some advantages, as we will see next. The resulting matrix $\mathbf{K}$ is:
\begin{eqnarray}
\mathbf{K} = 
\begin{pmatrix}
-7.9732782 &  0.0102837
\end{pmatrix}
.
\label{eq:Kmatr_Sim2}
\end{eqnarray}

As in the previous simulation, we solve the LMI \eqref{eq:LMI1_modified} subject to the additional optimality condition
\eqref{eq:Additional_Optimisation_Condition}. The corresponding matrices $\tilde{P}, \mathbf{C}, \mathbf{D}$ are:
\begin{subequations}
\begin{align}
\tilde{P} =& 
\begin{pmatrix}
   0.3108695 & -0.0054849\\
  -0.0054849 &  0.000195 
\end{pmatrix},	\\
\mathbf{C} =& (-0.3053879, 0.0054754)
,\quad
\mathbf{D} =  90.625.
\end{align}
\label{eq:PCD_Sim2}
\end{subequations}
As we see, with the choice of the stabilizing feedback \eqref{eq:Kmatr_Sim2}, the region of attraction becomes significantly larger, although at the cost of reducing the rate of convergence of the trajectories to the origin.
Furthermore, the choice of the matrices \eqref{eq:PCD_Sim2} leads to a
better estimate of the region of attraction, in comparison to the
situation in Section~\ref{sec:Poles-1}. This can be seen by comparing the Figures~\ref{fig:Attraction_Region_Traj_Ell_Diverg_Sim1}
and \ref{fig:Attraction_Region_Traj_Ell_Diverg_Sim2}.

\begin{remark}
\label{rem:Computational_Costs_2} 
\textbf{(Computational costs)} For this problem the elapsed time is (on a system with the specs: Intel(R) Core(TM) i5-3317U \@ 1.70GHz, 16 GB RAM, Windows 10)
\begin{itemize}
	\item Finding $\tilde{P}$, $\mathbf{C}$, $\mathbf{D}$ via LMIs: 0.018131 seconds.
	\item Plotting the obtained region: 0.0129341 seconds.
	\item Time for solving the ODE \eqref{eq:Systfini_saturated_FW} for $31^2=961$ distinct initial conditions on the time-interval $[0,60]$ on a grid consisting of 600 points and for the plotting of the resulting trajectories: 91.802833 seconds.
\end{itemize}
We have chosen a longer time-span for solution of the ODE \eqref{eq:Systfini_saturated_FW}, since in this simulation the attraction rate of the closed loop system is much slower than in the previous simulation.
Again, we obtain a considerable approximation of the region of attraction in a computationally efficient way.
\end{remark}

%
%

\vspace{-2mm}
 
\section{Extensions}
\label{sec:extens-obta-results}

\subsection{Pointwise saturations}
\label{sec:Estim_attr_region_infdim_sys_b_times_u}

The type of the saturation which we have considered until now,
i.e. component-wise saturation
of finite-dimensional vectors,  is not the only type of saturation
functions, which appears in engineering practice. A general class of
saturation functions has been considered in \cite{MCP17}.

\subsubsection{Saturation functions}

Various definitions for the saturation map exist.
Normwise saturations limit the norm of the input $u$, i.e. for a given
norm $\|\cdot \|$ on $\R^n$ we may consider
\begin{equation}
\label{sat:norm}
\sat_{\|\cdot \|}(u)\ {:=}
\begin{cases}
u &, \text{ if } \|u\|\leq \ell \\ 
\ell\frac{u}{\| u\|} \hspace{-3mm}&, \text{ if } \| u \|\geq \ell
\end{cases}
 = 
\ell\min\left\{\frac{1}{\ell},\frac{1}{\|u \|}\right\}u.
\end{equation}
Another physically motivated saturation map is the following (pointwise) $L_\infty$  saturation map, defined for all $x \in (0,L)$ by
\begin{equation}
\label{sat:linfinity_saturation}
\begin{split}
\sat_{\infty}(u)(x)
:=&
\begin{cases}
u(x) &, \text{ if } |u(x)|\leq \ell \\ 
\ell\frac{u(x)}{|u(x)|} &, \text{ if } | u(x) |\geq \ell.
\end{cases}\\
 =&
\ell \min\left\{\frac{1}{\ell},\frac{1}{|u(x)|}\right\}u(x).
\end{split}
\end{equation}
In this section, we depart from the saturation model in
(\ref{closed-loop:sat}) and study instead a heat equation with
pointwise saturation in each input channel:
\begin{equation}\label{second:heat:eq}
w_t(t,x){=}w_{xx}(t,x)+c(x)w(t,x)+ \sum_{k=1}^m \sat(b_k(x)u_k(t)).
\end{equation}
In a certain sense, in the equation \eqref{second:heat:eq} the whole terms
$u_kb_k$ are considered as the input which saturates pointwise. The
assumptions on the domain of the problem and the functions $c, b_k$ are
the same as in Section~\ref{sec:Start}. 

In order to stabilize \eqref{second:heat:eq}, we are going to use the same
stabilizing control \eqref{eq:Stabilizing_controller}. We now aim to
provide an estimate for a region of attraction for (\ref{second:heat:eq}). 
So we assume that $\mathbf{K}$ as in \eqref{eq:Stabilizing_controller} is
given and we consider the closed-loop system
\begin{multline*}
\hspace{-3mm}
 w_t(t,x)= w_{xx}(t,x)+c(x)w(t,x)+ 
\sum_{k=1}^m
 \sat\big(b_k(x)\left(\mathbf{K}z(t)\right)_k\big).      
\end{multline*}

Representing this equation in the basis $e_j$, $j\in\N^*$
as in Section~\ref{sec:Start}, we obtain the equations for the coordinates
\begin{equation}
\begin{split}
\dot{w}_j(t)=& \lambda_j w_j(t)  + \sum_{k=1}^m\Big\langle e_j,
\sat_{\infty}\big(b_k(\cdot)\left(\mathbf{K}z(t)\right)_k\big)\Big\rangle
,\ \ j\in\N^*.
\end{split}
\label{eq:Componentwise_equations_with_saturation_1_pointwise_saturations}
\end{equation}

Let us state the following simple result that will be instrumental to bound the differences
\[
\Delta\big( \left(\mathbf{K}z(t)\right)_k, b_k(\cdot)\big):= b_k(\cdot)\sat\big(\mathbf{K}z(t)_k\big)-\sat_{\infty}\big(b_k(\cdot)\mathbf{K}z(t)_k\big).
\]

\begin{lemma}\label{lemma:Delta}
Let $r,k \in \R$ and denote $\Delta(r,k) = r\sat(k)- \sat(rk) $. Then
\begin{equation}
    \label{eq:2}
    |k| \leq \ell \mbox{ and } |rk | \leq \ell \quad \Rightarrow \quad
    \Delta(r,k) = 0.
\end{equation}
\begin{equation}
    \label{eq:3}
    |\Delta(r,k) | \leq \ell ( 1 + |r|).
\end{equation}

Let $k \in \R$ and $b \in L_2(0,L)$ be given. Then
\begin{equation}\label{first:ineq:ell:2}
|k| \leq \ell \mbox{ and } \|b(\cdot) k \|_\infty \leq \ell \quad \Rightarrow \quad \Delta (b(\cdot),k) \equiv 0 \mbox{ a.e. }
\end{equation}
Moreover
\begin{equation}\label{third:ineq:ell}
\|b(\cdot)\sat(k)-\sat_{\infty}(b(\cdot)k)\|_{2}  \leq \ell ( \|1+  |b|\|_2).
\end{equation}
If $|k| \leq \ell$ and $\chi$ is the characteristic function of the set \linebreak
$U:=\{ x \in [ 0, L ]: |kb(x) | > \ell \}$ then 
\begin{equation}\label{fourth:ineq:ell}
\|b(\cdot)\sat(k)-\sat_{\infty}(b(\cdot)k)\|_{2}  \leq \ell ( \| \chi+  |b \chi|\|_2).
\end{equation}
If $b(\cdot)$ is essentially bounded, then
\begin{equation}\label{second:ineq:ell}
\|b(\cdot)\sat(k)-\sat_{\infty}(b(\cdot)k)\|_\infty \leq \ell ( 1+ \| b\|_\infty),
\end{equation}
where $\|b\|_\infty$ denotes the $L_\infty$ norm of the function $b$. 
\end{lemma}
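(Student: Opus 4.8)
The plan is to prove the two scalar claims \eqref{eq:2} and \eqref{eq:3} first, and then obtain all of the function-valued estimates by applying these bounds pointwise in $x$ (with the role of the scalar $r$ played by $b(x)$) and integrating or taking suprema as appropriate.

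For \eqref{eq:2} I would simply note that $|k|\leq\ell$ forces $\sat(k)=k$ and $|rk|\leq\ell$ forces $\sat(rk)=rk$ by \eqref{sat:finite:dim}, so that $\Delta(r,k)=rk-rk=0$. For \eqref{eq:3} I would combine the triangle inequality with the elementary bound $|\sat(v)|\leq\ell$ for every $v\in\R$ (immediate from \eqref{sat:finite:dim}), giving $|\Delta(r,k)|\leq|r|\,|\sat(k)|+|\sat(rk)|\leq|r|\ell+\ell=\ell(1+|r|)$.

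The function-valued statements rest on a single observation: by \eqref{sat:linfinity_saturation} one has $\sat_\infty(b(\cdot)k)(x)=\sat(b(x)k)$ for a.e. $x$, so the pointwise value at $x$ of the difference $b(\cdot)\sat(k)-\sat_\infty(b(\cdot)k)$ equals $\Delta(b(x),k)$. Then \eqref{first:ineq:ell:2} follows from \eqref{eq:2}, since $|k|\leq\ell$ together with $\|b(\cdot)k\|_\infty\leq\ell$ gives $|b(x)k|\leq\ell$ a.e., whence $\Delta(b(x),k)=0$ a.e. The global bound \eqref{third:ineq:ell} follows by squaring the pointwise estimate \eqref{eq:3} and integrating, $\|b(\cdot)\sat(k)-\sat_\infty(b(\cdot)k)\|_2^2=\int_0^L|\Delta(b(x),k)|^2\,dx\leq\ell^2\int_0^L(1+|b(x)|)^2\,dx$, and taking square roots.

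The last two refinements follow the same pattern. For \eqref{fourth:ineq:ell}, under $|k|\leq\ell$ the equality case \eqref{eq:2} shows $\Delta(b(x),k)=0$ off $U=\{x:|kb(x)|>\ell\}$, so the pointwise bound sharpens to $|\Delta(b(x),k)|\leq\ell(1+|b(x)|)\chi(x)$ and taking the $L_2$ norm yields $\ell\,\|\chi+|b\chi|\|_2$. For \eqref{second:ineq:ell}, if $b$ is essentially bounded I would apply \eqref{eq:3} pointwise and pass to the essential supremum, obtaining $\|b(\cdot)\sat(k)-\sat_\infty(b(\cdot)k)\|_\infty\leq\ell\,\|1+|b|\|_\infty\leq\ell(1+\|b\|_\infty)$. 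I do not expect a genuine obstacle: the only point that needs care is the almost-everywhere identification of the pointwise saturation $\sat_\infty$ with the scalar $\sat$ and the attendant a.e. bookkeeping, which is routine once the two scalar inequalities are established.
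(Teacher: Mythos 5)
Your proposal is correct and follows essentially the same route as the paper's proof: establish the two scalar estimates \eqref{eq:2} and \eqref{eq:3} exactly as you do, then obtain all function-valued claims by applying them pointwise (the paper handles \eqref{fourth:ineq:ell} by applying \eqref{third:ineq:ell} to $b\chi$ after noting the complement of $U$ contributes nothing, which is the same observation as your sharpened pointwise bound $|\Delta(b(x),k)|\leq\ell(1+|b(x)|)\chi(x)$). Your write-up is somewhat more explicit about the a.e.\ identification $\sat_\infty(b(\cdot)k)(x)=\sat(b(x)k)$ and the integration step, but the substance is identical.
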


\begin{proof}
The first claim follows as the assumption guarantee that $r\sat(k) =
\sat(rk) = rk$. The claim in \eqref{eq:3} is a direct consequence of the
triangle inequality as
\begin{equation*}
|r\sat(k)- \sat(rk)| \leq |r||\sat(k)| + |\sat(rk)|.
\end{equation*}
The remaining claims follow immediately by applying the pointwise
estimates \eqref{eq:2} and \eqref{eq:3}. The claim \eqref{fourth:ineq:ell}
follows by applying \eqref{third:ineq:ell} to $b\chi$, after noting that
the complement of $U$ does not contribute to the norm of the left hand side.
\end{proof}


\begin{proposition}
    \label{prop:pointwise-sat}
    Consider system \eqref{second:heat:eq} and assume all functions
    $b_k(\cdot) \in L_\infty([ 0, L ])$, $k=1,\ldots,m$.  		
Consider also the
    associated system \eqref{eq:Systfini_saturated_FW} with a stabilizable pair $(\mathbf{A},\mathbf{B}) \in \R^{n\times n}\times\R^{n\times m}$ and let $\mathbf{K}\in\R^{m\times n}$ be such that $\mathbf{A}+\mathbf{B}\mathbf{K}$ is Hurwitz.
Choose a symmetric positive definite matrix
$P\in\mathbb{R}^{n\times n}$ as in Proposition~\ref{prop:Attraction_region_finite_dim_and_infinite_dim_Lyapunov}.
		Consider the Lyapunov function candidate $V$ defined in
    (\ref{def:Lyapunov}).



Then there exist $\alpha, \beta >0$ such that if for $w = z + z^{\perp}
\in \ell_2 (\N^*, \R)$ we have $z\in {\cal A}_\beta := \{z: \; z^\top Pz\leq \beta\}$ then
\begin{equation}
\label{eq:pointwise-lyap-decay}
\dot V(w) \leq -\frac{\alpha}{2} |w|^2.
\end{equation}    
In particular, $\{z: \; z^\top Pz\leq \beta\}\times X_n^\bot$ is a region
of attraction for system \eqref {eq:Componentwise_equations_with_saturation_1_pointwise_saturations}. 
\end{proposition}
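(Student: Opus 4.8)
The plan is to treat the pointwise-saturated dynamics \eqref{eq:Componentwise_equations_with_saturation_1_pointwise_saturations} as a perturbation of the distributed-saturation dynamics \eqref{eq:Componentwise_equations_with_saturation_1_FW}, whose Lyapunov decay has already been quantified in Proposition~\ref{prop:Attraction_region_finite_dim_and_infinite_dim_Lyapunov}. First I rewrite each modal equation of \eqref{eq:Componentwise_equations_with_saturation_1_pointwise_saturations}. Setting $\Delta:=\sum_{k=1}^m \Delta\big((\mathbf{K}z)_k,b_k(\cdot)\big)\in L_2(0,L)$ with $\Delta(\cdot,\cdot)$ as defined before Lemma~\ref{lemma:Delta}, and using $\sat_\infty(b_k(\mathbf{K}z)_k)=b_k\sat((\mathbf{K}z)_k)-\Delta((\mathbf{K}z)_k,b_k(\cdot))$, the input to the $j$-th equation becomes $\mathbf{b}_j\cdot\sat(\mathbf{K}z)-\langle e_j,\Delta\rangle$. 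Hence the pointwise system agrees with \eqref{eq:Componentwise_equations_with_saturation_1_FW} up to the additive perturbation $-\langle e_j,\Delta\rangle$ in each coordinate, and I introduce $\delta\in\R^n$ with $\delta_i:=\langle e_i,\Delta\rangle$, $i=1,\dots,n$.

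Next I differentiate $V$ from \eqref{def:Lyapunov} along the pointwise trajectories. Since the splitting of the input is exact, I obtain $\dot V(w)=\dot V_{\mathrm{dist}}(w)+R(w)$, where $\dot V_{\mathrm{dist}}$ is the derivative computed for the distributed vector field and
\[
R(w)=-2\,z^\top P\,\delta-2\gamma\sum_{j>n}w_j\langle e_j,\Delta\rangle .
\]
By Proposition~\ref{prop:Attraction_region_finite_dim_and_infinite_dim_Lyapunov} one has, as a pointwise inequality on the function $\dot V_{\mathrm{dist}}$, that $\dot V_{\mathrm{dist}}(w)\leq -C\|w\|_{\ell_2}^2$ on $\mathcal{A}\times\ell_{2,j>n}$ for some $C>0$. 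For the remainder, the Cauchy--Bunyakovsky--Schwarz inequality together with Bessel's inequality $|\delta|^2+\sum_{j>n}|\langle e_j,\Delta\rangle|^2\leq\|\Delta\|_{L_2}^2$ yields $|R(w)|\leq C_1\|w\|_{\ell_2}\|\Delta\|_{L_2}$ with $C_1:=2(\|P\|+\gamma)$.

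The decisive step is to force $\Delta$ to vanish on a neighborhood of the origin. On $\mathcal{A}_\beta=\{z:\;z^\top Pz\leq\beta\}$ one has $|(\mathbf{K}z)_k|\leq\|\mathbf{K}\|\,|z|\leq\|\mathbf{K}\|\sqrt{\beta/\lambda_{\mathrm{min}}(P)}$ for every $k$, so I may pick $\beta>0$ small (in particular $\beta\leq 1$, so that $\mathcal{A}_\beta\subset\mathcal{A}$) such that $|(\mathbf{K}z)_k|\leq\ell$ and $\|b_k(\cdot)(\mathbf{K}z)_k\|_\infty=|(\mathbf{K}z)_k|\,\|b_k\|_\infty\leq\ell$ hold simultaneously for all $k$ and all $z\in\mathcal{A}_\beta$; the hypothesis $b_k\in L_\infty([0,L])$ is precisely what makes the second bound enforceable. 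Implication \eqref{first:ineq:ell:2} of Lemma~\ref{lemma:Delta} then gives $\Delta\equiv 0$ a.e., whence $R(w)=0$ and $\dot V(w)\leq -C\|w\|_{\ell_2}^2$ for every $w=z+z^\bot$ with $z\in\mathcal{A}_\beta$ and $z^\bot\in\ell_{2,j>n}$ arbitrary. This is \eqref{eq:pointwise-lyap-decay} with $\alpha:=2C$. As $\mathcal{A}_\beta$ is a sublevel set of the strict Lyapunov function $z\mapsto z^\top Pz$ it is forward invariant for the $z$-subsystem, so exactly the cascade/invariance argument of Proposition~\ref{prop:Attraction_region_finite_dim_and_infinite_dim_Lyapunov} shows that $\{z:\;z^\top Pz\leq\beta\}\times\ell_{2,j>n}$ is a region of attraction for \eqref{eq:Componentwise_equations_with_saturation_1_pointwise_saturations}.

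I expect the main obstacle to be the gap between the two smallness conditions hidden in \eqref{first:ineq:ell:2}. Unlike the distributed case, where the generalized sector condition tolerates an active saturation throughout $\mathcal{A}$, the pointwise saturation obliges me to restrict to the strictly smaller set on which no channel saturates pointwise, and this set degrades as $\max_k\|b_k\|_\infty$ grows. Should one wish to enlarge $\mathcal{A}_\beta$ past this set, $\Delta$ no longer vanishes; one would then have to retain the term $C_1\|w\|_{\ell_2}\|\Delta\|_{L_2}$, bound $\|\Delta\|_{L_2}$ through the sharper estimate \eqref{fourth:ineq:ell}, and verify that $\|\Delta\|_{L_2}=o(\|w\|_{\ell_2})$ as $w\to 0$ so as to absorb $R$ into $-\frac{\alpha}{2}\|w\|_{\ell_2}^2$. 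Establishing this quantitative smallness of the pointwise defect is the delicate point that ultimately limits the guaranteed region of attraction here.
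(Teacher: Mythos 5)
Your proof is correct and takes essentially the same route as the paper: the same add-and-subtract decomposition of the pointwise dynamics into the distributed-saturation dynamics plus the defect $\Delta$, the same use of the decay estimate from Proposition~\ref{prop:Attraction_region_finite_dim_and_infinite_dim_Lyapunov}, the same invocation of \eqref{first:ineq:ell:2} (enabled by $b_k\in L_\infty$) to annihilate $\Delta$ on a sufficiently small sublevel set ${\cal A}_\beta$, and the same invariance argument for ${\cal A}_\beta\times X_n^\bot$. The only cosmetic difference is that you exploit $\Delta\equiv 0$ to make the remainder vanish exactly (and bound it via $\|\Delta\|_{L_2}$ and Bessel's inequality), whereas the paper keeps a Young-inequality absorption step and phrases the consequence of \eqref{first:ineq:ell:2} as the quadratic bound $\|\Delta(t)\|_\infty^2\leq \mathrm{const}\cdot|z(t)|^2$.
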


\begin{proof}
First we rewrite \eqref{second:heat:eq} by adding and subtracting the term
$\sum b_k(x) \sat(u_k(t))$ to obtain
\begin{multline}
w_t(t,x)=w_{xx}(t,x)+c(x)w(t,x)+ \sum_{k=1}^m b_k(x) \sat(u_k(t)) \\
 + \sum_{k=1}^m\Big(\sat\big(b_k(x)u_k(t)\big) - b_k(x)\sat\big(u_k(t)\big) \Big).
\end{multline}

Define 
\begin{multline}
    \hspace{-4mm}\Delta(t):= \sum_{k=1}^m \sat_{\infty}\big(b_k(\cdot)\left(\mathbf{K}z(t)\right)_k\big) - b_k(\cdot) \sat (\mathbf{K}z(t)_k).
\end{multline}
For $j \in \N^*$ we define
    $y_j(t):=  \langle e_j, \Delta(t) \rangle$, and let  $y(t)\in\R^n$ be the vector with components $y_j(t)$, for $j=1,\ldots,n$.

Considering the Lyapunov function $V$ defined in (\ref{def:Lyapunov}), we compute its time-derivative along the solutions to 
\eqref{eq:Componentwise_equations_with_saturation_1_pointwise_saturations}.
Using (\ref{dot:V:first:PDE}), we get for all $w=z+z^\bot$ with
$z^\top P z \leq 1$ that
\begin{multline}
\dot V(w) \leq -C  |z(t)|^2
- C\| z^\bot(t)\|^2_{\ell_2}+ 2 z(t)^\top P y(t)  
\\
+ 2 \gamma \sum_{j=n+1}^\infty w_j(t) y_j(t).
\label{first:V:second:heat:eq}
\end{multline}


Using \eqref{third:ineq:ell} from Lemma~\ref{lemma:Delta}, we obtain along the solutions to (\ref{second:heat:eq}) and as long as $z^\top Pz \leq 1$,
\begin{align*}
\dot V(w) \leq& -C  |z(t)|^2 - C\| z^\bot(t)\|^2_{\ell_2}+ 2\lambda_{\max} |z(t)| \|\Delta(t)\|_\infty \\
&\qquad\qquad\qquad + 2 \gamma \| z^\bot(t)\|_{\ell_2} \|\Delta(t)\|_\infty,
\end{align*}
where $\lambda _{\max}$ denotes the maximal eigenvalue of the matrix $P$. Therefore for any positive values $\kappa$ and 
$\kappa'$,
\begin{align*}
\dot V(w) \leq& -\big(C- \frac{\lambda_{\max}}{\kappa}\big)  | z(t)|^2
- (C-\frac{\gamma}{\kappa'})\| z^\bot(t)\|^2_{\ell_2}\\
&\qquad\qquad\qquad  +(\lambda_{\max}\kappa   +\gamma \kappa' ) \|\Delta(t)\|_\infty ^2.
\end{align*}
Pick $\kappa>0$ and $\kappa'>0$ such that $C- \frac{\lambda_{\max}}{\kappa}> \frac{3C}{4}$ and $C-\frac{\gamma}{\kappa'}> \frac{C}{2}$. Due to (\ref{first:ineq:ell:2}), there exists $\beta>0$, such that for all $z$ in $\{z: \; z^\top Pz\leq \beta\}$,  $\|\Delta(t)\|_\infty ^2\leq \frac{C}{4(\lambda_{\max}\kappa   +\gamma \kappa' )}| z(t)|^2$. We get, for all solutions to (\ref{second:heat:eq}), as long as $z(t)$ is in $\{z: \; z^\top Pz\leq \beta\}$, 
\begin{equation}\label{lyapunov:second:heat:eq}
\dot V (w) \leq -\frac{C}{2} |z(t)|^2-\frac{C}{2}
\| z^\bot(t)\|^2_{\ell_2}
\end{equation}
Moreover, we have, along the solutions to (\ref{second:heat:eq}),
\begin{equation}
\label{sys-dim-infinie:second:heat:eq}
\dot{w}_j(t)=\lambda_jw_j(t)+\big\langle \sat( b K z(t)),e_j\big\rangle ,\qquad j=1,  2, \ldots
\end{equation}
therefore, considering $V_1$ as previously defined, we may check that
\eqref{eq:pointwise-lyap-decay} holds
following the same computation as for $\dot V$ along the solutions to
(\ref{second:heat:eq}), and using the fact the dynamics
(\ref{sys-dim-infinie:second:heat:eq}) in $X_n$ does not depend on the
component in $X_n^\bot$. 
Therefore the set $\{z: \; z^\top Pz\leq \beta\}\times X_n^\bot$ is invariant along the dynamics to (\ref{second:heat:eq}). With (\ref{lyapunov:second:heat:eq}), we get that $\{z: \; z^\top Pz\leq \beta\}\times X_n^\bot$ is a region of attraction and $V$ is a Lyapunov function.    
\end{proof}

\subsection{Applications to boundary control of heat equation subject to control saturations}
\label{sec:Boundary_control}

%

Let us now start from a heat equation with a dynamical boundary condition
\begin{subequations}
\label{boundary:control:4-sept}
\begin{align}
\hspace{-4mm}y_t(t,x) =&\ y_{xx}(t,x) + c(x) y(t,x), \  t\geq 0, \; x\in (0,L),\\
y(t,0)=&\ 0 ,\; y(t,L)= y_d , \  t\geq 0,
\end{align}
\end{subequations}
where $y_d$ is the (scalar) output of finite-dimensional dynamical system
given by
\begin{subequations}
\label{finite:control:4-sept}
\begin{align}
\dot x_d =& A_d x_ d + B_d \sat  (u(t)), \label{eq:ODE1}\\
y_d =& C_d x_d.\label{eq:ODE2}
\end{align}
\end{subequations}


%
Here $x_d$ in $\R^{n_d}$ is the finite-dimensional state and the dynamics
are subject to a saturating control, $A_d$, $B_d$ and $C_d$ are three
matrices of appropriate dimension, and  $u$ is the scalar control input
for the PDE (\ref{boundary:control:4-sept}) and the ODE
(\ref{finite:control:4-sept}) that is subject to a saturation map. 
Inspired by \cite{PrT19}, we introduce the following change of variable:
\begin{equation*}
w(t,x)= y(t,x)- \frac{x}{L} y_d(t), \quad t\geq 0, \; x\in (0,L).
\end{equation*}
The PDE for $w$ then reads as:
{\allowdisplaybreaks
\begin{eqnarray}
w_t(t,x) &=& y_t(t,x)- \frac{x}{L} \dot{y}_d(t)\nonumber\\
				 &=& y_{xx}(t,x) + c(x)  y(t,x) - \frac{x}{L} C_d \dot{x}_d(t)\nonumber\\
				 &=& w_{xx}(t,x) + c(x) \big(w(t,x) + \frac{x}{L} y_d(t)\big) \nonumber\\
				&&\qquad- \frac{x}{L} C_d \big(A_d x_d(t) + B_d \sat(u(t))\big)\nonumber\\
				 &=& w_{xx}(t,x) + c(x) \big(w(t,x) + \frac{x}{L} C_d x_d(t)\big) \nonumber\\
				&&\qquad- \frac{x}{L} C_d \big(A_d x_d(t) + B_d \sat(u(t))\big)\nonumber\\
				 &=& w_{xx}(t,x) + c(x) w(t,x) \nonumber\\
				&&\qquad+ \big(\underbrace{c(x) \frac{x}{L} C_d - \frac{x}{L} C_d A_d}_{=:d(x)}\big) x_d(t)\nonumber\\
				&&\qquad+\big(\underbrace{- \frac{x}{L} C_d B_d}_{=:b(x)}\big) \sat(u(t)).
\label{boundary:control:4-sept:2}
\end{eqnarray}
}
Please note that $b$ is a scalar function, and $d$ is a row vector function with $d(x) \in \R^{1\times n_d}$, $x\in[0,L]$.

The boundary conditions for the variable $w$ take the form:
\begin{equation}\label{boundary:control:4-sept:2-bc}
w(t,0)=w(t,L)=0 , \; t\geq 0.
\end{equation}
The heat equation \eqref{boundary:control:4-sept:2},
\eqref{boundary:control:4-sept:2-bc} has to be analyzed along with the ODE \eqref{eq:ODE1}.

Performing similar computations as in Section \ref{sec:Start} for the PDE (\ref{closed-loop:sat}) and, using the same notation for $w_j$ and $\lambda_j$, we get
\begin{equation*}
\dot w_ j (t) = \lambda_ j w_ j(t) + b_j \sat (u(t)) + d_ j x_d (t),\ j=1,  2, \ldots,
\end{equation*}
where $b$ and $d$ are defined for $x$ in $[0,L]$ in \eqref{boundary:control:4-sept:2} and $b_j=\langle b(\cdot),e_j(\cdot)\rangle_{L_2(0,L)}$, $d_j=\langle d(\cdot),e_j(\cdot)\rangle_{L_2(0,L)}$, for $j=1,  2, \ldots$. 

Let us consider the first $n$ equations with the ODE (\ref{finite:control:4-sept}) and rewrite this finite-dimensional system as follows:
\begin{eqnarray}
z'(t) &=& \mathbf{A}z(t) + \mathbf{B} \mathbf{K}z(t) + \mathbf{B} \phi(\mathbf{K}z(t))\nonumber\\
&=& \mathbf{A} z(t) + \mathbf{B} \sat(\mathbf{K}z(t)),
\label{eq:Systfini_saturated:4-sept}
\end{eqnarray}
where $\mathbf{K}$ in $\R^{1\times (n+n_d)}$ is a row vector to be designed,
\begin{eqnarray*}
z(t)&:=&(x_d^\top(t),\omega_1(t),\ldots,\omega_n(t))^\top, \quad t\geq 0\\
\mathbf{B} &:=& (B_d^\top,b_1,\ldots,b_n)^\top \in \R^{(n+n_d)\times 1},
\end{eqnarray*}
and 
$\mathbf{A}{:=}
\begin{pmatrix}
A_d & 0\\
D & \Lambda	
\end{pmatrix}
\in \R^{(n+n_d)\times (n+n_d)}$, where
\[
\small
D{:=}
\begin{pmatrix}
d_{11}& d_{12} &\cdots & d_{1n_d}\\
\vdots& \vdots & \vdots & \vdots \\
d_{n1}& d_{n2} &\cdots & d_{nn_d}
\end{pmatrix},
\quad
\Lambda {:=} 
\begin{pmatrix}
\lambda_1 &  & 0 \\
 & \ddots &  \\
0  & & \lambda_n 
\end{pmatrix}
.
\]

As the control input is scalar, we can apply
Proposition~\ref{prop:Stabilization_FiniteDim_Sys_with_Saturation_modified}
to system (\ref{eq:Systfini_saturated:4-sept}) instead of system
\eqref{eq:Systfini_saturated_FW}, and get sufficient conditions for the
estimation of the region of attraction of (\ref{eq:Systfini_saturated:4-sept}). Coming back to the infinite-dimensional systems (\ref{boundary:control:4-sept:2}) and
(\ref{boundary:control:4-sept}), and, applying Proposition~\ref{prop:Attraction_region_finite_dim_and_infinite_dim}, we get sufficient conditions for an estimation of attraction region of (\ref{boundary:control:4-sept}):
\begin{corollary}
Consider system \eqref{eq:Systfini_saturated_FW} with a stabilizable pair $(\mathbf{A},\mathbf{B}) \in \R^{n\times n}\times\R^{n\times 1}$ and let $\mathbf{K}\in\R^{1\times n}$ be such that $\mathbf{A}+\mathbf{B}\mathbf{K}$ is Hurwitz.
Pick a symmetric positive definite {$\tilde{P}\in\mathbb{R}^{(n+n_d)\times (n+n_d)}$} and a $\mathbf{C}\in\mathbb{R}^{1\times (n+n_d)}$ such that \eqref{eq:LMI1_modified} holds.

Then the finite-dimensional system (\ref{eq:Systfini_saturated:4-sept}) is locally exponentially stable in $0$ with a region of attraction given by
\begin{eqnarray}
{\cal A}:=\{z: \; z ^\top \tilde{P} z \leq \mathbf{D}^ {-1} \},
\label{eq:Attraction_Region_D-boundary-control}
\end{eqnarray}
where $\mathbf{D}$ is the minimal real number such that \eqref{eq:LMI2_modified} holds.

Moreover, 
\begin{itemize}
	\item[(i)] \eqref{boundary:control:4-sept:2} is locally exponentially stable with a region of attraction
{$\imath({\cal A}) \times X_n^\perp$,}
	\item[(ii)] \eqref{boundary:control:4-sept} is locally exponentially stable.
\end{itemize}
\end{corollary}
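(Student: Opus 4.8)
The plan is to reduce the final corollary to results already established in the paper, treating the boundary-control problem as a special instance of the distributed-control framework. The key observation is that the change of variable $w(t,x) = y(t,x) - \frac{x}{L}y_d(t)$ has already transformed the boundary-control system \eqref{boundary:control:4-sept}, \eqref{finite:control:4-sept} into an equivalent system \eqref{boundary:control:4-sept:2}, \eqref{boundary:control:4-sept:2-bc} with a \emph{bounded} (distributed) input operator, coupled with the finite-dimensional ODE \eqref{eq:ODE1}. The spectral decomposition then produces the augmented finite-dimensional unstable part \eqref{eq:Systfini_saturated:4-sept}, whose state $z$ stacks the ODE state $x_d$ together with the first $n$ modal coefficients $\omega_1,\ldots,\omega_n$. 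Since the input here is scalar ($m=1$), I would first invoke Proposition~\ref{prop:Stabilization_FiniteDim_Sys_with_Saturation_modified} for the augmented matrices $(\mathbf{A},\mathbf{B})$ of \eqref{eq:Systfini_saturated:4-sept}: feasibility of the LMI \eqref{eq:LMI1_modified} in the variables $\tilde{P}$ and $\mathbf{C}$ of the appropriate enlarged dimension $(n+n_d)$ yields, via the minimal $\mathbf{D}$ satisfying \eqref{eq:LMI2_modified}, local exponential stability of \eqref{eq:Systfini_saturated:4-sept} with the ellipsoidal region of attraction $\mathcal{A}$ in \eqref{eq:Attraction_Region_D-boundary-control}.

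For part (i), I would then lift this finite-dimensional result to the infinite-dimensional $w$-system by applying Proposition~\ref{prop:Attraction_region_finite_dim_and_infinite_dim}. The crucial point is that the transformed dynamics \eqref{boundary:control:4-sept:2} has exactly the cascade structure required by that proposition: the infinite-dimensional stable modes $w_j$ for $j>n$ satisfy $\dot w_j = \lambda_j w_j + b_j\sat(u) + d_j x_d$ with $\lambda_j < -\eta$, and both the forcing coefficients $b_j$ and $d_j$ are the Fourier coefficients of $L_2$ functions, hence square-summable. Because the saturated input and the ODE state $x_d$ both decay exponentially once $z$ enters $\mathcal{A}$ (the latter being a component of $z$ itself), the same variation-of-constants estimate used in the proof of Proposition~\ref{prop:Attraction_region_finite_dim_and_infinite_dim} shows that $\sum_{j>n}|w_j(t)|^2$ decays exponentially. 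This gives local exponential stability of \eqref{boundary:control:4-sept:2} with region of attraction $\imath(\mathcal{A}) \times X_n^\perp$.

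For part (ii), I would transfer the stability back to the original $y$-variable. Since the change of variable $y(t,x) = w(t,x) + \frac{x}{L}y_d(t) = w(t,x) + \frac{x}{L}C_d x_d(t)$ is a bounded, boundedly invertible affine map between the augmented state $(w,x_d)$ and $(y,x_d)$, and since $x_d$ is part of the exponentially decaying finite-dimensional state $z$, exponential convergence of $w$ and $x_d$ to zero immediately yields exponential convergence of $y$. Thus \eqref{boundary:control:4-sept} is locally exponentially stable.

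\textbf{The main obstacle} I anticipate is verifying the square-summability and exponential-decay bookkeeping in the cascade argument for part (i) with the \emph{additional} coupling term $d_j x_d(t)$, which is absent in the original Proposition~\ref{prop:Attraction_region_finite_dim_and_infinite_dim}. One must confirm that $(d_j)_{j>n} \in \ell_2$ (which follows since $d \in L_2(0,L)^{1\times n_d}$ by the Cauchy--Bunyakovsky--Schwarz estimate analogous to \eqref{eq:b_j_estimate}) and that $x_d(t)$, being a coordinate of $z(t)$, inherits the exponential decay from the finite-dimensional analysis. Once these two facts are in place, the estimate on $|w_j(t)|$ splits into a homogeneous term plus two convolution terms — one driven by $\sat(u)$ and one by $x_d$ — each handled exactly as in the original proof, so no genuinely new analytic difficulty arises; the work is purely organizational.
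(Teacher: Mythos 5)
Your proposal is correct and follows essentially the same route as the paper: the paper likewise applies Proposition~\ref{prop:Stabilization_FiniteDim_Sys_with_Saturation_modified} to the augmented scalar-input system \eqref{eq:Systfini_saturated:4-sept} and then invokes Proposition~\ref{prop:Attraction_region_finite_dim_and_infinite_dim} to pass to \eqref{boundary:control:4-sept:2} and back to \eqref{boundary:control:4-sept}. If anything, you are more careful than the paper on one point: the residual modes here carry the extra coupling term $d_j x_d(t)$, which is absent from the setting of Proposition~\ref{prop:Attraction_region_finite_dim_and_infinite_dim}, and your square-summability-plus-exponential-decay argument is precisely the extension needed to justify the paper's terse appeal to that proposition.
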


\section{Conclusion}
\label{sec:conclusion}

A linear unstable reaction-diffusion equation has been considered in this paper. Both boundary control and in-domain control cases have been considered. For this control problem, saturated feedback control laws have been designed so that the origin is a locally asymptotically stable equilibrium. The region of attraction has been estimated by an appropriate Lyapunov function and LMI technique. The interest and the efficiency of our approach have been illustrated by means of numerical simulations.

This work leaves several questions open. In particular it could be useful to consider other classes of Lyapunov functions than the ones considered in this work, and to compare the associated estimations of region of attraction. Moreover, it could be interesting to use this work for the estimation of the region of attraction in presence of disturbance and to study local input-to-state stability (as presented in Remark \ref{rem:LISS_wrt_actuator_disturbances}). Finally, let us note that extension of our method to nonlinear systems, other boundary conditions and control input may be considered as a future work.

\vspace{-2mm}

\bibliographystyle{abbrv}
\bibliography{MyPublications,Mir_LitList_NoMir}

\begin{IEEEbiography}[{\includegraphics[width=1in,height=1.25in,clip,keepaspectratio]{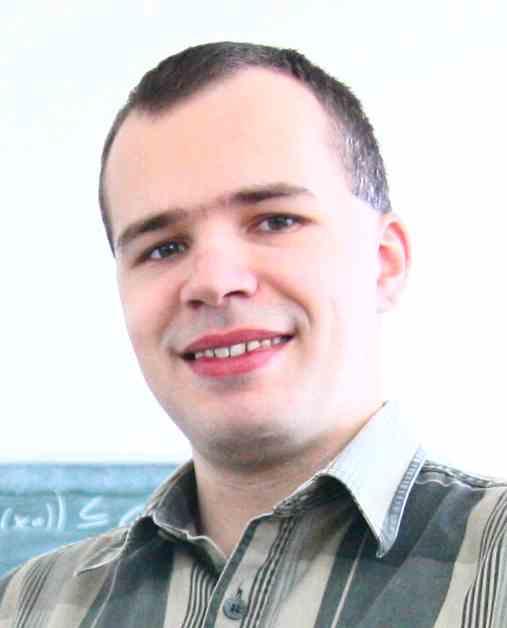}}]{Andrii Mironchenko}
received his MSc at the I.I. Mechnikov Odessa National University in 2008 and his PhD at the University of Bremen in 2012. 
He has held a research position at the University of W\"urzburg and was a Postdoctoral JSPS fellow at the Kyushu Institute of Technology (2013--2014). 
In 2014 he joined the Faculty of Mathematics and Computer Science at the University of Passau. 
His research interests include infinite-dimensional systems, stability theory, hybrid systems and applications of control theory to biological systems. 
\end{IEEEbiography}

\begin{IEEEbiography}[{\includegraphics[width=25mm,height=32mm,clip,keepaspectratio]{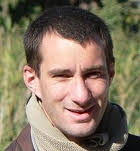}}]{Christophe Prieur}
 was born in 1974. He is currently a senior researcher of the CNRS at the Gipsa-lab, Grenoble, France. He is currently a member of the EUCA-CEB, an associate editor for the  AIMS Evolution Equations and Control Theory and IEEE Trans. on Control Systems Technology, a senior editor for the IEEE Control Systems Letters, and an editor for the IMA Journal of Mathematical Control and Information. He was the Program Chair of the 9th IFAC Symposium on Nonlinear Control Systems (NOLCOS 2013) and of the 14th European Control Conference (ECC 2015). His current research interests include nonlinear control theory, hybrid systems, and control of partial differential equations. 
\end{IEEEbiography} 

\begin{IEEEbiography}[{\includegraphics[width=1in,height=1.25in,clip,keepaspectratio]{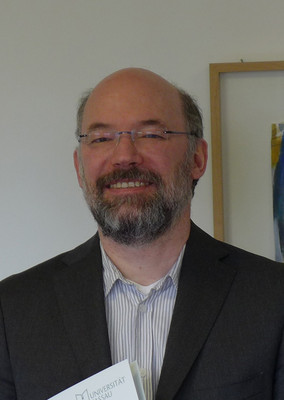}}]{Fabian Wirth}
received his PhD from the Institute of Dynamical Systems at
the University of Bremen in 1995. He has since held positions at the Centre Automatique et Syst{\`e}mes of Ecole des Mines, the
Hamilton Institute at NUI Maynooth, Ireland, the University of W\"urzburg and IBM Research Ireland. 
He now holds the chair for Dynamical Systems at the University
of Passau. His current interests include stability
theory, switched systems and large scale networks with applications to networked systems and in the domain of smart cities.
\end{IEEEbiography}

\appendix{} 

\subsection{Series expansion of solutions}
\label{appendix2}

Denoting the saturated nonlinearity
in \eqref{closed-loop:sat} by $f:\R_+ \to X$, we have for the mild solution of \eqref{newzero} that
\begin{equation*}
w(t)=T(t)w(0)+\int_{0}^{t}T(t-s)f(s)ds\,,
\end{equation*}
where $T(t)$ is the strongly continuous semigroup generated by $A$.
Here the integral on the right-hand side is well-defined as the Bochner integral,
see \cite[Example A.1.13]{JaZ12}.

Let $\left(e_j\right)_{j\geq 1}$ be the Hilbert basis of $X$ given by the eigenfunctions of $A$. Then we can define
\begin{align*}
w_j(t) &:=\left\langle w(t), e_j\right\rangle\\
       &=\left\langle T(t) w(0), e_j\right\rangle + \left\langle\int_{0}^{t}T(t-s)f(s)ds, e_j\right\rangle
\end{align*}
and by \cite[Corollary V.5.2]{Yos80} we may interchange the linear map
$\langle \cdot, e_j \rangle$ with the integral to obtain
\begin{align*}
w_j(t) &= \left\langle T(t)w(0),e_j\right\rangle +\int_{0}^{t}\left\langle T(t-s)f(s), e_j\right\rangle ds\\
 &=\left\langle w(0), T(t)^{\ast}e_j\right\rangle + \int_{0}^{t}\left\langle f(s), T(t-s)^{\ast}e_j\right\rangle ds\\
 &=e^{\lambda_jt}\left\langle w(0), e_j\right\rangle + \int_{0}^{t}e^{\lambda_j(t-s)}\left\langle f(s), e_j\right\rangle ds\,.
\end{align*}
Here the integral on the right is a standard Lebesque
integral and so $w_j$ solves an integral equation. Thus $w_j$ is absolutely
continuous and satisfies, for almost all $t$, the Carath{\'e}odory equation
\begin{align*}
\dot{w}_j(t) &=\lambda_jw_j(t)+\left\langle \sum_{k=1}^{m}b_k\mathrm{sat}
  \left(u_k(t)\right), e_j\right\rangle \\ 
&=\lambda_j w_j(t)+\sum_{k=1}^{m}b_{kj}\mathrm{sat} \left(u_k(t)\right)\,.
\end{align*}
This justifies the consideration of \eqref{sys-dim-infinie} as an
equivalent system for \eqref{newzero}.

\subsection{Compactness of the resolvent for Sturm-Liouville operators}
\label{appendix}

The following discussion summarizes some results from \cite{SaS03}, which
provide the necessary arguments to show the compactness of the resolvent
of the operator
$A$ introduced in Section~\ref{sec:Start}.

Here we use the following notation. All function spaces are considered on
the interval $[0,L]$. The Sobolev space $W^{k}_p$ is the
space of $L_p$-functions ($1\leq p<\infty$) such that the function is
$k$-times weakly differentiable and the corresponding derivatives are
again in $L_p$. Note in particular that $W^{k}_2=H^k$. For negative
indices, we set $W^{-1}_2 := \left(W^{1}_{0,2}\right)^*$ (the dual to $W^{1}_{0,2}=H^1_0$).


Recall that an operator $A \in L(X)$ is said to be compact, if $A$
maps bounded sets into precompact sets. For a densely defined linear
operator $(A,D(A)):X\to X$ with a nonempty resolvent set $\rho(A)$, it is an easy consequence of the
resolvent identity that  the resolvent
$R_\lambda(A)$ is compact for some $\lambda$ in the resolvent set, if and
only if it is compact on the entire resolvent set,
see \cite[Theorem~III.6.29]{Kat95}.



\begin{definition}
\label{def:Operator-with-a-compact-resolvent} 
We say that a closed densely defined linear operator $(A,D(A)):X\to X$ has
a compact resolvent, if there exists a $\lambda \in\rho(A)$ so that $R_\lambda(A)$ is compact.
\end{definition}

\subsubsection{Some results from \cite{SaS03}}

We note that in \cite{SaS03} the case $L=\pi$ is
considered, which requires some rescaling to use their results.

Let $X:=L_2(0,L)$, $q \in W^{-1}_2(0,L)$, and define (for $y \in W^{1}_1(0,L)$) the quasiderivative
\begin{eqnarray}
y^{[1]}(x):= \frac{dy}{dx} - Q(x)y(x),
\label{eq:Quasiderivative}
\end{eqnarray}
where 
\[
Q(x):= \int_0^xq(s)ds.
\]

For $q \in W^{-1}_2(0,L)$ it holds that $Q \in X$, and for $q \in X$ it holds that $Q\in W^1_2(0,L)$.

Let $q\in L_1(0,L)$ be given.
Consider the formal Sturm-Liouville operator $SL:X\to X$ defined by 
\[
SL(y):= - \frac{d^2y}{dx^2} + q(x)y(x), \quad x \in (0,L), 
\]
where $(0,L) \subset \R$.
In order to fully define the operator $SL$, we have to introduce its domain of definition.
Following \cite[Section 1.1]{SaS03} we define the maximal operator $L_M$, defined by
\begin{subequations}
\label{eq:L_M-operator}
\begin{align}
L_My =& SL y, \\
D(L_M):=&\{y : y,y^{[1]} \in W^{1}_1(0,L), SL(y) \in X\}.
\end{align}
\end{subequations}

The following result has been shown in \cite[Theorem 1.5]{SaS03}:
\begin{theorem}
\label{thm:Theorem-1.5-SaS} 
Let the operator $A$ be the restriction of $L_M$ to the domain
\begin{eqnarray}
D(A):=\{y\in D(L_M): U_1(y)=U_2(y) = 0\},
\label{eq:SaS-theorem}
\end{eqnarray}
where for $j=1,2$ it holds that
\begin{eqnarray}
U_j(y) = a_{j1} y(0) + a_{j2} y^{[1]}(0) + b_{j1} y(L) + a_{j2} y^{[1]}(L),
\label{eq:SL-boundary-conditions}
\end{eqnarray}
where $a_{j1}, a_{j2}, b_{j1} , a_{j2}$ are real numbers, for $j=1,2$.
 
Let $J_{\alpha\beta}$ be the determinant of the $\alpha$-th and $\beta$-th column of a matrix 
\begin{eqnarray}
\begin{pmatrix}
a_{11} & a_{12} & b_{11} & b_{12}\\	
a_{21} & a_{22} & b_{21} & b_{22}
\end{pmatrix}.
\label{eq:BC-coefs-matrix}
\end{eqnarray}
Then the operator $A$ has a nonempty resolvent set $\rho(A)$, has a compact resolvent and discrete spectrum, if one of the following conditions holds:
\begin{enumerate}
	\item[(i)] $J_{42}\neq 0$,
	\item[(ii)] $J_{42}= 0$, $J_{14}+J_{32}\neq 0$,
	\item[(iii)] $J_{42}= J_{14} = J_{32} = 0$, $J_{12}+J_{34} =  0$, $J_{13} \neq 0$.
\end{enumerate} 
\end{theorem}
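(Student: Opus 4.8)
The plan is to realize the resolvent of $A$ as a Hilbert--Schmidt integral operator for a single well-chosen $\lambda$, from which compactness of the resolvent on all of $\rho(A)$ and discreteness of the spectrum follow by standard arguments. First I would rewrite the distributional Sturm--Liouville equation $SL(y)-\lambda y = f$ as a first-order system in the regularized variables $(y,y^{[1]})$, where $y^{[1]}$ is the quasiderivative from \eqref{eq:Quasiderivative}. A direct computation using $q=Q'$ gives
\[
\frac{d}{dx}\begin{pmatrix} y \\ y^{[1]} \end{pmatrix} = \begin{pmatrix} Q(x) & 1 \\ -Q(x)^2-\lambda & -Q(x) \end{pmatrix}\begin{pmatrix} y \\ y^{[1]} \end{pmatrix} - \begin{pmatrix} 0 \\ f \end{pmatrix},
\]
and since $q\in W^{-1}_2(0,L)$ forces $Q\in X=L_2(0,L)$ and hence $Q,Q^2\in L_1(0,L)$, the coefficient matrix has integrable entries. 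Carath\'eodory theory for linear systems then yields, for every $\lambda\in\C$, a unique absolutely continuous fundamental matrix $\Phi(\cdot,\lambda)$, entire in $\lambda$, together with a variation-of-constants formula for the particular solution driven by $f$.

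Next I would form the characteristic determinant. Fixing a fundamental system $y_1(\cdot,\lambda),y_2(\cdot,\lambda)$ normalized by the identity at $x=0$, the general solution of the inhomogeneous equation is $c_1y_1+c_2y_2+y_p$; imposing the two boundary functionals $U_1,U_2$ from \eqref{eq:SL-boundary-conditions} produces a $2\times 2$ linear system for $(c_1,c_2)$ whose determinant $\Delta(\lambda)$ is entire. For every $\lambda$ with $\Delta(\lambda)\neq 0$ the boundary value problem is uniquely solvable, so such $\lambda$ lies in $\rho(A)$, and the solution operator is the integral operator with the associated Green's function $G(x,t,\lambda)$. Because $G(\cdot,\cdot,\lambda)$ is assembled from the continuous functions $y_1,y_2$ and their quasiderivatives, it is bounded on $[0,L]^2$ and hence lies in $L_2([0,L]^2)$; therefore $R_\lambda(A)$ is Hilbert--Schmidt and, in particular, compact. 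The remaining content is thus to show that $\Delta\not\equiv 0$, since this alone guarantees both $\rho(A)\neq\emptyset$ and the existence of a compact resolvent.

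The hard part is exactly this nonvanishing of $\Delta$, and it is here that the algebraic conditions (i)--(iii) enter. The argument proceeds by asymptotic analysis of $y_1,y_2$ along rays $\lambda=\rho^2$, $|\rho|\to\infty$. For the singular potential one first establishes uniform estimates showing that $y_1,y_2$ differ from the free solutions $\cos\rho x$, $\rho^{-1}\sin\rho x$ only by terms controlled by $\|Q\|_{L_2}$ (this is the technical core of the Savchuk--Shkalikov estimates in \cite{SaS03}). Substituting these asymptotics into $\Delta(\lambda)$ expresses its leading coefficients precisely in terms of the $2\times 2$ minors $J_{\alpha\beta}$ of the boundary-coefficient matrix \eqref{eq:BC-coefs-matrix}: the top-order exponential term carries $J_{42}$, the next term carries $J_{14}+J_{32}$, and so on. Conditions (i)--(iii) form a cascade of successively more degenerate boundary conditions, each case isolating the first surviving asymptotic coefficient and requiring it to be nonzero, with $J_{12}+J_{34}=0$ serving as a compatibility relation in the most degenerate case (iii) and $J_{13}\neq 0$ then forcing a nonzero lower-order term. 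In every case the conclusion is that $|\Delta(\lambda)|$ grows without bound along the chosen ray, so $\Delta$ cannot vanish identically.

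Finally I would assemble the conclusion. Since $\Delta$ is entire and $\not\equiv 0$, its zeros are isolated, $\rho(A)$ is nonempty, and $R_\lambda(A)$ is compact for the chosen $\lambda$; by the resolvent identity compactness propagates to every point of $\rho(A)$, as recorded in the discussion preceding Definition~\ref{def:Operator-with-a-compact-resolvent} (cf.\ \cite[Theorem~III.6.29]{Kat95}). Compactness of $R_\lambda(A)$ then forces $\sigma(A)$ to consist of isolated eigenvalues of finite algebraic multiplicity with no finite accumulation point, that is, the spectrum is discrete. The main obstacle throughout is the uniform asymptotic control of the fundamental solutions for the distributional potential and the bookkeeping that matches the leading terms of $\Delta$ to the minors $J_{\alpha\beta}$; once those asymptotics are in hand, the classification (i)--(iii) reduces to a finite case analysis.
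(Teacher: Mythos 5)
The first thing to note is that the paper does not prove Theorem~\ref{thm:Theorem-1.5-SaS} at all: it is imported verbatim from \cite[Theorem 1.5]{SaS03}, and Remark~\ref{rem:Compactness of a resolvent} explicitly records that even the compactness assertion is only established inside that reference's proof, not in its statement. So there is no internal argument to compare yours against; the only question is whether your blind attempt constitutes an independent proof of the cited result.

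It does not, although your skeleton is the right one and matches the classical Birkhoff-regularity scheme that \cite{SaS03} adapts to singular potentials. Your reduction to a first-order system in $(y,y^{[1]})$ is computed correctly (with $Q\in L_2(0,L)$ the coefficient matrix is indeed in $L_1$), the Carath\'eodory step and entire dependence on $\lambda$ are standard, the Green's-function realization of $(A-\lambda)^{-1}$ as a Hilbert--Schmidt operator at any $\lambda$ with $\Delta(\lambda)\neq 0$ is correct, and the endgame (resolvent identity propagates compactness over $\rho(A)$, compact resolvent implies discrete spectrum) is the same argument the paper records before Definition~\ref{def:Operator-with-a-compact-resolvent}. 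The genuine gap is that the entire mathematical content of the theorem --- why the minor conditions (i)--(iii) force $\Delta\not\equiv 0$ --- is exactly the step you do not carry out. That step requires uniform large-$|\lambda|$ asymptotics of a fundamental system for a potential that is merely in $W^{-1}_2$ (this is delicate: the naive perturbation of $\cos\rho x$ and $\rho^{-1}\sin\rho x$ with errors ``controlled by $\|Q\|_{L_2}$'' must be replaced by the quasiderivative-adapted asymptotics that are the main technical achievement of \cite{SaS03}), together with the explicit bookkeeping identifying the successive leading coefficients of $\Delta$ with $J_{42}$, then $J_{14}+J_{32}$, then $J_{13}$ under the degeneracy relations of case (iii). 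By deferring precisely this to ``the Savchuk--Shkalikov estimates in \cite{SaS03}'' you have not proved the theorem but re-cited it --- which, to be fair, is also exactly how the paper itself treats the statement.
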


\begin{remark}
\label{rem:Compactness of a resolvent} 
Compactness of the resolvent of $A$ is not mentioned in the formulation of \cite[Theorem 1.5]{SaS03}, but 
its compactness was shown in the proof.
\end{remark}

\begin{remark}
\label{rem:Birkhoff-regularity.} 
If one of conditions (i)--(iii) holds, then the boundary conditions \eqref{eq:SL-boundary-conditions} are called
\emph{Birkhoff-regular}.
\end{remark}

\subsubsection{Application to the system in Section~\ref{sec:Start}}

Consider the operator $A$ as in Section~\ref{sec:Start}, i.e.
\begin{subequations}
\label{eq:Our-operator-A}
\begin{eqnarray}
A&:=&\partial_{xx}+c(\cdot)\mathrm{id}: X \to X,\\
D(A)&=&H^2(0,L)\cap H^1_0(0,L)
\end{eqnarray}
\end{subequations}

Let us show that Theorem~\ref{thm:Theorem-1.5-SaS} can be used for our operator $A$.
We need the following lemma, see \cite[Exercise 4 at p. 306]{Eva10}.
\begin{lemma}
\label{lem:Characterization-of-Sobolev-spaces-in-1D} 
Let $p\in [1,+\infty)$. Then $f \in W^{1}_p(0,L)$ if and only if $f$ is equal a.e. to an absolutely continuous function,
the derivative $f'$ exists a.e. and is an element of $L_p(0,L)$.
\end{lemma}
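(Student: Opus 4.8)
The plan is to establish both implications of the stated equivalence, leaning on two classical one-dimensional facts that I would take as known: that a function on $[0,L]$ is absolutely continuous if and only if it is the indefinite Lebesgue integral of some $g \in L_1(0,L)$, in which case it is differentiable a.e.\ with pointwise derivative equal to $g$ a.e.\ (the fundamental theorem of calculus for absolutely continuous functions); and the lemma that a function in $L_1(0,L)$ whose weak derivative vanishes must be a.e.\ constant.

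First I would treat the implication from the right-hand side. Suppose $f$ agrees a.e.\ with an absolutely continuous $\tilde f$ whose a.e.\ derivative $g := \tilde f'$ belongs to $L_p(0,L)$. Since $\tilde f$ is continuous on the compact interval $[0,L]$, it is bounded and hence $f \in L_p(0,L)$. For every $\varphi \in C_0^\infty(0,L)$ the integration-by-parts formula for absolutely continuous functions yields $\int_0^L \tilde f\, \varphi'\, dx = -\int_0^L g\, \varphi\, dx$, the boundary terms dropping out because $\varphi$ has compact support in $(0,L)$. As $f = \tilde f$ a.e., this identity identifies $g$ as the weak derivative of $f$; since $f, g \in L_p(0,L)$, we conclude $f \in W^{1}_p(0,L)$.

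For the converse, let $f \in W^{1}_p(0,L)$ with weak derivative $g \in L_p(0,L)$. On the bounded interval one has $L_p(0,L) \subset L_1(0,L)$, so I would set $\tilde f(x) := \int_0^x g(s)\, ds$, an absolutely continuous function with $\tilde f' = g$ a.e. Integration by parts for $\tilde f$ gives $\int_0^L \tilde f\, \varphi'\, dx = -\int_0^L g\, \varphi\, dx$ for all $\varphi \in C_0^\infty(0,L)$, so that $f - \tilde f$ has vanishing weak derivative. The constancy lemma then produces a constant $c$ with $f - \tilde f = c$ a.e., whence $f = \tilde f + c$ a.e.\ is absolutely continuous, differentiable a.e.\ with $f' = g \in L_p(0,L)$, as claimed.

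The only genuinely nontrivial ingredient, and thus the main obstacle, is the constancy lemma, whose proof I would spell out. I would fix $\eta \in C_0^\infty(0,L)$ with $\int_0^L \eta\, dx = 1$, and for arbitrary $\psi \in C_0^\infty(0,L)$ set $c_\psi := \int_0^L \psi\, ds$ and $\varphi(x) := \int_0^x \big(\psi(t) - c_\psi\, \eta(t)\big)\, dt$. The integrand has total integral zero, so $\varphi$ vanishes identically near both endpoints and therefore lies in $C_0^\infty(0,L)$, with $\varphi' = \psi - c_\psi\, \eta$. Writing $h := f - \tilde f$ and $c := \int_0^L h\, \eta\, dx$, feeding this $\varphi$ into the vanishing-weak-derivative identity for $h$ gives $0 = \int_0^L h\, \varphi'\, dx = \int_0^L (h - c)\, \psi\, dx$, valid for every $\psi \in C_0^\infty(0,L)$; the fundamental lemma of the calculus of variations then forces $h = c$ a.e. Everything else in the argument reduces to the standard theory of absolutely continuous functions on a bounded interval.
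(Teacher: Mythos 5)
Your proof is correct. Note, however, that the paper does not prove this lemma at all: it is stated with a citation to \cite[Exercise 4, p.~306]{Eva10} (Evans' PDE text) and used as a known fact, so there is no in-paper argument to compare against. What you supply is the standard self-contained proof of exactly the result being cited, and it is sound in both directions: the easy implication follows from integration by parts for absolutely continuous functions (the product $\tilde f\varphi$ is AC, so the boundary-free identity $\int_0^L \tilde f\,\varphi'\,dx = -\int_0^L \tilde f'\,\varphi\,dx$ holds and exhibits $\tilde f'$ as the weak derivative), and the converse correctly reduces, via the primitive $\tilde f(x)=\int_0^x g(s)\,ds$ of the weak derivative, to the constancy lemma for functions with vanishing weak derivative. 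Your proof of that lemma is also complete: the test function $\varphi(x)=\int_0^x\bigl(\psi(t)-c_\psi\,\eta(t)\bigr)\,dt$ does lie in $C_0^\infty(0,L)$ because the integrand vanishes near both endpoints and has total integral zero, and the du Bois-Reymond (fundamental) lemma then yields $h=c$ a.e. The only ingredients you take on faith — the FTC characterization of absolute continuity and the fundamental lemma of the calculus of variations — are genuinely classical and at a lower level than the statement itself, so the argument is not circular. In short: the paper buys brevity by citation; your version makes the appendix self-contained at the cost of a page, and either is a legitimate way to discharge the lemma.
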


First we show
\begin{lemma}
\label{lem:Restatement of the domain} 
The domain $D(A)$ of the operator \eqref{eq:Our-operator-A} has the form \eqref{eq:SaS-theorem} for $U_1(y) = y(0)$ and $U_2(y)=y(L)$.
\end{lemma}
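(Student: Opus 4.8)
The plan is to reduce the entire claim to the single identity $D(L_M)=H^2(0,L)$. Once this is in hand, the asserted form \eqref{eq:SaS-theorem} of $D(A)$ follows with little effort. Since $c\in L_2(0,L)\subset L_1(0,L)$, setting $q:=-c$ makes the operator $A$ of \eqref{eq:Our-operator-A} coincide with $-SL$, so $A$ and $SL$ share the same domain; in particular $Q(x)=\int_0^x q(s)\,ds\in H^1(0,L)\subset C[0,L]$ is absolutely continuous and bounded. By the one-dimensional embedding $H^1(0,L)\hookrightarrow C[0,L]$ the endpoint values $y(0),y(L)$ are well defined for every $y\in H^2\subset H^1$, and $H^1_0(0,L)=\{y\in H^1(0,L):y(0)=y(L)=0\}$. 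The functionals $U_1(y)=y(0)$ and $U_2(y)=y(L)$ are clearly special cases of \eqref{eq:SL-boundary-conditions}, so intersecting $D(L_M)=H^2(0,L)$ with the constraints $U_1(y)=U_2(y)=0$ yields exactly $H^2(0,L)\cap H^1_0(0,L)=D(A)$.

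For the inclusion $H^2(0,L)\subseteq D(L_M)$ I would argue directly. If $y\in H^2$, then $y\in H^1\subset W^1_1$ and $y'\in H^1$, while $Q$ is absolutely continuous and bounded; hence the product $Qy$ is absolutely continuous with derivative $qy+Qy'\in L_1$, so the quasiderivative $y^{[1]}=y'-Qy$ lies in $W^1_1$. Finally $SL(y)=-y''+qy\in L_2=X$, because $y''\in L_2$ and $qy\in L_2$ (as $q\in L_2$ and $y\in L_\infty$ by the embedding). Thus $y\in D(L_M)$.

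The hard part is the reverse inclusion $D(L_M)\subseteq H^2(0,L)$, which I would establish by bootstrapping regularity through the relation $y'=y^{[1]}+Qy$. Given $y\in D(L_M)$, both $y$ and $y^{[1]}$ are absolutely continuous; since $Q$ is absolutely continuous and bounded, $Qy$ is absolutely continuous, whence $y'=y^{[1]}+Qy$ is absolutely continuous. In particular $y''$ exists a.e., so $SL(y)=-y''+qy$ is well defined a.e. Moreover $y'$, as a sum of bounded functions, lies in $L_\infty\subset L_2$, so $y\in H^1$ by Lemma~\ref{lem:Characterization-of-Sobolev-spaces-in-1D}. From $SL(y)\in X$ I then solve $y''=qy-SL(y)$, whose right-hand side belongs to $L_2$ because $q\in L_2$ and $y\in L_\infty$; hence $y''\in L_2$ and $y\in H^2$.

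I expect the main obstacle to be the careful chasing of absolute continuity and $L_2$-membership through the quasiderivative: because $q$ is only $L_2$ and not continuous, one cannot a priori treat $y''$ classically, and the crucial upgrade from $y\in W^1_1$ to $y\in H^2$ is precisely the content of the condition $y^{[1]}\in W^1_1$. The argument hinges on repeatedly invoking the boundedness of $Q\in H^1\subset L_\infty$ to keep every product in $L_2$, together with the one-dimensional Sobolev embedding and the characterization of $W^1_p$ in Lemma~\ref{lem:Characterization-of-Sobolev-spaces-in-1D}, which are the technical tools that make each step go through.
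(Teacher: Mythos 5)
Your proof is correct and follows essentially the same route as the paper: both arguments reduce the claim to the identity $D(L_M)=H^2(0,L)$ using the absolute-continuity characterization of Lemma~\ref{lem:Characterization-of-Sobolev-spaces-in-1D}, the absolute continuity and boundedness of $Q$, and the equivalence $SL(y)\in X \Leftrightarrow y''\in X$, and then impose the endpoint conditions. The differences are only organizational---you argue by double inclusion where the paper rewrites the two domains through a chain of equivalent reformulations, and you make explicit two points the paper leaves implicit (the sign convention $q=-c$ and the characterization of $H^1_0$ via vanishing endpoint values).
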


\begin{proof}
Let $AC((0,L),\R)$ be the space of absolutely continuous functions from $(0,L)$ to $\R$.
In view of Lemma~\ref{lem:Characterization-of-Sobolev-spaces-in-1D} 
the set $D(A)$ can be rewritten as
\begin{eqnarray}
D(A) &=& \{f\in AC((0,L),\R):\frac{df}{dx} \in AC((0,L),\R), \nonumber\\
 &&\qquad \frac{d^2f}{dx^2} \in L_2(0,L),\ f(0)=f(L)=0\}.
\label{eq:Domain-restatement-1}
\end{eqnarray}

Now consider the domain defined in  \eqref{eq:SaS-theorem}.

As we restrict our attention to the case when $q \in X$, for any $y \in W^1_1 (0,L)$ it holds that 
$Q(\cdot)y(\cdot) \in W^1_1 (0,L)$, and thus $y^{[1]} \in W^1_1 (0,L)$ if and only if $\frac{dy}{dx} \in W^1_1 (0,L)$.

Hence the domain $D(L_M)$ can be equivalently written as 
\[
D(L_M)=\{f : f, \frac{df}{dx} \in W^{1}_1(0,L), SL(f) \in X\}.
\]
Using again Lemma~\ref{lem:Characterization-of-Sobolev-spaces-in-1D}, we see that
\begin{eqnarray}
D(L_M)&=&\{f\in AC((0,L),\R): \frac{df}{dx} \in AC((0,L),\R), \nonumber \\
&&\qquad \frac{d^2f}{dx^2} \in L_1(0,L),\  SL(f) \in X\}.
\end{eqnarray}
Furthermore, as $f \in D(L_M)$ is absolutely continuous on $(0,L)$, and  $q \in X$, it holds that $qf \in X$ and it holds that $SL(f) \in X$ if and only if $\frac{d^2f}{dx^2} \in X$.
As $X \subset L_1(0,L)$, we can finally restate $D(L_M)$ as
\begin{eqnarray}
D(L_M)&=&\{f\in AC((0,L),\R): \frac{df}{dx} \in AC((0,L),\R),\nonumber \\
&&\qquad \frac{d^2f}{dx^2} \in L_2(0,L)\}.
\end{eqnarray}
Using the boundary conditions $U_1(y) = y(0)$ and $U_2(y)=y(L)$, we see that 
the set \eqref{eq:SaS-theorem} is precisely \eqref{eq:Domain-restatement-1}.
\end{proof}

In view of Lemma~\ref{lem:Restatement of the domain}, we can use Theorem~\ref{thm:Theorem-1.5-SaS} 
to study the spectral properties of the Sturm-Liouville operator $A$.

\begin{proposition}
\label{prop:Compact-resolvent-of-A} 
The operator \eqref{eq:Our-operator-A} has a compact resolvent.
\end{proposition}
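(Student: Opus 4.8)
The plan is to reduce the claim to Theorem~\ref{thm:Theorem-1.5-SaS}, which already supplies compactness of the resolvent for Birkhoff-regular Sturm--Liouville operators. First I would observe that the operator in \eqref{eq:Our-operator-A} is, up to a sign, a Sturm--Liouville operator: writing $q := -c$ we have $A = -SL$, where $SL(y) = -y'' + q(x)y$. Since $c \in X = L_2(0,L)$ and $(0,L)$ is bounded, $q = -c \in L_2(0,L) \subset L_1(0,L)$, so the standing hypothesis $q \in L_1(0,L)$ of the Sturm--Liouville framework is met. Lemma~\ref{lem:Restatement of the domain} already identifies $D(A)$ with the domain \eqref{eq:SaS-theorem} for the boundary forms $U_1(y) = y(0)$ and $U_2(y) = y(L)$, so $A$ and $-SL$ coincide as operators (same action, same domain).

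Next I would verify that these Dirichlet boundary conditions are Birkhoff-regular, i.e. that one of the conditions (i)--(iii) of Theorem~\ref{thm:Theorem-1.5-SaS} holds. Reading off the coefficients of $U_1, U_2$ gives the coefficient matrix
\[
\begin{pmatrix} a_{11} & a_{12} & b_{11} & b_{12} \\ a_{21} & a_{22} & b_{21} & b_{22} \end{pmatrix}
=
\begin{pmatrix} 1 & 0 & 0 & 0 \\ 0 & 0 & 1 & 0 \end{pmatrix}.
\]
Since the second and fourth columns vanish, the minors $J_{42}, J_{14}, J_{32}, J_{12}, J_{34}$ are all zero (each involves a zero column), so conditions (i) and (ii) fail while the first three equalities of (iii) hold, together with $J_{12} + J_{34} = 0$. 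It then remains to note that $J_{13} = \det\begin{pmatrix} 1 & 0 \\ 0 & 1 \end{pmatrix} = 1 \neq 0$, so condition (iii) is satisfied. Hence Theorem~\ref{thm:Theorem-1.5-SaS} applies to $SL$ and yields that $SL$ has nonempty resolvent set and compact resolvent.

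Finally I would transfer this to $A$. Because $A = -SL$, the resolvent sets satisfy $\lambda \in \rho(A) \iff -\lambda \in \rho(SL)$, and the resolvents are related by $R_\lambda(A) = -R_{-\lambda}(SL)$; compactness of $R_{-\lambda}(SL)$ for one $-\lambda \in \rho(SL)$ therefore gives compactness of $R_\lambda(A)$ for the corresponding $\lambda$, which is exactly what Definition~\ref{def:Operator-with-a-compact-resolvent} requires. I expect the only genuinely delicate points to be bookkeeping rather than conceptual: keeping the sign convention $q = -c$ consistent throughout, confirming that the Dirichlet case indeed lands in branch (iii) rather than (i) or (ii) of the regularity test, and observing that compactness is invariant under the sign flip relating $A$ and $SL$.
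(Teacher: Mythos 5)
Your proof is correct and follows essentially the same route as the paper: both reduce the claim to Theorem~\ref{thm:Theorem-1.5-SaS} via the domain identification in Lemma~\ref{lem:Restatement of the domain}, read off the Dirichlet coefficient matrix, and conclude Birkhoff regularity from condition (iii) (your explicit computation $J_{42}=J_{14}=J_{32}=0$, $J_{12}+J_{34}=0$, $J_{13}=1\neq 0$ is exactly what the paper asserts). Your added bookkeeping for the sign convention, namely $A=-SL$ with $q=-c$ and the transfer of compactness via $R_\lambda(A)=-R_{-\lambda}(SL)$, is a point the paper's proof passes over silently, but it refines rather than changes the argument.
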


\begin{proof}
For the boundary conditions $U_1(y) = y(0)$ and $U_2(y)=y(L)$
the coefficients $a_{ij}$ and $b_{ij}$ from the formulation of Theorem~\ref{thm:Theorem-1.5-SaS}
have the form:
$a_{11} = 1$, $b_{21} = 1$, and all other entries of a matrix in \eqref{eq:BC-coefs-matrix} are zeros.
By item (iii) of Theorem~\ref{thm:Theorem-1.5-SaS} the boundary conditions are Birkhoff-regular and the operator $A$ has a compact resolvent.
\end{proof}

\end{document}